\newtheorem{theorem}{Theorem}
\newtheorem{corollary}[theorem]{Corollary}
\newtheorem{definition}[theorem]{Definition}
\newtheorem{example}[theorem]{Example}
\newtheorem{lemma}[theorem]{Lemma}
\newtheorem{notation}[theorem]{Notation}
\newtheorem{proposition}[theorem]{Proposition}
\newtheorem{remark}[theorem]{Remark}
\newenvironment{proof}[1][Proof]{\noindent\textbf{#1.} }{\ \rule{0.5em}{0.5em}}
\begin{document}

\title{Essential Circles and Gromov-Hausdorff Convergence of Covers}
\author{Conrad Plaut \\
%EndAName
Department of Mathematics\\
University of Tennessee\\
Knoxville, TN 37996\\
cplaut@math.utk.edu \and Jay Wilkins \\
%EndAName
Department of Mathematics\\
University of Connecticut\\
196 Auditorium Road, Unit 3009\\
Storrs, CT 06269-3009\\
leonard.wilkins@uconn.edu}
\maketitle

\begin{abstract}
We give various applications of essential circles (introduced in \cite{PW})
in a compact geodesic space $X$. Essential circles completely determine the
homotopy critical spectrum of $X$, which we show is precisely $\frac{2}{3}$
the covering spectrum of Sormani-Wei. We use finite collections of essential
circles to define \textquotedblleft circle covers\textquotedblright , which
extend and contain as special cases the $\delta $-covers of Sormani and Wei
(equivalently the $\varepsilon $-covers of \cite{PW}); the constructions are metric
adaptations of those utilized by Berestovskii-Plaut in the construction of
entourage covers of uniform spaces. We show that, unlike $\delta$- and 
$\varepsilon $-covers, circle covers are in a sense closed with respect to
Gromov-Hausdorff convergence, and we prove a finiteness theorem concerning
their deck groups that does not hold for covering maps in general. This
allows us to completely understand the structure of Gromov-Hausdorff limits
of $\delta$-covers.  Also, we use essential circles to strengthen a theorem
of E. Cartan by finding a new (even for compact Riemannian manifolds) finite
set of generators of the fundamental group of a semilocally simply connected
compact geodesic space. We conjecture that there is always a generating set
of this sort having minimal cardinality among all generating sets.

Keywords: Gromov-Hausdorff convergence, essential circles, covering maps,
fundamental group, geodesic space, length spectrum
\end{abstract}

\section{Introduction}

Christina Sormani and Guofang Wei in \cite{SW1}, \cite{SW2}, \cite{SW3}, 
\cite{SW4} and Valera Berestovskii and Conrad Plaut in \cite{BPTG}, \cite%
{BPUU} studied covering space constructions that encode geometric
information and stratify the topology of the underlying space. Sormani and
Wei utilized a classical construction of Spanier (\cite{S}) that provides a
covering map $\pi ^{\delta }:\widetilde{X}^{\delta }\rightarrow X$
corresponding to the open cover of a geodesic space $X$ by open $\delta $%
-balls, which they called the $\delta $-cover of $X$. Berestovskii-Plaut
developed a new construction for covers of uniform spaces -- hence any
metric space -- that utilizes discrete chains and homotopies rather than
traditional (i.e. continuous) paths and homotopies. Building on
Berestovskii-Plaut, the special case of metric spaces was developed further
by Plaut and Wilkins in \cite{PW} (see also \cite{PWetal} and \cite{W}). For
any connected metric space $X$ and $\varepsilon >0$, the Berestovskii-Plaut construction
yields a covering map $\phi _{\varepsilon }:X_{\varepsilon }\rightarrow X$,
with deck group $\pi _{\varepsilon }(X)$ that is a kind of coarse
fundamental group at a given scale. In the geodesic case, which is the case
of interest in this paper, we show that when $\delta =\frac{3\varepsilon }{2}
$, the covering maps $\pi ^{3\varepsilon /2}$ and $\phi _{\varepsilon }$ are
naturally isometrically equivalent (Corollary \ref{sw}). Both covering
spaces have been used to obtain notable geometric and topological results
(cf. \cite{PWetal}, \cite{PW}, \cite{SW1}, \cite{SW2}, \cite{SW3}, \cite{SW4}%
, \cite{W}).

As Sormani-Wei observed in \cite{SW1}, one characteristic of the $\delta $%
-covers of a geodesic space -- hence, the $\varepsilon$-covers, also -- is that 
they are not \textquotedblleft closed\textquotedblright\ with respect to 
Gromov-Hausdorff convergence. That is, if a sequence of compact geodesic spaces 
$X_{i}$ converges to a compact space $X$ in the Gromov-Hausdorff sense, the covers 
$(X_{i})_{\varepsilon }$ may not converge to $X_{\varepsilon }$ in the pointed 
Gromov-Hausdorff sense. Even if the covers $(X_{i})_{\varepsilon }$ converge, 
their limit may not be any $\varepsilon $-cover of $X$. Sormani and Wei did show 
that there is always a subsequence $(X_{i_{k}})_{\varepsilon }$ that pointed converges
to a space $Y$, that $Y$ is covered by $X_{\varepsilon }$, and that $Y$
covers both $X$ and $X_{\varepsilon ^{\prime }}$ for all $\varepsilon
^{\prime }>\varepsilon $ (Theorem 3.6, \cite{SW1} and Proposition 7.3, \cite%
{SW2}). It follows that the $\varepsilon $-covers of a convergent sequence
of compact geodesic spaces are \textit{precompact}. In this paper we
introduce the notion of \textquotedblleft circle cover\textquotedblright ,
which extends the notion of $\varepsilon $-cover and is closed with respect
to Gromov-Hausdorff convergence. As a consequence we can fully understand
the limit space $Y$ described by Sormani-Wei. 

Discrete homotopy methods are quite amenable to questions that involve
Gromov-Hausdorff convergence. For example, Gromov-Hausdorff convergence of
compact metric spaces can be characterized by the existence of
\textquotedblleft almost isometries\textquotedblright\ that generally are
not continuous, and therefore classical methods using continuous paths are
not always easy to apply. However, for any function $f:X\rightarrow Y$
between metric spaces, there is a naturally induced function $%
f_{\#}:X_{\varepsilon }\rightarrow Y_{\delta }$ provided that $f$ only
satisfies a kind of coarse continuity: for any $x,y\in X$, if $%
d(x,y)<\varepsilon $ then $d(f(x),f(y))<\delta $ (see also Theorem 2 in \cite%
{BPUU}). In fact, we show that when $f$ is a generalized almost isometry
called a $\sigma $-isometry, $f_{\#}$ is actually a quasi-isometry (see
Remark \ref{quasi} and Theorem \ref{almstdone}) with distortion constants
explicitly controlled by $\sigma $. These tools play a significant role in
the proof of our main theorem (Theorem \ref{closed}), as these induced
quasi-isometries allow us to gain explicit control over the Gromov-Hausdorff
distance between balls in the possibly non-compact spaces $X_{\varepsilon }$
and $Y_{\delta }$. 

For simplicity, in what follows we will use the notation of \cite{PW}
concerning $\varepsilon $-covers, although the results that apply to
geodesic spaces can be equivalently stated for the $\delta $-covers of
Sormani-Wei, due to Corollaries \ref{sw} and \ref{lker}. In \cite{PW} we 
defined the notion of an \textit{essential} $\varepsilon$\textit{-circle} in a 
geodesic space (see Sections 2 and 3 for more details). Among their other 
applications, essential circles completely determine the Sormani-Wei 
\textit{Covering Spectrum }(equivalently the \textit{Homotopy Critical Spectrum} 
of \cite{PW}). Now suppose that $\varepsilon >0$ and $\mathcal{T}$ is any 
finite collection of essential $\delta $-circles such that $\delta \geq \varepsilon$. 
We define a natural normal subgroup $K_{\varepsilon }(\mathcal{T})$ of 
$\pi _{\varepsilon }(X)$ (which is the trivial group when $\mathcal{T=\varnothing }$) 
that acts freely and properly discontinuously on $X_{\varepsilon }$ with quotient $%
X_{\varepsilon }^{\mathcal{T}}$. Then there is a natural induced mapping $%
\phi _{\mathcal{\varepsilon }}^{\mathcal{T}}:X_{\mathcal{\ \varepsilon }}^{%
\mathcal{T}}\rightarrow X$ which is also a covering map with deck group
naturally isomorphic to $\pi _{\mathcal{\varepsilon }}^{\mathcal{T}}(X)=\pi
_{\varepsilon }(X)/K_{\varepsilon }(\mathcal{T)}$. We call $\phi
_{\varepsilon }^{\mathcal{T}}$ the $(\mathcal{T},\varepsilon )$-cover of $X$,
and in general we will refer to these covers as \textit{circle covers} (Definition \ref{circle cover}). 
Note that when $\mathcal{T=\varnothing }$, $\phi _{\varepsilon }^{\mathcal{T}%
}=\phi _{\varepsilon }$, so circle covers extend the notion of $\varepsilon $
-covers. In this paper, the arrow \textquotedblleft $\rightarrow $%
\textquotedblright\ indicates Gromov-Hausdorff convergence (respectively,
pointed Gromov-Hausdorff convergence) of the compact spaces $X_{i}$
(respectively, for the possibly non-compact covering spaces). 

\begin{theorem}
\label{closed}Suppose that $X_{i}\rightarrow X$, where each $X_{i}$ is
compact geodesic, and for each $i$ there are an $\varepsilon _{i}>0$ and a
finite collection $\mathcal{T}_{i}$ of essential $\tau $-circles in $X_{i}$
such that $\tau \geq \varepsilon _{i}$. If $\{\varepsilon _{i}\}$ has a
positive lower bound then for any positive $\varepsilon \leq \lim \inf
\varepsilon _{i}$ there exist a subsequence $\{X_{i_{k}}\}$ and a finite
collection $\mathcal{T}$ of essential $\tau $-circles in $X$ with $\tau \geq
\varepsilon $, such that $(X_{i_{k}})_{\varepsilon _{i_{k}}}^{\mathcal{T}%
_{i_{k}}}\rightarrow X_{\varepsilon }^{\mathcal{T}}$ and $\pi _{\varepsilon
_{i_{k}}}^{\mathcal{T}_{i_{k}}}(X_{i_{k}})$ is isomorphic to $\pi
_{\varepsilon }^{\mathcal{T}}(X)$ for all large $k$.
\end{theorem}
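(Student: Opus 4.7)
The plan is to combine three ingredients: the precompactness of $\varepsilon$-covers established by Sormani–Wei, the induced quasi-isometries of $\varepsilon$-covers coming from almost isometries (Theorem \ref{almstdone} and Remark \ref{quasi}), and the presentation of the circle cover $X_{\varepsilon}^{\mathcal{T}}$ as the quotient $X_{\varepsilon}/K_{\varepsilon}(\mathcal{T})$. Since $\liminf \varepsilon_i \geq \varepsilon > 0$, after discarding finitely many terms we may assume every $\varepsilon_i$ exceeds some $\varepsilon_0 > 0$, so that all covers $(X_i)_{\varepsilon_i}$ and $X_{\varepsilon}$ are defined on a common range of scales. Choosing $\sigma_i$-isometries $f_i\colon X_i\to X$ with $\sigma_i\to 0$ that witness $X_i\to X$, one obtains induced $f_{i,\#}\colon (X_i)_{\varepsilon_i}\to X_{\varepsilon}$ that are quasi-isometries with distortion controlled explicitly by $\sigma_i$.

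Next I would pass to a first subsequence so that $(X_{i_k})_{\varepsilon_{i_k}}$ converges in pointed Gromov–Hausdorff topology to some space $Y$ (by the Sormani–Wei precompactness result quoted in the introduction), and along the way arrange that $\varepsilon_{i_k}\to \varepsilon'\geq\varepsilon$. To handle the finite collections $\mathcal{T}_i$, I would pass to a further subsequence where $|\mathcal{T}_i|=N$ is constant and where, for each slot $j\leq N$, the scale $\tau_{i,j}$ of the $j$-th essential circle converges to some $\tau_j\geq \varepsilon$. Each essential $\tau_{i,j}$-circle is represented by a closed $\tau_{i,j}$-chain of uniformly bounded cardinality (since $\tau_{i,j}\geq\varepsilon_0$ and the $X_i$ are uniformly totally bounded). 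Transferring these chains to $X$ via $f_i$ and applying a diagonal Arzelà–Ascoli-type extraction yields $N$ limit closed chains in $X$; the content of this step is to verify that each limit chain is again an essential $\tau_j$-circle, which reduces to showing that essentialness at a given scale is a combinatorial, $\sigma$-stable property of the chain class in $X_{\tau_j}$.

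With the limit collection $\mathcal{T}$ in hand, I would then verify that $f_{i_k,\#}$ sends the normal subgroup $K_{\varepsilon_{i_k}}(\mathcal{T}_{i_k})$ onto $K_{\varepsilon}(\mathcal{T})$ for all large $k$. Because each $K$ is generated (as a normal subgroup) by the classes of the finitely many essential circles, and because the essential-circle classes are preserved under $f_{i_k,\#}$ by the previous paragraph, this becomes a finite bookkeeping check once the generator correspondence is established. Quotienting the quasi-isometries $f_{i_k,\#}$ by these matching subgroups produces quasi-isometries $(X_{i_k})_{\varepsilon_{i_k}}^{\mathcal{T}_{i_k}}\to X_{\varepsilon}^{\mathcal{T}}$ whose distortion on balls of any fixed radius tends to zero, which upgrades to pointed Gromov–Hausdorff convergence; the deck-group isomorphism $\pi_{\varepsilon_{i_k}}^{\mathcal{T}_{i_k}}(X_{i_k})\cong \pi_{\varepsilon}^{\mathcal{T}}(X)$ then falls out of the bijective generator-and-relation correspondence.

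The main obstacle I anticipate is the combinatorial step of matching $K_{\varepsilon_{i_k}}(\mathcal{T}_{i_k})$ with $K_{\varepsilon}(\mathcal{T})$: one must rule out that, in the limit, new homotopy relations appear (which would shrink $K$) or that some $\mathcal{T}_{i_k}$-relation disappears (which would enlarge it). Both must be controlled by balancing $\sigma_i$ against the fixed scale gap between $\varepsilon_i$ and $\tau_{i,j}$, using that $\tau_{i,j}-\varepsilon_i$ is bounded below on the subsequence. The finiteness of $\mathcal{T}_i$ and the uniform lower bound on $\varepsilon_i$ should make these constants uniform, so the argument is structurally clean, but pinning down the equality $f_{i_k,\#}(K_{\varepsilon_{i_k}}(\mathcal{T}_{i_k}))=K_{\varepsilon}(\mathcal{T})$ rather than a mere inclusion is the delicate point.
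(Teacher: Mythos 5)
Your outline captures several genuine ingredients of the argument (induced quasi-isometries $f_{\#}$, limits of triads, equivariant quotient convergence), but it has a gap that is fatal as stated: you take the limit collection $\mathcal{T}$ to consist only of limits of the circles in $\mathcal{T}_{i}$. The theorem is false with that choice of $\mathcal{T}$. Consider Example \ref{torus ex} with $\varepsilon_{i}=\varepsilon=\frac{1}{3}$ and $\mathcal{T}_{i}=\varnothing$: then $(X_{i})_{\varepsilon_{i}}^{\mathcal{T}_{i}}=(X_{i})_{1/3}$ is a cylinder for every $i$, the cylinders converge to a cylinder, but your recipe gives $\mathcal{T}=\varnothing$ and hence the target $X_{1/3}^{\varnothing}=X_{1/3}=\mathbb{R}^{2}$. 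The point of the theorem is precisely that $\mathcal{T}$ must also absorb the limits of the essential circles of $X_{i}$ at scales $\tau$ with $\delta\leq\tau<\varepsilon_{i}$ (here, the short circle of the torus $T_{i}$, whose limit is one of the two essential $\frac{1}{3}$-circles of $X$). The paper's mechanism for this is Proposition \ref{T}: one first rewrites $(X_{i})_{\varepsilon_{i}}^{\mathcal{T}_{i}}$ isometrically as $(X_{i})_{\delta}^{\mathcal{S}_{i}}$ at a single common scale $\delta<\varepsilon$, where $\mathcal{S}_{i}$ is $\mathcal{T}_{i}$ augmented by one representative of every essential triad of $X_{i}$ at scale in $[\delta,\varepsilon_{i})$; the uniform bound on the number of critical values $\geq\varepsilon$ in a precompact family (Theorem 11 of \cite{PW}) keeps $|\mathcal{S}_{i}|$ bounded, and $\mathcal{T}$ is then the limit of $\mathcal{S}_{i}$, not of $\mathcal{T}_{i}$.

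A second, related problem is that your maps $f_{i,\#}\colon (X_{i})_{\varepsilon_{i}}\rightarrow X_{\varepsilon}$ are not defined: the induced map $f_{\#}$ goes from a strictly smaller scale to a strictly larger one (one needs roughly $\varepsilon_{i}+\sigma_{i}<\varepsilon$), whereas here $\varepsilon_{i}\geq\varepsilon$ up to vanishing error. The reduction to the common scale $\delta$, chosen below $\varepsilon$ but above the largest critical value of $X$ less than $\varepsilon$ so that $\phi_{\varepsilon\delta}$ is an isometry, is what makes the maps $(f_{i})_{\#}\colon (X_{i})_{\delta}\rightarrow X_{\varepsilon}$ (and then $g_{i}=\phi_{\varepsilon\delta}^{-1}\circ(f_{i})_{\#}$) legitimate and simultaneously fixes the group comparison $\pi_{\delta}(X_{i})\cong\pi_{\varepsilon}(X)$. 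Once these two repairs are made, the rest of your plan (convergent subsequences of fixed-cardinality refinements, stability of essentialness via Proposition \ref{close} and the injectivity of $(f_{i})_{\theta}$, matching of the normal subgroups on generators, and Propositions \ref{ghpoly} and \ref{equiclose} for the quotients) does track the paper's proof. Also note that the initial appeal to Sormani--Wei precompactness to extract a limit $Y$ is unnecessary: the paper identifies the limit directly.
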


Note that without a positive lower bound on the size of the essential
circles the situation is more complicated; in general, there may be no
convergent subsequence of covers at all, and even if there is, it may not be
a cover of the limiting space $X$ (see Example \ref{zero limit ex}).  Nevertheless,
Theorem \ref{closed} is an explicit enhancement of some notable prior results.
For instance, from Theorem 3.4 and Corollary 3.5 in \cite{SW1}, one can conclude 
that for any $0 < \varepsilon < \varepsilon'$, $\pi_{\varepsilon'}(X)$ is isomorphic to 
a quotient of $\pi_{\varepsilon}(X_i)$ for sufficiently large $i$.  Theorem \ref{closed},
on the other hand, not only gives a nice geometric picture of the explicit quotient, but 
it also handles the case where $\varepsilon = \varepsilon'$

In Section 2 we recall some of the basics of discrete homotopy theory from 
\cite{BPUU} and \cite{PW}, and present examples illustrating ideas outlined
in the introduction. In Section 3 we establish some technical results
regarding quotients of metric spaces, particularly in the case of subgroups
of $\pi _{\varepsilon }(X)$ acting on $X_{\varepsilon }$. We establish in
Theorem \ref{mcov} that \textit{every} cover of a compact geodesic space $X$
is naturally a quotient of $X_{\varepsilon }$ by a subgroup of $\pi
_{\varepsilon }(X)$, for some sufficiently small $\varepsilon >0$. In the
process, we give an explicit description of the subgroup as well as
necessary and sufficient conditions for the covering map to be regular; this
strengthens Lemma 2.4 in \cite{SW2}. As a byproduct of the results in
Section 3, we use essential circles to create a new (even for Riemannian
manifolds) set of generators of the fundamental group of a compact geodesic
space (Theorem \ref{newbase}), and we conjecture that one can always find
such a generating set of minimal cardinality.

Section 4 begins by introducing the notion of $\sigma $-isometry and showing
that pointed Gromov-Hausdorff convergence is implied by the existence of
these maps. The induced maps $f_{\#}$ mentioned above play a significant
role in this section, and Proposition \ref{serious} provides the technical
machinery necessary to control the distortion of these maps in the case of
the $\varepsilon $-covers of a convergent sequence of compact spaces. The
last step is the adaptation and translation of the preceding tools into an
equivariant notion of pointed Gromov-Hausdorff convergence (Theorem \ref%
{almstdone} and Proposition \ref{equiclose}); this facilitates the still
rather technical proof of Theorem \ref{closed}. We conclude the paper with
some brief remarks and questions that naturally arise from these results. 

\section{Background and Examples}

We begin with some background on discrete homotopy theory; proofs and 
further details regarding the results in this section may be found in \cite{BPUU}
for the more general uniform case and in \cite{PW} for the special case of metric
and geodesic spaces.  In a metric space $X$, and for $\varepsilon >0$, an 
$\varepsilon $-chain is a finite sequence $\{x_{0},...,x_{n}\}$ such that for 
all $i$, $d(x_{i},x_{i+1})<\varepsilon $.  An $\varepsilon $-homotopy consists 
of a finite sequence $\left\langle \gamma _{0},...,\gamma _{n}\right\rangle$ of 
$\varepsilon $-chains, where each $\gamma _{i}$ differs from its predecessor by 
a \textquotedblleft basic move\textquotedblright : adding or removing a \textit{single} 
point, always leaving the endpoints fixed. The $\varepsilon $-homotopy equivalence class
of an $\varepsilon $-chain $\alpha $ will be denoted by $[\alpha
]_{\varepsilon }$. Fixing a basepoint $\ast $, $X_{\varepsilon }$ is defined
to be the set of all $\varepsilon $-homotopy equivalence classes of $%
\varepsilon $-chains starting at $\ast $, and $\phi _{\varepsilon
}:X_{\varepsilon }\rightarrow X$ is the endpoint map. In a connected space,
choice of basepoint is immaterial and $\phi _{\varepsilon }$ is surjective,
so we will not include the base point in our notation and will assume that
all maps are base-point preserving. In particular, we always take the base
point in $X_{\varepsilon }$ to be the equivalence class $[\ast
]_{\varepsilon }$ containing the trivial chain $\{\ast \}$.

The group $\pi _{\varepsilon }(X)$ is the subset of $X_{\varepsilon }$
consisting of equivalence classes of $\varepsilon $-loops starting and
ending at $\ast $ with operation induced by concatenation, i.e., $[\alpha
]_{\varepsilon }\ast \lbrack \beta ]_{\varepsilon }=[\alpha \ast \beta
]_{\varepsilon }$. We denote the reversal of a chain $\alpha $ by $\overline{%
\alpha }$. As expected, for $[\alpha ]_{\varepsilon }\in \pi _{\varepsilon
}(X)$, $\left( [\alpha ]_{\varepsilon }\right) ^{-1}=[\overline{\alpha }%
]_{\varepsilon }$, and the identity is $[\ast ]_{\varepsilon }$.

For any $\varepsilon $-chain $\alpha =\{x_{0},...,x_{n}\}$, we set $\nu
(\alpha ):=n$ and define its \textit{length} by 
\begin{equation}
L(\alpha ):=\sum\limits_{i=1}^{n}d(x_{i},x_{i-1}).  \notag
\end{equation}%
\noindent Defining $\left\vert [\alpha ]_{\varepsilon }\right\vert :=\inf
\{L(\gamma ):\gamma \in \lbrack \alpha ]_{\varepsilon }\}$ leads to a metric
on $X_{\varepsilon }$ given by 
\begin{equation}
d([\alpha ]_{\varepsilon },[\beta ]_{\varepsilon }):=\left\vert \left[ 
\overline{\alpha }\ast \beta \right] _{\varepsilon }\right\vert =\inf
\{L(\kappa ):\alpha \ast \kappa \ast \overline{\beta }\text{ is }\varepsilon 
\text{-null homotopic}\}  \label{mdeff}
\end{equation}%
This metric has a number of nice properties that we will need. For example, $%
\pi _{\varepsilon }(X)$ acts on $X_{\varepsilon }$ as isometries via the map
induced by preconcatenation by an $\varepsilon $-loop. Additionally, the
endpoint map $\phi _{\varepsilon }:X_{\varepsilon }\rightarrow X$ (which is
well-defined since $\varepsilon $-homotopies preserve endpoints), is a
uniform local isometry and, provided $X$ is connected, a regular covering
map with deck group naturally identified with $\pi _{\varepsilon }(X)$. When 
$X$ happens to be a geodesic space (which will soon be our underlying
assumption) then so is $X_{\varepsilon }$, and in fact the above metric
coincides with the traditional lifted geodesic metric on the covering space $%
X_{\varepsilon }$ (Proposition 23, \cite{PW}). The definition using (\ref%
{mdeff}) is very useful for our purposes, but since we will need the lifted
geodesic metric for arbitrary covering spaces, we will recall the definition
now. Given a covering map $\phi :X\rightarrow Y$, where $Y$ is a geodesic
space and $X$ is connected, the lifted geodesic metric on $X$ is defined by $%
d(x,y)=\inf \{L(\phi \circ c):c$ is a path joining $x$ and $y\}$. As pointed
out in \cite{PW}, geodesic metrics are uniquely determined by their local
values, and in particular the lifted geodesic metric is uniquely determined
by the fact that $\phi $ is a local isometry.

There is a mapping from fixed-endpoint homotopy classes of continuous paths
to $\varepsilon $-homotopy classes of $\varepsilon $-chains defined as
follows: For any continuous path $c:[0,1]\rightarrow X$, choose $%
0=t_{0}<\cdot \cdot \cdot <t_{n}=1$ fine enough that every image $%
c([t_{i},t_{i+1}])$ is contained in the open ball $B(c(t_{i}),\varepsilon )$%
. Then the chain $\{c(t_{0}),...,c(t_{n})\}$ is called a subdivision $%
\varepsilon $-chain of $c$. Setting $\Lambda([c]):=[c(t_{0}),...,c(t_{n})]_{%
\varepsilon }$ produces a well-defined function that is length
non-increasing in the sense that $\left\vert \Lambda ([c])\right\vert \leq
\left\vert \lbrack c]\right\vert :=\inf \{L(d):d\in \lbrack c]\}$.
Restricting $\Lambda $ to the fundamental group at any base point yields a
homomorphism $\pi _{1}(X)\rightarrow \pi _{\varepsilon }(X)$ that we will
also refer to as $\Lambda $. When $X$ is geodesic, $\Lambda $ is surjective
since the successive points of an $\varepsilon $-loop $\lambda$ may be
joined by geodesics to obtain a path loop whose class goes to $[\alpha
]_{\varepsilon }$. The kernel of $\Lambda $ is precisely described by
Corollary \ref{lker}. Variations of $\Lambda $ and their applications to the
fundamental group and universal covers are further examined by the second
author in \cite{W}.

A partial inverse operation to $\Lambda $ is given by the following notion:
Let $\alpha :=\{x_{0},...,x_{n}\}$ be an $\varepsilon $-chain in a metric
space $X$, where $\varepsilon >0$. A \textit{stringing} of $\alpha $
consists of a path $\widehat{\alpha }$ formed by concatenating paths $%
\gamma_{i}$ from $x_{i}$ to $x_{i+1}$ where each path $\gamma_{i}$ lies
entirely in $B(x_{i},\varepsilon )$. If each $\gamma _{i}$ is a geodesic
then we call $\widehat{\alpha }$ a \textit{chording }of $\alpha $. Note that
by \textquotedblleft geodesic\textquotedblright\ in this paper we mean an
arclength-parameterized path whose length is equal to the distance between
its endpoints, and not a locally minimizing path as is the more common
meaning in Riemannian geometry. We will need the following two basic results.

\begin{proposition}
\label{ender}If $\alpha $ is an $\varepsilon $-chain in a chain connected
metric space $X$ then the unique lift of any stringing $\widehat{\alpha }$
starting at the basepoint $[\ast ]_{\varepsilon }$ in $X_{\varepsilon }$ has 
$[\alpha ]_{\varepsilon }$ as its endpoint.
\end{proposition}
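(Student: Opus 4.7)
The plan is to proceed by induction on $n := \nu(\alpha)$. The base case $n=0$ is trivial: $\alpha = \{\ast\}$, the only stringing is the constant path at $\ast$, and its lift is the constant path at $[\ast]_\varepsilon$, whose endpoint is $[\alpha]_\varepsilon = [\ast]_\varepsilon$.

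For the inductive step, write $\alpha = \alpha' \cup \{x_n\}$ where $\alpha' = \{x_0,\ldots,x_{n-1}\}$ and $\widehat{\alpha} = \widehat{\alpha'} \ast \gamma_{n-1}$ with $\gamma_{n-1}$ a path from $x_{n-1}$ to $x_n$ lying in $B(x_{n-1},\varepsilon)$. By the inductive hypothesis, the unique lift of $\widehat{\alpha'}$ starting at $[\ast]_\varepsilon$ ends at $[\alpha']_\varepsilon$. By uniqueness of path lifting, the lift of $\widehat{\alpha}$ is obtained by concatenating that lift with the unique lift of $\gamma_{n-1}$ beginning at $[\alpha']_\varepsilon$. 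The problem therefore reduces to identifying the endpoint of this second lift.

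The key step is to build an explicit candidate for the lift. Define $\widetilde{\gamma}_{n-1}(t) := [\alpha' \ast \{\gamma_{n-1}(t)\}]_\varepsilon$, which is well-defined because $d(x_{n-1},\gamma_{n-1}(t)) < \varepsilon$ for every $t$, so $\alpha' \ast \{\gamma_{n-1}(t)\}$ is a genuine $\varepsilon$-chain. I would check three things: first, $\widetilde{\gamma}_{n-1}(0) = [\alpha' \ast \{x_{n-1}\}]_\varepsilon = [\alpha']_\varepsilon$, since removing the repeated terminal point is a basic move; second, $\phi_\varepsilon \circ \widetilde{\gamma}_{n-1} = \gamma_{n-1}$ by definition of the endpoint map; third, $\widetilde{\gamma}_{n-1}$ is continuous, which follows from the local-isometry property of $\phi_\varepsilon$. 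Concretely, for $s,t$ close the chain $\overline{\alpha' \ast \{\gamma_{n-1}(s)\}} \ast (\alpha' \ast \{\gamma_{n-1}(t)\})$ is $\varepsilon$-homotopic (via cancellation of $\alpha' \ast \overline{\alpha'}$) to a short chain near $\gamma_{n-1}(s)$, so the metric distance in $X_\varepsilon$ between $\widetilde{\gamma}_{n-1}(s)$ and $\widetilde{\gamma}_{n-1}(t)$ is bounded by $d(\gamma_{n-1}(s),\gamma_{n-1}(t))$.

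With these three properties in hand, $\widetilde{\gamma}_{n-1}$ is a lift of $\gamma_{n-1}$ starting at $[\alpha']_\varepsilon$, and by uniqueness of lifts of paths through a covering map it must coincide with the lift produced by the covering. Evaluating at $t=1$ gives endpoint $[\alpha' \ast \{x_n\}]_\varepsilon = [\alpha]_\varepsilon$, completing the induction. The only real subtlety is the continuity step, i.e.\ verifying that extending a fixed chain by a variable last point defines a continuous map $B(x_{n-1},\varepsilon) \to X_\varepsilon$; this is essentially the content of the local-isometry behavior of $\phi_\varepsilon$ on $\varepsilon$-balls, which the paper has already recorded, so I do not anticipate a genuine obstacle beyond carefully invoking it.
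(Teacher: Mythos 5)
Your proof is correct, and it is essentially the standard argument (the paper itself states Proposition \ref{ender} without proof, deferring to \cite{BPUU} and \cite{PW}, where the lift of each segment is likewise produced explicitly as $t\mapsto[\alpha'\ast\{\gamma(t)\}]_{\varepsilon}$ and identified by uniqueness of lifts). The one point worth making explicit is that unique path lifting is available here because $\phi_{\varepsilon}$ is injective on $\frac{\varepsilon}{2}$-balls even when the chain connected space $X$ is not assumed connected, so the appeal to uniqueness is justified in the stated generality.
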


\begin{corollary}
\label{homimp}If $\alpha $ and $\beta $ are $\varepsilon $-chains in a chain
connected metric space $X$ such that there exist stringings $\widehat{\alpha 
}$ and $\widehat{\beta }$ that are path homotopic then $\alpha $ and $\beta $
are $\varepsilon $-homotopic.
\end{corollary}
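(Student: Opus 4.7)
The plan is to reduce the claim to the unique homotopy-lifting property of the map $\phi_\varepsilon\colon X_\varepsilon \to X$ and then invoke Proposition \ref{ender}. Because $\widehat{\alpha}$ and $\widehat{\beta}$ are path homotopic with fixed endpoints, they share a common initial point $p$ and a common terminal point, and hence so do $\alpha$ and $\beta$. Since the construction of $X_\varepsilon$ is insensitive to the choice of basepoint in a chain-connected space, I would first take the basepoint $\ast = p$, so that both stringings begin at $\ast$.

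Proposition \ref{ender}, applied to $\alpha$ and to $\beta$, then identifies the terminal points of the relevant lifts: the unique lift of $\widehat{\alpha}$ in $X_\varepsilon$ starting at $[\ast]_\varepsilon$ ends at $[\alpha]_\varepsilon$, and likewise the lift of $\widehat{\beta}$ ends at $[\beta]_\varepsilon$. Because $\phi_\varepsilon$ is a uniform local isometry, hence in particular a local homeomorphism with the unique path-lifting property, fixed-endpoint homotopies in the base lift to fixed-endpoint homotopies in $X_\varepsilon$; so lifts of path-homotopic paths starting at a common point terminate at a common point. Applying this to $\widehat{\alpha}$ and $\widehat{\beta}$ forces $[\alpha]_\varepsilon = [\beta]_\varepsilon$, which is precisely the assertion that $\alpha$ and $\beta$ are $\varepsilon$-homotopic.

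The only mild technical issue is that the excerpt asserts the covering-map property of $\phi_\varepsilon$ under the assumption that $X$ is connected, while the corollary presumes only chain connectedness. This is easily sidestepped: the stringings are continuous paths, so their images lie in a single path component of $X$, and the lifting machinery can be applied inside that component. Beyond that, the argument is a clean application of covering-space monodromy together with the endpoint identification supplied by Proposition \ref{ender}, and I do not anticipate substantive obstacles.
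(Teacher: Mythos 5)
Your argument is correct and is exactly the intended one: the paper quotes this corollary as background from \cite{PW} without reproducing its proof, and the standard proof is precisely your combination of homotopy lifting (monodromy) for the uniform local isometry $\phi_{\varepsilon}$ with the endpoint identification supplied by Proposition \ref{ender}. Your bookkeeping steps --- rebasing at the common initial point and noting that a fixed-endpoint path homotopy of the stringings forces $\alpha$ and $\beta$ to share endpoints --- are the right ones.
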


We also need some basic technical results. The first of these quantifies the
idea that \textquotedblleft uniformly close\textquotedblright\ $\varepsilon $%
-chains are $\varepsilon $-homotopic. Of course \textquotedblleft
close\textquotedblright\ depends on $\varepsilon $. Given $\alpha
=\{x_{0},...,x_{n}\}$ and $\beta =\{y_{0},...,y_{n}\}$ with $x_{i},y_{i}\in
X $, define $\Delta (\alpha ,\beta ):=\underset{i}{\max }\{d(x_{i},y_{i})\}$%
. For any $\varepsilon >0$, if $\alpha $ is an $\varepsilon $-chain we
define $E_{\varepsilon }(\alpha ):=\underset{i}{\min }\{%
\varepsilon-d(x_{i},x_{i+1})\}>0$. When no confusion will result we will
eliminate the $\varepsilon $ subscript.

\begin{proposition}
\label{close}Let $X$ be a metric space and $\varepsilon >0$. If $%
\alpha=\{x_{0},...,x_{n}\}$ is an $\varepsilon $-chain and $%
\beta=\{x_{0}=y_{0},...,y_{n}=x_{n}\}$ is such that $\Delta (\alpha ,\beta
)< \frac{E(\alpha )}{2}$ then $\beta $ is an $\varepsilon $-chain that is $%
\varepsilon $-homotopic to $\alpha $.
\end{proposition}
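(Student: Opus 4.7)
The plan has two parts: (a) verify that $\beta$ is an $\varepsilon$-chain, and (b) construct an explicit $\varepsilon$-homotopy from $\alpha$ to $\beta$ by swapping one index at a time. Both parts reduce to straightforward triangle-inequality bookkeeping using the two bounds
\[
d(x_i, x_{i+1}) \leq \varepsilon - E(\alpha) \quad\text{and}\quad d(x_i, y_i) < \tfrac{E(\alpha)}{2},
\]
the first coming directly from the definition of $E(\alpha)$ and the second from the hypothesis $\Delta(\alpha,\beta) < E(\alpha)/2$.

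For part (a), I would simply apply the triangle inequality:
\[
d(y_i, y_{i+1}) \leq d(y_i, x_i) + d(x_i, x_{i+1}) + d(x_{i+1}, y_{i+1}) < \tfrac{E(\alpha)}{2} + (\varepsilon - E(\alpha)) + \tfrac{E(\alpha)}{2} = \varepsilon,
\]
establishing that $\beta$ is an $\varepsilon$-chain.

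For part (b), I would interpolate by setting $\gamma_0 := \alpha$ and, for $1 \leq k \leq n-1$,
\[
\gamma_k := \{x_0, y_1, \ldots, y_k, x_{k+1}, \ldots, x_n\},
\]
so that $\gamma_{n-1} = \beta$ (the endpoints need no swapping since $y_0 = x_0$ and $y_n = x_n$). To pass from $\gamma_{k-1}$ to $\gamma_k$, I would use two basic moves: first insert $y_k$ between $x_k$ and $x_{k+1}$, then remove $x_k$. The insertion is valid because $d(x_k, y_k) < E(\alpha)/2 < \varepsilon$ and $d(y_k, x_{k+1}) \leq d(y_k, x_k) + d(x_k, x_{k+1}) < E(\alpha)/2 + (\varepsilon - E(\alpha)) < \varepsilon$; the subsequent removal of $x_k$ is valid because the neighboring term on the left (either $y_{k-1}$, or $x_0$ when $k=1$) satisfies $d(\cdot, y_k) < \varepsilon$ by the same estimate already used in part (a). Chaining together these $2(n-1)$ basic moves yields the desired $\varepsilon$-homotopy from $\alpha$ to $\beta$.

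There is no real geometric obstacle here; the statement is a quantitative bookkeeping lemma making precise how much perturbation an $\varepsilon$-chain can absorb while remaining in its $\varepsilon$-homotopy class. The only care required is keeping the indexing of the interpolating chains straight and checking that at every intermediate stage both the inserted and the removed points respect the strict inequality $d(\cdot,\cdot) < \varepsilon$, which is why the slack $E(\alpha)/2$ (rather than $E(\alpha)$) is exactly the right threshold.
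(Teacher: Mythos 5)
Your proof is correct and is essentially the standard argument for this lemma (the paper itself states it without proof, citing \cite{BPUU} and \cite{PW}, where the same point-by-point interpolation via an insert-then-delete pair of basic moves is used). The triangle-inequality estimates $d(y_i,y_{i+1})<\varepsilon$, $d(x_k,y_k)<\varepsilon$, and $d(y_k,x_{k+1})<\varepsilon$ are exactly the right checks, and the endpoints are correctly left untouched since $y_0=x_0$ and $y_n=x_n$.
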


The reader will likely have noticed that the previous proposition requires
that the chains in question have the same number of points. The next lemma
shows that this is not really an issue. It is useful in many ways--for
example to find \textquotedblleft convergent subsequences\textquotedblright
of classes of chains, much like a discrete version of Ascoli's Theorem.

\begin{lemma}
\label{lestl}Let $L,\varepsilon >0$ and $\alpha $ be an $\varepsilon $-chain
in a metric space $X$ with $L(\alpha )\leq L$. Then there is some $%
\alpha^{\prime }\in \lbrack \alpha ]_{\varepsilon }$ such that $L(\alpha
^{\prime})\leq L(\alpha )$ and $\nu (\alpha ^{\prime })=\left\lfloor \frac{2L%
}{\varepsilon }+1\right\rfloor $.
\end{lemma}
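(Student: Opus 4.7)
The plan is to handle the two cases where $\nu(\alpha)$ is larger than, or no larger than, the target value $N := \lfloor 2L/\varepsilon + 1 \rfloor$ separately, using the two flavors of basic moves (removing a point; adding a point).

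\textbf{Case 1: $\nu(\alpha) = n > N$.} I would write $\alpha = \{x_0, \ldots, x_n\}$, let $d_i := d(x_{i-1}, x_i)$ for $1 \leq i \leq n$, and look at the $n-1$ consecutive pair-sums $s_i := d_i + d_{i+1}$ for $1 \leq i \leq n-1$. Each interior edge length $d_j$ appears in exactly two of the $s_i$, and $d_1, d_n$ appear once, so
\[
\sum_{i=1}^{n-1} s_i \;=\; 2L(\alpha) - d_1 - d_n \;\leq\; 2L(\alpha) \;\leq\; 2L.
\]
The assumption $n > N \geq 2L/\varepsilon$ (since $N+1 > 2L/\varepsilon + 1$) gives $n-1 > 2L/\varepsilon$, and pigeonhole forces some $s_i < \varepsilon$. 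By the triangle inequality $d(x_{i-1}, x_{i+1}) < \varepsilon$, so removing the interior point $x_i$ is a valid basic move; the resulting chain $\alpha''$ is $\varepsilon$-homotopic to $\alpha$, satisfies $\nu(\alpha'') = n-1$, and has $L(\alpha'') \leq L(\alpha) \leq L$ (again by triangle inequality). Iterating this reduction strictly decreases $\nu$ while preserving the bound on length, so after $n - N$ steps I arrive at a chain $\alpha' \in [\alpha]_\varepsilon$ with $\nu(\alpha') = N$ and $L(\alpha') \leq L(\alpha)$.

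\textbf{Case 2: $\nu(\alpha) \leq N$.} Here I simply insert duplicate points. Inserting a copy of $x_i$ immediately after $x_i$ in the chain is a basic move because $d(x_i, x_i) = 0 < \varepsilon$, it leaves the endpoints fixed, it increments $\nu$ by exactly one, and it does not change the length. Repeating until the count reaches $N$ gives the desired $\alpha'$.

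The content of the lemma is entirely in the pigeonhole estimate of Case 1 — everything else (the inequality chain showing $n-1 > 2L/\varepsilon$, the triangle-inequality bookkeeping for $L(\alpha'')$, and the trivial insertion move in Case 2) is routine. No result beyond the definition of basic moves and the triangle inequality is required, so I expect no serious obstacle.
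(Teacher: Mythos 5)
The paper does not prove this lemma itself (it defers to \cite{PW}), but your argument is correct and is essentially the standard one: the overlapping pair-sums $s_i$ with the pigeonhole bound $\sum s_i\leq 2L$ correctly produce a removable interior point whenever $\nu>\left\lfloor 2L/\varepsilon+1\right\rfloor$, and padding with duplicate points handles the other direction. All the bookkeeping (the inequality $N>2L/\varepsilon$, the triangle-inequality control of length under removal, and the preservation of endpoints) checks out.
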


For any $\delta \geq \varepsilon >0$, every $\varepsilon $-chain
(respectively $\varepsilon $-homotopy) is also a $\delta $-chain
(respectively $\delta $-homotopy) and there is a well-defined mapping $%
\phi_{\delta \varepsilon }:X_{\varepsilon }\rightarrow X_{\delta }$ given by 
$\phi _{\delta \varepsilon }([\alpha ]_{\varepsilon })=[\alpha ]_{\delta }$.
When $X$ is geodesic, this mapping is also a regular covering map and local
isometry, though for non-geodesic metric spaces it may not be surjective.
Restricting the map $\phi_{\delta \varepsilon}$ to the group $%
\pi_{\delta}(X) $ induces a homomorphism $\theta_{\delta
\varepsilon}:\pi_{\varepsilon}(X) \rightarrow \pi_{\delta}(X)$, which is
injective (respectively, surjective) if and only if $\phi_{\delta
\varepsilon}$ is. Thus, for a geodesic space $X$, one obtains parameterized
collections of covering spaces $\{X_{\varepsilon}\}_{\varepsilon > 0}$ and
their corresponding deck groups $\{\pi_{\varepsilon}(X)\}_{\varepsilon > 0}$%
, which actually form inverse systems (see \cite{BPUU}) via the surjective
bonding maps/homomorphisms $\phi_{\delta \varepsilon}$ and $\theta_{\delta
\varepsilon}$, respectively.

A number $\varepsilon >0$ is called a \textit{homotopy critical value }for $%
X $ if there is an $\varepsilon $-loop $\alpha $ based at $\ast $ such that $%
\alpha $ is not \textit{$\varepsilon $-null} (i.e. $\varepsilon $-homotopic
to the trivial chain) but is $\delta $-null for all $\delta >\varepsilon $.
We have the following essential connection between homotopy critical values
and $\varepsilon $-covers:

\begin{lemma}
\label{nocrit}If $X$ is a geodesic space then the covering map $%
\phi_{\varepsilon \delta }:X_{\delta }\rightarrow X_{\varepsilon }$ is
injective if and only if there are no homotopy critical values $\sigma $
with $\delta \leq \sigma <\varepsilon $.
\end{lemma}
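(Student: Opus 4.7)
The plan is to reformulate injectivity of $\phi_{\varepsilon\delta}:X_\delta\to X_\varepsilon$ as a statement about loops. Since $\phi_{\varepsilon\delta}$ is a regular covering map between connected covers of $X$, injectivity is equivalent to triviality of $\ker\theta_{\varepsilon\delta}$, i.e., every $\delta$-loop at the basepoint that is $\varepsilon$-null must already be $\delta$-null. Both directions of the biconditional then reduce to comparing, for a given loop, the infimum of scales at which it becomes null-homotopic with the interval $[\delta,\varepsilon)$.

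For the forward direction (contrapositive), assume a homotopy critical value $\sigma$ with $\delta\le\sigma<\varepsilon$, witnessed by a $\sigma$-loop $\alpha$ that is not $\sigma$-null but is $\tau$-null for every $\tau>\sigma$. Since $X$ is geodesic, I refine $\alpha$ by inserting extra points along geodesic segments between consecutive entries to produce a $\delta$-chain $\alpha'$. Each insertion on such a segment is a basic move in a $\sigma$-homotopy, because the two new distances sum to the original one (which was $<\sigma$), so $\alpha\sim_\sigma\alpha'$ and therefore also $\alpha\sim_\varepsilon\alpha'$. Then $\alpha'$ is $\varepsilon$-null (as $\alpha$ is $\tau$-null for any $\tau>\sigma$, in particular for $\varepsilon$), but $\alpha'$ cannot be $\delta$-null: a $\delta$-nullhomotopy would also be a $\sigma$-nullhomotopy (since $\sigma\ge\delta$), forcing $\alpha$ to be $\sigma$-null via $\alpha\sim_\sigma\alpha'$, contradicting the choice of $\alpha$. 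Thus $\phi_{\varepsilon\delta}$ fails to be injective.

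For the converse, suppose some $\delta$-loop $\beta$ is $\varepsilon$-null but not $\delta$-null, and consider $S:=\{\tau\ge\delta:\beta\text{ is a }\tau\text{-loop and is }\tau\text{-null}\}$. The set $S$ is upward closed, since a $\tau$-homotopy is automatically a $\tau'$-homotopy for any $\tau'\ge\tau$. It is also open in $[\delta,\infty)$: a given $\tau$-nullhomotopy is a finite collection of chains with strict inequalities $<\tau$, so by finiteness there exists $\eta>0$ making it a $(\tau-\eta)$-nullhomotopy. Hence $S=(\sigma,\infty)$ for $\sigma:=\inf S$, and the hypotheses $\varepsilon\in S$, $\delta\notin S$ give $\sigma\in[\delta,\varepsilon)$. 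Then $\beta$ is a $\sigma$-loop, is $\tau$-null for all $\tau>\sigma$, and is not $\sigma$-null (otherwise openness of $S$ at $\sigma$ would contradict $\sigma=\inf S$), so $\sigma$ is a homotopy critical value in $[\delta,\varepsilon)$.

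The only delicate step is the geodesic subdivision in the forward direction; one must check that each single-point insertion on a geodesic between two entries of a $\sigma$-chain is itself a legitimate basic move (which it is, because both new distances lie strictly below the original distance, hence below $\sigma$), and then that passing from $\alpha$ to its refinement $\alpha'$ preserves both $\varepsilon$-triviality and non-$\delta$-triviality. The openness/upward-closedness of $S$ in the converse is essentially bookkeeping once one recalls that $\varepsilon$-homotopies are finite combinatorial objects with strict inequalities.
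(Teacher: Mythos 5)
Your argument is correct. This lemma is stated in the paper only as background, with the proof deferred to \cite{PW} and \cite{BPUU}, so there is no in-paper proof to compare against; but your two reductions are exactly the natural ones and match the standard argument. The forward direction correctly handles the fact that the witnessing $\sigma$-loop need not be a $\delta$-chain by passing to a geodesic refinement (the paper itself records that refinements are $\sigma$-homotopic to the original chain), and the converse correctly extracts a critical value from a non-$\delta$-null, $\varepsilon$-null loop by observing that the set of scales at which it is null is open and upward closed, hence of the form $(\sigma,\infty)$ with $\sigma\in[\delta,\varepsilon)$. The only hypothesis you quietly use is that $X$ is geodesic (needed for the refinement step and for surjectivity of $\phi_{\varepsilon\delta}$ in the translation to $\ker\theta_{\varepsilon\delta}$), which is exactly the hypothesis of the lemma.
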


\begin{corollary}
\label{three}If $\lambda $ is an $\varepsilon $-loop in a geodesic space $X$
of length less than $3\varepsilon $ then $\lambda $ is $\varepsilon $-null.
\end{corollary}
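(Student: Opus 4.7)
The plan is to use the geodesic structure to extract a three-point representative $\mu$ of $[\lambda]_{\varepsilon}$ by trisecting a chording of $\lambda$, and then to null-homotope $\mu$ explicitly via basic moves.

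First I would set $L := L(\lambda) < 3\varepsilon$ and let $\widehat{\lambda}\colon [0,L]\to X$ denote an arclength-parameterized chording of $\lambda$, so that $\widehat{\lambda}(0)=\widehat{\lambda}(L)=\ast$ (with $L(\widehat{\lambda})=L$ since each geodesic piece realizes the distance between consecutive points). Put $p_{1}:=\widehat{\lambda}(L/3)$, $p_{2}:=\widehat{\lambda}(2L/3)$, and $\mu:=\{\ast,p_{1},p_{2},\ast\}$. Each of the three consecutive distances $d(\ast,p_{1})$, $d(p_{1},p_{2})$, $d(p_{2},\ast)$ is at most $L/3<\varepsilon$; for $d(p_{2},\ast)$ this uses that one can travel from $p_{2}$ back to $\ast$ along the final subarc $\widehat{\lambda}|_{[2L/3,L]}$ of length $L/3$, not forward along the longer one. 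In particular $\mu$ is a legitimate $\varepsilon$-loop.

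Next I would apply Corollary \ref{homimp} to conclude $[\mu]_{\varepsilon}=[\lambda]_{\varepsilon}$. The single path $\widehat{\lambda}$ serves simultaneously as a stringing of both chains: it is by construction a stringing of $\lambda$ (each geodesic piece from $x_{i}$ to $x_{i+1}$ has length $d(x_{i},x_{i+1})<\varepsilon$ and thus lies in $B(x_{i},\varepsilon)$), and it is a stringing of $\mu$ because each of the three subarcs $\widehat{\lambda}|_{[iL/3,(i+1)L/3]}$ has length $L/3<\varepsilon$ and so lies in the $\varepsilon$-ball about its initial endpoint. Since any path is trivially path homotopic to itself, the hypotheses of Corollary \ref{homimp} are met.

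Finally I would exhibit an explicit $\varepsilon$-homotopy from $\mu$ to $\{\ast\}$. Beginning with $\{\ast,p_{1},p_{2},\ast\}$, the bound $d(p_{2},\ast)<\varepsilon$ justifies inserting $\ast$ between $p_{1}$ and $p_{2}$ to obtain $\{\ast,p_{1},\ast,p_{2},\ast\}$; then one successively removes $p_{1}$, removes $p_{2}$, and collapses the remaining string of repeated basepoints down to the trivial chain. The only genuinely substantive point in the entire argument is the distance estimate $d(p_{2},\ast)\leq L/3$, since the naive bound traveling forward along $\widehat{\lambda}$ is $2L/3$, which can exceed $\varepsilon$; this is exactly where the factor of $3$ in the hypothesis enters, and it is the only place I expect any obstacle beyond bookkeeping.
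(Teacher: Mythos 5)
Your argument is correct: trisecting a chording of $\lambda$ yields a three-point $\varepsilon$-loop $\mu$ for which that same chording is a stringing, Corollary \ref{homimp} then gives $[\mu]_{\varepsilon}=[\lambda]_{\varepsilon}$, and the final collapse of $\mu$ is legitimate because every distance you need ($d(\ast,p_{1})$, $d(p_{1},p_{2})$, $d(p_{2},\ast)$) is bounded by $L/3<\varepsilon$. The paper itself states this corollary as background imported from \cite{PW} without reproving it, and your trisection-plus-stringing route is exactly the argument the recalled machinery (Proposition \ref{ender} and Corollary \ref{homimp}) is set up to deliver, so there is nothing to flag.
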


If $\alpha =\{x_{0},..,x_{n}\}$ is a chain in a geodesic space $X$ then a
refinement of $\alpha $ consists of a chain $\beta $ formed by inserting
between each $x_{i}$ and $x_{i+1}$ some subdivision chain of a geodesic
joining $x_{i}$ and $x_{i+1}$. If $\beta $ is an $\varepsilon $-chain we
will call $\beta $ an $\varepsilon $-refinement of $\alpha $. Note that if $%
\alpha $ is an $\varepsilon $-chain, then any $\varepsilon $-refinement of $%
\alpha $ is $\varepsilon $-homotopic to $\alpha $, and, hence, any two $%
\varepsilon $-refinements of $\alpha $ are $\varepsilon $-homotopic. A
special case is a \textit{midpoint refinement}, which simply uses a midpoint
between pairs of points. Of course refinements always exist in geodesic
spaces, but not in general metric spaces. Refinements are an important tool
when working with convergence questions, since the property of being an $%
\varepsilon $-chain is an \textquotedblleft open
condition\textquotedblright\ and may not be preserved when passing from a
sequence of chains to its pointwise limit.

\begin{definition}
If $X$ is a metric space and $\varepsilon >0$, an $\varepsilon $-loop of the
form $\lambda =\alpha \ast \tau \ast \overline{\alpha }$, where $\nu (\tau)=3
$, will be called $\varepsilon $-small. Note that this notation includes the
case when $\alpha $ consists of a single point--i.e. $\lambda =\tau $.
\end{definition}

The next proposition was established in \cite{PW} with the assumption that $%
\varepsilon <\delta $, but the same proof works for $\varepsilon =\delta $,
and we will need that case in the present paper. In essence it
\textquotedblleft translates\textquotedblright\ a homotopy into a product of
small loops.

\begin{proposition}
\label{deltaimp}Let $X$ be a geodesic space and $0<\varepsilon \leq \delta $%
. Suppose $\alpha ,\beta $ are $\varepsilon $-chains and $\left\langle
\alpha =\gamma _{0},...,\gamma _{n}=\beta \right\rangle $ is a $\delta $%
-homotopy. Then $[\beta ]_{\varepsilon }=[\lambda _{1}\ast \cdot \cdot \cdot
\ast \lambda _{r}\ast \alpha \ast \lambda _{r+1}\ast \cdot \cdot \cdot \ast
\lambda _{n}]_{\varepsilon }$, where each $\lambda _{i}$ is an $\varepsilon $%
-refinement of a $\delta $-small loop.
\end{proposition}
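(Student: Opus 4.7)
The plan is to induct on $n$, the number of basic moves in the $\delta$-homotopy, but I first strengthen the statement to handle the fact that intermediate chains $\gamma_i$ need only be $\delta$-chains. Specifically, I would prove: if $\alpha$ is an $\varepsilon$-chain, $\gamma$ is a $\delta$-chain connected to $\alpha$ by a $\delta$-homotopy of length $n$, and $\gamma^{*}$ is any $\varepsilon$-refinement of $\gamma$, then $[\gamma^{*}]_{\varepsilon} = [\lambda_{n} \ast \cdots \ast \lambda_{1} \ast \alpha]_{\varepsilon}$ with each $\lambda_{i}$ an $\varepsilon$-refinement of a $\delta$-small loop. Proposition \ref{deltaimp} follows by taking $\gamma = \beta$ and $\gamma^{*} = \beta$, since $\beta$ is already an $\varepsilon$-chain and so serves as its own refinement. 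The base case $n = 0$ is immediate because refinements are $\varepsilon$-homotopic to the original chain.

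For the inductive step, focus on the last basic move $\gamma_{n-1} \to \gamma_{n}$. By symmetry I may assume it inserts a point $w$ between $z_{k}$ and $z_{k+1}$ in $\gamma_{n-1} = \{z_{0}, \ldots, z_{m}\}$. Using the geodesic structure of $X$, fix $\varepsilon$-fine subdivision chains $S_{j,j+1}$ along geodesics from $z_{j}$ to $z_{j+1}$ for each $j \neq k$, a subdivision $S_{k,k+1}$ from $z_{k}$ to $z_{k+1}$, and subdivisions $S_{k,w}$ and $S_{w,k+1}$ from $z_{k}$ to $w$ and $w$ to $z_{k+1}$. Define $\gamma_{n-1}^{*}$ and $\gamma_{n}^{*}$ using these subdivisions in the obvious way; in particular they share a common initial segment $\alpha'^{*}$ from $\ast$ to $z_{k}$ and a common tail from $z_{k+1}$ to $z_{m}$.

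Set $\tau^{*} := S_{k,w} \ast \{w\} \ast S_{w,k+1} \ast \overline{S_{k,k+1}}$, an $\varepsilon$-chain loop based at $z_{k}$, and let $\mu_{n} := \alpha'^{*} \ast \tau^{*} \ast \overline{\alpha'^{*}}$. Then $\mu_{n}$ is an $\varepsilon$-refinement of the $\delta$-small loop $\{z_{0}, \ldots, z_{k}\} \ast \{z_{k}, w, z_{k+1}, z_{k}\} \ast \{z_{k}, \ldots, z_{0}\}$, whose middle triangle has $\nu = 3$. I claim $[\gamma_{n}^{*}]_{\varepsilon} = [\mu_{n} \ast \gamma_{n-1}^{*}]_{\varepsilon}$: in the concatenation $\alpha'^{*} \ast \tau^{*} \ast \overline{\alpha'^{*}} \ast \alpha'^{*} \ast (\text{tail of }\gamma_{n-1}^{*}\text{ from }z_{k})$, the pair $\overline{\alpha'^{*}} \ast \alpha'^{*}$ is a trivial backtrack loop at $z_{k}$ and reduces to a single point by iterated basic removals (each removed point's neighbors are either equal or adjacent in the refinement), and then the trailing $\overline{S_{k,k+1}}$ of $\tau^{*}$ cancels similarly with the leading $S_{k,k+1}$ in the tail, leaving precisely $\gamma_{n}^{*}$. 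Applying the inductive hypothesis to the shorter homotopy $\langle \gamma_{0}, \ldots, \gamma_{n-1}\rangle$ with refinement $\gamma_{n-1}^{*}$ then completes the induction.

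The main technical obstacle is the bookkeeping of compatible subdivisions so that $\mu_{n}$ is literally an $\varepsilon$-refinement of a $\delta$-small loop (rather than an $\varepsilon$-loop pieced together from several loops) and so that $\mu_{n} \ast \gamma_{n-1}^{*}$ differs from $\gamma_{n}^{*}$ only by backtracks amenable to Proposition \ref{close}. The removal case dualizes by using $\overline{\mu_{n}}$; equivalently one redefines $\tau^{*}$ as the reverse triangle $S_{k,k+1} \ast \overline{S_{w,k+1}} \ast \{w\} \ast \overline{S_{k,w}}$. The construction yields the formula with all $\lambda_{i}$ preceding $\alpha$, corresponding to $r = n$ in the statement, which is a valid instance of the advertised general form; one could redistribute the loops to either side of $\alpha$ by conjugating with trailing rather than leading segments.
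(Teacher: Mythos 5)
Your argument is correct and is essentially the argument the paper relies on: the paper defers the proof to \cite{PW}, where Proposition \ref{deltaimp} is established by exactly this induction on the number of basic moves, converting each insertion or removal into a conjugated $\varepsilon$-refined triangle (a $\delta$-small lollichain based at the common initial point) and cancelling the resulting backtracks. Your form with all the $\lambda_i$ on one side of $\alpha$ is a valid instance of the stated conclusion (take $r=n$), and nothing in your induction uses $\varepsilon<\delta$ strictly, so it also covers the case $\varepsilon=\delta$ that the present paper needs.
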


An $\varepsilon $\textit{-triad} in a geodesic space $X$ is a triple $%
T:=\{x_{0},x_{1},x_{2}\}$ such that $d(x_{i},x_{j})=\varepsilon $ for all $%
i\neq j$; when $\varepsilon $ is not specified we will simply refer to a
triad. We denote by $\alpha _{T}$ the loop $\{x_{0},x_{1},x_{2},x_{0}\}$. We
say that $T$ is \textit{essential} if some (equivalently any) $\varepsilon $%
-refinement of $\alpha _{T}$ is not $\varepsilon $-null. The equivalence of
\textquotedblleft some\textquotedblright\ and \textquotedblleft
any\textquotedblright\ in the preceding definition -- as well as the useful
fact that the $\varepsilon $-refinement of $\alpha _{T}$ may always be taken
to be a midpoint refinement -- follow from Proposition 37 in \cite{PW}.
Essential $\varepsilon $-triads $T_{1}$ and $T_{2}$ are defined to be 
\textit{equivalent} if some (equivalently any) $\varepsilon $-refinement of $%
\alpha _{T_{1}}$ is freely $\varepsilon $-homotopic to an $\varepsilon $%
-refinement of either $\alpha _{T_{1}}$ or $\overline{\alpha _{T_{1}}}$, and
again it suffices to consider only midpoint refinements. See \cite{PW} for
the definition of \textquotedblleft free $\varepsilon $-homotopy%
\textquotedblright , which is analogous to the classical meaning for paths.

The image of a closed loop of length $3\varepsilon $ with the property that
any $\varepsilon $-loop along this path loop is not $\varepsilon $-null is
called an \textit{essential }$\varepsilon $\textit{-circle}. If one connects
the points of an essential $\varepsilon $-triad with minimal geodesics to
form a parameterized loop, the resulting loop forms an essential $%
\varepsilon $-circle. Conversely, if one subdivides an essential $%
\varepsilon $-circle into three equal segments, then the endpoints of those
segments are an essential $\varepsilon $-triad (Proposition 37 and Corollary
41, \cite{PW}). Equivalence of the underlying triads is used to define
equivalence of essential circles. Note that equivalent essential circles may
not be freely path homotopic due to \textquotedblleft small
holes\textquotedblright\ that block traditional homotopies but not $%
\varepsilon $-homotopies. It should also be noted that while essential
circles are necessarily the images of non-null, closed geodesics that are
shortest in their respective homotopy classes, they have an even stronger
property: an essential circle is metrically embedded in the sense that its
metric as a subspace of $X$ is the same as the intrinsic metric of the
circle (Theorem 39, \cite{PW}). There are examples of closed geodesics in
compact geodesic spaces that are not essential circles (Example 44, \cite{PW}%
).

The set of homotopy critical values of a compact geodesic space make up
what is called the \textit{homotopy critical spectrum} of $X$, and these
values indicate precisely when the equivalence type of the $\varepsilon $%
-covers changes as $\varepsilon $ decreases to $0$. For example, given a
standard geodesic (i.e. Riemannian) circle of circumference $a>0$, the $%
\varepsilon $-covers are all isometries when $\varepsilon >\frac{a}{3}$, but
are the standard universal cover when $\varepsilon \leq \frac{a}{3}$; that
is, the homotopy critical spectrum of this circle is $\{\frac{a}{3}\}$. Equivalently,
any $\frac{a}{3}$-loop that traverses the circle once in either direction is not
$\frac{a}{3}$-null, but it is $\delta$-null homotopic for all $\delta > \frac{a}{3}$.
In general one may imagine $\varepsilon $ as decreasing from the diameter of
the space towards $0$, as the $\varepsilon $-covers \textquotedblleft
unravel\textquotedblright\ more of the topology of the space at each
homotopy critical value. If the space is semilocally simply connected, then
the process stabilizes with the universal cover, and the smallest homotopy
critical value is $\frac{1}{3}$ of the $1$-systole of the space (smallest
non-null closed geodesic--Corollary 43, \cite{PW}). If the space is not
semilocally simply connected, then the \textquotedblleft
unrolling\textquotedblright\ process may never end, although by using an
inverse limit one will obtain what Berestovskii-Plaut call the \textit{%
uniform universal cover} as defined in \cite{BPUU}.

Sormani and Wei defined the \textit{covering spectrum} of a compact geodesic
space to be the set of all $\delta >0$ such that $\widetilde{X}^{\delta
}\neq \widetilde{X}^{\delta ^{\prime }}$ for all $\delta ^{\prime }>\delta $%
; hence, the covering spectrum also indicates where the $\delta $-covers
change equivalence type. As was noted in the introduction, while the
homotopy critical spectrum may be defined for more general metric spaces, it
follows from the definitions and Corollary \ref{sw} below that the homotopy critical spectrum of
a compact geodesic space differs from its covering spectrum only by a
constant multiple of $\frac{2}{3}$. For compact geodesic spaces, the
homotopy critical values are discrete and bounded above in $(0,\infty )$; in
fact, if $\mathcal{X}$ is a precompact collection of compact geodesic
spaces, there is a uniform upper bound on the number of critical values in
any positive interval $[a,b]$ for any $X\in \mathcal{X}$. This was first
proved by Sormani-Wei when the spaces have universal covers (Corollary 7.7, 
\cite{SW2} -- though they prove discreteness directly without this assumption)
and later without this assumption and with a different proof by Plaut-Wilkins 
(Theorem 11, \cite{PW}).

The connection between essential circles/triads and the homotopy critical
spectrum was established by the authors in Theorem 6 of \cite{PW}: $%
\varepsilon $ is a homotopy critical value of a compact geodesic space $X$
if and only if $X$ contains an essential $\varepsilon $-circle. The \textit{%
multiplicity of a critical value $\varepsilon $} is the number of
equivalence classes of essential $\varepsilon $-circles, which is always
finite when $X$ is compact. Furthermore, Sormani and Wei showed that if
compact geodesic spaces $X_{i}$ converge to the compact space $X$, then the
covering (hence, homotopy critical) spectra of the spaces $X_{i}$ converge
in the Hausdorff sense in $\mathbb{R}$ to the spectrum of $X$ (Theorem 8.4, 
\cite{SW2}), which means that essential circles in the limit $X$ can only arise
as ``limits'' of essential circles in the spaces $X_{i}$. With this in mind, one can
see that the fact that $\varepsilon $-covers are not closed with respect to 
Gromov-Hausdorff convergence is related to the behavior of both the covering 
spaces, themselves, and the homotopy critical spectra, as the next example shows.

\begin{example}
\label{circle ex} Suppose $X_{i}$ is a geodesic circle of circumference $a-%
\frac{1}{i}$, so $X_{i}\rightarrow X$, where $X$ is the circle of
circumference $a$.  The homotopy critical spectra of $X_{i}$ and $X$, respectively,
are $\left\{ \frac{a}{3}-\frac{1}{3i}\right\}$ and $\left\{ \frac{a}{3}\right\}$. If we set 
$\varepsilon = \frac{a}{3}$, then $(X_{i})_{\varepsilon}=X$, while $X_{\varepsilon}
=\mathbb{R}$. Of course in this case $(X_{i})_{\frac{a}{3}}$ \textit{does} converge 
to an $\varepsilon$-cover of $X$, but it is not $X_{a/3}$. The covers $(X_{i})_{a/3}$
converge to $X_{\tau}$ for any $\tau >\frac{a}{3}$.  On the other hand, for
any $\varepsilon \neq \frac{a}{3}$, it does hold that $(X_{i})_{\varepsilon} \rightarrow
X_{\varepsilon}$.
\end{example}

To see how it is possible for $(X_{i})_{\varepsilon }$ to not converge to
any $\varepsilon $-cover at all, it will be helpful to illustrate the notion
of multiplicity of a critical value $\varepsilon $ -- which by definition is
the number of equivalence classes of essential $\varepsilon $-circles.
Suppose that $Y$ denotes the flat torus obtained by identifying the sides of
a rectangle of dimensions $0<3a\leq 3b$. When $a<b$ , $a$ and $b$ are
distinct homotopy critical values: $Y_{\varepsilon }=Y$ for $\varepsilon >b$%
, $Y_{\varepsilon }$ is a flat metric cylinder over a circle of length $3a$
for $a<\varepsilon \leq b$, and $Y_{\varepsilon }$ is the plane for $%
\varepsilon \leq a$. Each critical value $a$, $b$ has multiplicity $1$
because there is one class of essential circles corresponding to each value.
When $a=b$, however, the torus unrolls immediately into the plane at $%
\varepsilon =a$; that is, the $\varepsilon $-covers \textquotedblleft
skip\textquotedblright\ the cylinder. In this case, $a$ is a homotopy
critical value of multiplicity $2$, since both topological holes are of the
same size and are detected by the $\varepsilon $-covers simultaneously.

\begin{example}
\label{torus ex} Take a sequence of tori $T_{i}$ obtained from $(1-\frac{1}{i%
})\times (1+ \frac{1}{i})$-rectangles. Then each $T_{i}$ has homotopy
critical values $\frac{1}{3 }-\frac{1}{3i}$ and $\frac{1}{3}+\frac{1}{3i}$,
each with single multiplicity, while the limiting torus has a single
critical value $\frac{1}{3}$ with multiplicity $2$. In fact, $(T_{i})_{\frac{%
1}{3}}$ is a cylinder for all $i$, with limit $Y$ a cylinder of
circumference $1$, which as observed above is not an $\varepsilon $-cover of
the limiting torus.
\end{example}

What happens in Example \ref{torus ex} is that distinct homotopy critical
values merge in the limit to a single homotopy critical value with
multiplicity greater than $1$. Theorem \ref{closed} formally describes how
this phenomenon occurs; by extending $\varepsilon $-covers to the notion of
circle covers, we can tease apart the multiplicity to find the
\textquotedblleft missing\textquotedblright\ intermediate covers. We
conclude this section with an example illustrating what can go wrong in
Theorem \ref{closed} when there is no positive lower bound on the size of
the essential circles.

\begin{example}
\label{zero limit ex} Consider the torus $T_{i}=S_{3/i}^{1}\times S_{1}^{1}$
formed by circles of circumference $\frac{3}{i}$ and $1$. Then $%
T_{i}\rightarrow S_{1}^{1}$. If we choose $\varepsilon _{i}$ so that $%
\varepsilon _{i}<\frac{1}{i}$ and $\varepsilon _{i}\rightarrow 0$, then each 
$(T_{i})_{\varepsilon _{i}}$ for $i\geq 3$ is the universal cover $\mathbb{R}%
^{2}$, but of course $\mathbb{R}^{2}$ is not any kind of cover of $S^{1}$.
This particular example satisfies the assumptions of Theorem 1.1, \cite{EW},
in which Ennis and Wei showed the following: Suppose $X_{i}\rightarrow X$
are all compact geodesic spaces having (categorical, possibly not simply
connected) universal covers. The latter assumption is equivalent to the
homotopy critical spectra having positive lower bounds $\varepsilon
_{i},\varepsilon $, and in fact the universal covers are $%
(X_{i})_{\varepsilon _{i}}$ and $X_{\varepsilon }$, respectively. Theorem
1.1, \cite{EW} says that if one additionally assumes that the spaces $X_{i}$
have dimension uniformly bounded above and the spaces $X_{\varepsilon _{i}}$
pointed Gromov-Hausdorff converge to a space $\overline{X}$, then there is a
subgroup $H\subset Iso(\bar{X})$ such that $\bar{X}/H$ is the universal
cover of $X$. Note that in the case of collapse, the subgroup $H$ need not
be discrete.

On the other hand, if $H$ denotes the geodesic Hawaiian Earring, then as $%
\varepsilon \rightarrow 0$ the $\varepsilon $-covers $H_{\varepsilon }$ of $H
$ contain graphs of higher and higher valency (see \cite{BPUU} for more
details). Thus, if $\inf \{\varepsilon_i\} = 0$, then no subsequence of $%
(X_i)_{\varepsilon_i}$ can converge.
\end{example}

\section{Covering Maps and Quotients}

We begin by recalling some results concerning quotients of metric spaces. In
this paper, all actions are by isometries and are discrete in the sense of 
\cite{PQ}. Discreteness of an action is a uniform version of properly
discontinuous, which is implied by the following property for any $G$ acting
by isometries on a metric space $Y$ : There exists some $\varepsilon >0$
such that for all $y\in Y$ and non-trivial $g\in G$, $d(y,g(y))\geq
\varepsilon $. Note that in the case where $Y$ is the $\varepsilon $-cover
of a metric space $X$, $\pi _{\varepsilon }(X)$ acts discretely on $%
X_{\varepsilon }$, since the restriction of $\phi _{\varepsilon }$ to any $%
B([\alpha ]_{\varepsilon },\frac{\varepsilon }{2})\subset X_{\varepsilon }$
is an isometry onto its image in $X$ (\cite{PW}). When $Y$ is geodesic, then
the quotient metric on $Y/G$ (cf. \cite{PS}) is the uniquely determined
geodesic metric such that the quotient mapping is a local isometry (\cite{PW}%
). Combining this observation with Proposition 28 in \cite{PQ} we obtain the
following:

\begin{proposition}
\label{quote}Suppose that $X$ is a geodesic space, $G$ acts discretely by
isometries on $X$, and $H$ is a normal subgroup of $G$. Then $G/H$ acts
discretely by isometries on $X/H$ via $gH(Hx)=g(x)H$ and the mapping $%
Gx\mapsto (G/H)(Hx)$ an isometry from $X/G$ to $(X/H)/(G/H)$.
\end{proposition}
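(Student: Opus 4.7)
The plan is to verify in turn that (a) the formula $gH(Hx):=g(x)H$ gives a well-defined action of $G/H$ on $X/H$, (b) this action is by isometries and is discrete, and (c) the natural map $\Psi:X/G\to (X/H)/(G/H)$, $Gx\mapsto (G/H)(Hx)$, is a bijective isometry. For (a), if $g_1H=g_2H$ and $Hx_1=Hx_2$, I would write $g_2=g_1h$ and $x_2=h'(x_1)$ for some $h,h'\in H$; then $g_2(x_2)=(g_1hh'g_1^{-1})g_1(x_1)$, and normality of $H$ places the conjugate $g_1hh'g_1^{-1}$ in $H$, so $g_2(x_2)H=g_1(x_1)H$ as required.

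For (b), the characterization recalled just before the statement tells us that the quotient map $\pi_H:X\to X/H$ is a local isometry. Each $g\in G$ descends through $\pi_H$ to a bijection $\bar g:X/H\to X/H$ satisfying $\bar g\circ\pi_H=\pi_H\circ g$; since $g$ is an isometry and $\pi_H$ is a surjective local isometry, this equation forces $\bar g$ to be a local isometry as well, and a bijective local isometry between geodesic spaces preserves path length and hence distance. For discreteness, let $\varepsilon>0$ witness the uniform version of discreteness for $G\curvearrowright X$. If $gH\neq H$ then $g\notin H$, so $hg\neq e$ for every $h\in H$, and therefore
\[
d_{X/H}(Hx,\; gH\cdot Hx)=\inf_{h\in H}d\bigl(x,hg(x)\bigr)\geq \varepsilon.
\]
Thus the same constant $\varepsilon$ witnesses discreteness of the induced $G/H$-action.

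For (c), well-definedness and bijectivity of $\Psi$ are immediate once one notes that the $G$-orbit of $x\in X$ is exactly the $\pi_H$-preimage of the $(G/H)$-orbit of $Hx$ in $X/H$. The isometry property then follows from the standard infimum formula for quotient metrics of discrete isometric actions applied twice:
\[
d_{(X/H)/(G/H)}\bigl((G/H)Hx,\;(G/H)Hy\bigr)=\inf_{gH\in G/H}\inf_{h\in H}d\bigl(x,hg(y)\bigr)=\inf_{g'\in G}d\bigl(x,g'(y)\bigr)=d_{X/G}(Gx,Gy).
\]
Alternatively, both $X/G$ and $(X/H)/(G/H)$ are geodesic (two applications of the quotient-metric characterization), and both admit local-isometric projections from $X$ with identical fibers, so the uniqueness of the geodesic metric with given local values forces $\Psi$ to be an isometry.

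The only genuine metric-geometric input needed at each stage is that the quotient by a discrete isometric action on a geodesic space is again a geodesic space with a local-isometric projection; this is precisely Proposition 28 of \cite{PQ}. Consequently, the main obstacle is not substance but bookkeeping: one must verify carefully that discreteness propagates through the quotient (handled above because no element of a non-trivial coset $gH$ can be the identity of $G$) and that the iterated infimum really coincides with the quotient metric on the double quotient, which is where the repeated application of the geodesic-quotient characterization is essential.
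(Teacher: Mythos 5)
Your proof is correct and is essentially the argument the paper has in mind: the paper offers no written proof, simply deriving the proposition by ``combining'' Proposition 28 of \cite{PQ} with the observation that the quotient of a geodesic space by a discrete isometric action carries the unique geodesic metric making the projection a local isometry, and your verification (well-definedness via normality, discreteness via the uniform displacement bound, and the iterated-infimum computation for the double quotient) is exactly the bookkeeping that citation suppresses. No gaps.
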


Among basic applications we have that for any geodesic space $X$ and $%
0<\delta <\varepsilon $, the covering map $\phi
_{\varepsilon}:X_{\varepsilon }\rightarrow X$ is isometrically equivalent to
the induced mapping $\zeta :X_{\delta }/\ker \theta _{\delta \varepsilon
}\rightarrow X$. Here $\ker \theta _{\delta \varepsilon }$ acts discretely
and isometrically as a normal subgroup of $\pi _{\delta }(X)$ with $%
X=X_{\delta}/\pi _{\delta }(X)$, and $\zeta $ is the unique covering map
such that $\zeta \circ \pi =\phi _{\delta }$, while $\pi :X_{\delta
}\rightarrow X_{\delta }/\ker \theta _{\delta \varepsilon }$ is the quotient
mapping.

We know already from the results of \cite{BPUU} that if $X$ is a compact
metric space, $Y$ is connected, and $f:Y\rightarrow X$ is a covering map
then for small enough $\varepsilon >0$ there is a covering map $%
g:X_{\varepsilon }\rightarrow Y$. The next proposition refines this
statement when $X$ is geodesic.

\begin{proposition}
\label{lebesgue}Let $X$ be a compact geodesic space and suppose that $%
f:Y\rightarrow X$ is a covering map, where $Y$ is connected. Suppose that $%
\varepsilon >0$ is at most $\frac{2}{3}$ of a Lebesgue number for a covering
of $X$ by open sets evenly covered by $f$. Then there is a covering map $%
g:X_{\varepsilon }\rightarrow Y$ such that $\phi _{\varepsilon }=f\circ g$.
\end{proposition}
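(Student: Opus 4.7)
The plan is to construct $g$ directly by lifting chordings of $\varepsilon$-chains through $f$. Fix a basepoint lift $\widetilde{\ast}\in f^{-1}(\ast)\subset Y$, and let $\lambda$ denote a Lebesgue number for the open cover of $X$ by sets evenly covered by $f$, so that $\tfrac{3\varepsilon}{2}\leq\lambda$. Given $[\alpha]_{\varepsilon}\in X_{\varepsilon}$ with $\alpha=\{\ast=x_{0},x_{1},\dots,x_{n}\}$, I choose any chording $\widehat{\alpha}$ of $\alpha$, lift $\widehat{\alpha}$ through $f$ to a path in $Y$ starting at $\widetilde{\ast}$, and define $g([\alpha]_{\varepsilon})$ to be its endpoint. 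If this is well-defined, the relation $\phi_{\varepsilon}=f\circ g$ is immediate because $f$ takes the endpoint of the lift to the endpoint of $\widehat{\alpha}$ in $X$, which is $x_{n}=\phi_{\varepsilon}([\alpha]_{\varepsilon})$.

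The crux is therefore well-definedness. The key observation is that for any geodesic triangle $T$ in $X$ with side lengths $L_{1},L_{2},L_{3}<\varepsilon$, the two routes along the sides of $T$ between any $p,q\in T$ have lengths summing to exactly $L_{1}+L_{2}+L_{3}$, so applying the triangle inequality to each route and averaging yields
\[
d(p,q)\;\leq\;\tfrac{L_{1}+L_{2}+L_{3}}{2}\;<\;\tfrac{3\varepsilon}{2}\;\leq\;\lambda .
\]
Hence $T$ lies in some $U$ of the given evenly covered cover. Independence from the choice of chording is the degenerate case $L_{3}=0$: two geodesics $\gamma,\gamma'$ joining $x_{i}$ to $x_{i+1}$ form a digon of diameter at most $d(x_{i},x_{i+1})<\varepsilon$, so both lift into the single sheet of $f^{-1}(U)$ containing the already-constructed lift of $x_{i}$, and they must end at the unique preimage of $x_{i+1}$ in that sheet. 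Invariance under $\varepsilon$-homotopy reduces to a single basic move — insertion of a point $y$ between $x_{i}$ and $x_{i+1}$ — and the nondegenerate triangle on $\{x_{i},y,x_{i+1}\}$ again lies in one evenly covered $U$, so in the sheet of $f^{-1}(U)$ containing the lift of $x_{i}$ both the one-segment and the two-segment lifts reach the same preimage of $x_{i+1}$. This is the step that makes the hypothesis $\varepsilon \leq \tfrac{2\lambda}{3}$ sharp.

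Continuity of $g$ follows from the ball-isometry property of $\phi_{\varepsilon}$ recalled in Section 2. At $[\alpha]_{\varepsilon}\in X_{\varepsilon}$, choose $r>0$ so small that $\phi_{\varepsilon}$ restricts to an isometry from $B:=B([\alpha]_{\varepsilon},r)$ onto a ball of $X$ sitting inside an evenly covered set $U$ whose sheet $V\subset f^{-1}(U)$ contains $g([\alpha]_{\varepsilon})$. For $[\alpha']_{\varepsilon}\in B$, represent it by $\alpha$ concatenated with a short chain from $x_{n}$; the chording lift then stays in $V$, giving the local formula $g|_{B}=(f|_{V})^{-1}\circ\phi_{\varepsilon}|_{B}$, which is a composition of isometries. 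Finally, $g$ is a continuous map with $\phi_{\varepsilon}=f\circ g$ where $\phi_{\varepsilon}$ and $f$ are both covering maps and $X_{\varepsilon}$, $Y$ are connected and locally path-connected (both are geodesic). The standard covering-space diagram-chase on evenly covered neighborhoods upgrades $g$ to a covering map, completing the proof.

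The step I expect to be the main obstacle is the diameter bound $\tfrac{3\varepsilon}{2}$ for the geodesic triangle. The naive estimate $d(p,q)\leq d(p,y)+d(y,q)<2\varepsilon$ is not strong enough and would force the constant in the hypothesis to be $\tfrac{1}{2}$ rather than $\tfrac{2}{3}$. What saves the argument is averaging both ways of traversing the triangle, which converts the perimeter bound $L_{1}+L_{2}+L_{3}<3\varepsilon$ into the diameter bound $\tfrac{3\varepsilon}{2}$ and makes the sharp form of the Lebesgue-number hypothesis precisely what the lifting argument needs; everything else is routine.
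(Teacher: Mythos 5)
Your proposal is correct and follows essentially the same route as the paper's proof: define $g$ by lifting a chording of each $\varepsilon$-chain through $f$, reduce well-definedness to a single point-insertion and to independence of the chosen geodesics, and bound the diameter of the resulting geodesic digon by $\varepsilon$ and of the triangle by $\tfrac{3\varepsilon}{2}\leq\lambda$ so that each lifts into a single sheet, then invoke the standard covering-space argument to upgrade $g$ to a covering map. Your explicit averaging argument for the $\tfrac{3\varepsilon}{2}$ diameter bound and the local formula $g|_{B}=(f|_{V})^{-1}\circ\phi_{\varepsilon}|_{B}$ just make precise what the paper leaves implicit.
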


\begin{proof}
Choose a basepoint $\ast $ in $Y$ such that $f(\ast )=\ast $ and define $%
g([\alpha ]_{\varepsilon })$ to be the endpoint lift of some stringing $%
\widehat{\alpha }$ starting at $\ast $ in $Y$. We need to check that $g$ is
well-defined. By iteration, it suffices to prove the following: If $%
\alpha:=\{x_{0},...,x_{n}\}$ and $\alpha ^{\prime }$ is an $\varepsilon $%
-chain $\{x_{0},...,x_{i},x,x_{i+1},...,x_{n}\}$, then for any stringings $%
\widehat{\alpha }$ and $\widehat{\alpha ^{\prime }}$, the endpoints of the
lifts of $\widehat{\alpha }$ and $\widehat{\alpha ^{\prime }}$ starting at $%
\ast $ are the same. Let $\left\{ \gamma _{i}\right\} $ and $\left\{
\gamma_{i}^{\prime }\right\} $ be geodesics joining $x_{i},x_{i+1}$ and $%
\beta_{1},\beta _{2}$ be geodesics from $x_{i}$ to $x$ and $x$ to $x_{i+1}$,
respectively. Note that each of the loops formed by each $\gamma _{i}$ and
the reversal of $\gamma _{i}^{\prime }$ has diameter smaller than $%
\varepsilon $ and thus lifts as a loop in $Y$. Moreover, the triangle formed
by the geodesics $\beta _{1}$, $\beta _{2}$, and either $\gamma _{i}$ or $%
\gamma _{i}^{\prime }$ has diameter less than $\frac{3\varepsilon }{2}$ and
hence lifts as a loop. This proves that $g$ is well-defined. It now follows
from a standard result in topology that $g$ is a covering map (\cite{Mu}).
(This result is stated with the additional assumption that all maps are
continuous, but that assumption is superfluous because the third mapping - $%
g $ in this case - is locally a homeomorphism.)
\end{proof}

\vspace{.1 in} The next theorem shows that any covering space of a compact
geodesic space can be obtained in a particularly natural way as a quotient
of the space $X_{\varepsilon }$ obtained in Proposition \ref{lebesgue}.

\begin{theorem}
\label{mcov}Let $X$ be a compact geodesic space and suppose that $%
f:Y\rightarrow X$ is a covering map, where $Y$ is connected and has the
lifted geodesic metric from $X$. Let $\varepsilon >0$ be such that there is
a covering map $g:X_{\varepsilon }\rightarrow Y$ with $\phi
_{\varepsilon}=f\circ g$. Define a subgroup $K$ of $\pi_{\varepsilon}(X)$ by 
\begin{equation*}
K:=\{[\lambda ]_{\varepsilon }:\exists \text{ }\kappa \in \lbrack \lambda
]_{\varepsilon }\text{ such that some stringing }\widehat{\kappa }\text{ of }%
\kappa \text{ lifts as a loop in }Y\}\text{.}
\end{equation*}
Then

\begin{enumerate}
\item[\emph{1.}] $K=\{[\lambda ]_{\varepsilon }:$ for all $\kappa \in
\lbrack \lambda ]_{\varepsilon }$, every stringing $\widehat{\kappa }$ of $%
\kappa $ lifts as a loop to $Y\}$.

\item[\emph{2.}] There is a covering equivalence $\phi :Y\rightarrow
X_{\varepsilon }/K$ such that $\pi =\phi \circ g$, where $\pi:X_{\varepsilon
}\rightarrow X_{\varepsilon }/K$ is the quotient map.

\item[\emph{3.}] $K$ is a normal subgroup if and only if $f$ is a regular
covering map.
\end{enumerate}
\end{theorem}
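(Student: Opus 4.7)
The plan is to prove all three parts by first establishing one simple controlling observation: since $\phi_\varepsilon = f \circ g$, Proposition~\ref{ender} lifts any stringing $\widehat{\kappa}$ of an $\varepsilon$-loop $\kappa$ based at $\ast$ via $\phi_\varepsilon$ to a path from $[\ast]_\varepsilon$ to $[\kappa]_\varepsilon$, so post-composing with $g$ yields the unique $f$-lift of $\widehat{\kappa}$ starting at $\ast \in Y$, and this lift ends at $g([\kappa]_\varepsilon)$. Hence $\widehat{\kappa}$ lifts to a loop in $Y$ if and only if $g([\kappa]_\varepsilon) = \ast$, a condition depending only on the class $[\lambda]_\varepsilon = [\kappa]_\varepsilon$. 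This immediately yields Part~1 and identifies $K$ with $\{[\lambda]_\varepsilon \in \pi_\varepsilon(X) : g([\lambda]_\varepsilon) = \ast\}$.

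For Part~2 I would next identify $K$ with the deck group of $g$. If $g([\lambda]_\varepsilon) = \ast$, then $g$ and its composition with the deck transformation of $\phi_\varepsilon$ determined by $[\lambda]_\varepsilon$ are two lifts of $\phi_\varepsilon$ through $f$ agreeing at $[\ast]_\varepsilon$, so by uniqueness of covering lifts they agree everywhere; the converse is immediate. The slightly more delicate step is to show that $g$ itself is regular, which forces $Y \cong X_\varepsilon/K$: if $[\alpha]_\varepsilon$ and $[\beta]_\varepsilon$ lie in the same $g$-fiber they also lie in the same $\phi_\varepsilon$-fiber, so regularity of $\phi_\varepsilon$ supplies $\tau \in \pi_\varepsilon(X)$ with $\tau \cdot [\alpha]_\varepsilon = [\beta]_\varepsilon$, and uniqueness of lifts applied to $g$ and $g \circ \tau$ forces $\tau \in K$. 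The covering equivalence $\phi \colon Y \to X_\varepsilon/K$ is then defined by $\phi(y) := \pi([\alpha]_\varepsilon)$ for any $[\alpha]_\varepsilon \in g^{-1}(y)$; it is well-defined and bijective by the regularity of $g$, a local isometry since both $\pi$ and $g$ are, and satisfies $\pi = \phi \circ g$ by construction.

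For Part~3 the strategy is standard Galois correspondence, now enabled by Part~2. If $K$ is normal in $G := \pi_\varepsilon(X)$, then Proposition~\ref{quote} applied with $H = K$ exhibits $f$ as the quotient covering $(X_\varepsilon/K)/(G/K) \to X$, which is regular. Conversely, suppose $f$ is regular. Given $\tau \in G$ and $\kappa \in K$, examine the $f$-lift of the $\varepsilon$-loop $\tau\kappa\tau^{-1}$ at $\ast \in Y$: it traverses the lift of $\tau$ to $y_\tau := g([\tau]_\varepsilon)$, then the lift of $\kappa$ starting at $y_\tau$---which is a loop at $y_\tau$ since $\kappa$ lifts as a loop at $\ast$ and $f$ is regular---then the reversed lift of $\tau$ back to $\ast$, so the full lift is a loop. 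Hence $[\tau\kappa\tau^{-1}]_\varepsilon \in K$, proving normality.

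The main obstacle I expect is the regularity of $g$ in Part~2: since $f$ itself need not be regular, it is not a priori clear that $Y$ coincides with $X_\varepsilon/K$ rather than being a proper further quotient. The resolution is that $g$ inherits regularity from $\phi_\varepsilon$ through uniqueness of covering lifts, after which the remainder of the theorem unwinds from standard covering-space arguments and the basepoint-lifting tools of Section~2.
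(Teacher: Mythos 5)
Your proposal is correct, and for Parts 1 and 2 it follows essentially the same route as the paper: everything is driven by the observation (via Proposition \ref{ender} and $\phi_{\varepsilon}=f\circ g$) that the unique $f$-lift of any stringing of $\kappa$ at $\ast$ ends at $g([\kappa]_{\varepsilon})$, so that $K=\{[\lambda]_{\varepsilon}:g([\lambda]_{\varepsilon})=\ast\}$ and $\phi(y):=\pi([\alpha]_{\varepsilon})$ for $[\alpha]_{\varepsilon}\in g^{-1}(y)$ is the desired equivalence. Where the paper checks well-definedness and injectivity of $\phi$ by explicitly manipulating the chains $\lambda=\alpha\ast\overline{\beta}$, you package the same content as ``$K$-orbits equal $g$-fibers'' via uniqueness of continuous lifts applied to $g$ and $g\circ\tau_{\ast}$; this is a cosmetic but clean reorganization, and you correctly identify the only delicate point (that $Y$ is not a proper further quotient of $X_{\varepsilon}/K$). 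The one genuinely different piece is the converse in Part 3: the paper constructs a homomorphism $h:\pi_{\varepsilon}(X)\to H$ onto the deck group of the regular cover and identifies $K$ as its kernel, whereas you verify normality directly by lifting a stringing of $\tau\ast\kappa\ast\overline{\tau}$ and invoking the standard fact that for a regular cover a loop lifting closed at one point of a fiber lifts closed at every point (applicable since $y_{\tau}=g([\tau]_{\varepsilon})\in f^{-1}(\ast)$). Your argument is more elementary and avoids checking that $h$ is well defined and a homomorphism; the paper's version has the mild advantage of exhibiting the isomorphism $\pi_{\varepsilon}(X)/K\cong H$ explicitly. Only trivial loose ends remain in your write-up (e.g., confirming $\xi\circ\phi=f$, which follows from $\xi\circ\phi\circ g=\xi\circ\pi=\phi_{\varepsilon}=f\circ g$ and surjectivity of $g$), none of which constitutes a gap.
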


\begin{proof}
The first part follows from Proposition \ref{ender}. Define $\phi (y):=\pi(x)
$, where $x\in g^{-1}(y)$. To see why $\phi $ is well-defined, suppose that $%
g([\alpha ]_{\varepsilon })=g([\beta ]_{\varepsilon })=y$. Since the lifts $%
\widetilde{\alpha }$ and $\widetilde{\beta }$ of $\widehat{\alpha }$ and $%
\widehat{\beta }$ to the basepoint in $X_{\varepsilon }$ have $%
[\alpha]_{\varepsilon }$ and $[\beta ]_{\varepsilon }$ as their endpoints
(Proposition \ref{ender}), $g\circ \widetilde{\alpha }$ and $g\circ 
\widetilde{\beta }$ both end in $y$. But these two curves are the unique
lifts of $\widehat{\alpha }$ and $\widehat{\beta }$ to $Y$, so they both end
in $y$. In other words if $\lambda :=\alpha \ast \overline{\beta }$, then $%
[\lambda ]_{\varepsilon }\in K$. Moreover, $[\alpha ]_{\varepsilon
}:=[\lambda ]_{\varepsilon }\ast \lbrack \beta ]_{\varepsilon }$, so $%
[\alpha ]_{\varepsilon }$ and $[\beta ]_{\varepsilon }$ lie in the same
orbit of $K$, and $\pi ([\alpha ]_{\varepsilon })=\pi ([\beta ]_{\varepsilon
})$. This shows that $\phi $ is well-defined, and clearly $\pi =\phi \circ g$%
.

For surjectivity, let $K[\alpha ]_{\varepsilon }\in X_{\varepsilon }/K$ and
define $y:=g([\alpha ]_{\varepsilon })$. We have 
\begin{equation*}
\phi (y)=\phi (g([\alpha ]_{\varepsilon }))=\pi ([\alpha
]_{\varepsilon})=K[\alpha ]_{\varepsilon }\text{.}
\end{equation*}
For injectivity, suppose that $y,z\in Y$ satisfy $\phi (y)=\phi (z)$. Then
for $[\alpha ]_{\varepsilon },[\beta ]_{\varepsilon }$ such that $%
g([\alpha]_{\varepsilon })=y$ and $g([\beta ]_{\varepsilon })=z$ we have
that $[\alpha ]_{\varepsilon },[\beta ]_{\varepsilon }$ lie in the same
orbit, so $[\alpha ]_{\varepsilon }\ast \lbrack \overline{\beta }%
]_{\varepsilon}=[\lambda ]_{\varepsilon }\in K$. Consequently, the lifts of
stringings $\widehat{\alpha }$ and $\widehat{\beta }$ to $Y$ at the
basepoint must have the same endpoint. But one lift ends in $y$ and the
other ends in $z$, so $y=z$. This shows that $\phi $ is a homeomorphism. The
natural covering map $\xi :X_{\varepsilon }/K\rightarrow X$ is defined by $%
\xi (K[\alpha]_{\varepsilon })=\phi _{\varepsilon }([\alpha ]_{\varepsilon })
$ and therefore $\xi \circ \phi =f$, showing that $\phi $ is a covering
equivalence.

If $K$ is normal then by Lemma 39 in \cite{PQ}, the covering map $%
\xi:X_{\varepsilon }/K\rightarrow X$ is a topological quotient map with
respect to a well-defined induced action of $\pi _{\varepsilon }(X)/K$,
defined by $[\lambda ]_{\varepsilon }K([\alpha ]_{\varepsilon
}K)=([\lambda]_{\varepsilon }([\alpha ])_{\varepsilon })K$. Since $\xi $ is
also a covering map, it follows from standard results in topology that $\xi $
is regular. Since $\phi $ is a covering equivalence, $f$ is also regular.
Conversely, suppose that $\xi $ is regular and let $H$ denote its group of
covering transformations. Define a function $h:\pi
_{\varepsilon}(X)\rightarrow H$ as follows: Given $[\alpha ]_{\varepsilon
}\in \pi_{\varepsilon }(X)$, let $y$ be the endpoint of the lift of some
chording $\widehat{\alpha }$ to $Y$ starting at the basepoint. Since $%
\phi_{\varepsilon }=g\circ f$, by uniqueness $y$ must also be the endpoint $%
f\circ c$, where $c$ is the lift of $\widehat{\alpha }$ to $X_{\varepsilon }$%
. By Proposition \ref{ender}, the endpoint of $c$ is just $%
[\alpha]_{\varepsilon }$, and hence $y=f([\alpha ]_{\varepsilon })$ depends
only on $[\alpha ]_{\varepsilon }$. That is, if we let $h([\alpha
]_{\varepsilon })$ be the (unique) $\mu \in H$ such that $\mu (\ast )=y$,
then $h$ is well-defined. Sorting through the definition shows that $h$ is a
homomorphism with kernel $K$, since $y$ is the basepoint exactly when $%
\widehat{\alpha }$ lifts as a loop.
\end{proof}

\begin{corollary}
\label{liftcor}Let $X$ be compact geodesic and $\varepsilon >0$. Then an $%
\varepsilon $-loop $\alpha $ is $\varepsilon $-null if and only if some
(equivalently any) stringing of $\alpha $ lifts to a loop in $X_{\varepsilon}
$.
\end{corollary}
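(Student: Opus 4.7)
The plan is to read the corollary off of Proposition \ref{ender}. Let $\alpha$ be an $\varepsilon$-loop based at $\ast$ and let $\widehat{\alpha}$ be any stringing of $\alpha$. Proposition \ref{ender} tells us that the unique lift of $\widehat{\alpha}$ to $X_{\varepsilon}$ starting at the basepoint $[\ast]_{\varepsilon}$ terminates at $[\alpha]_{\varepsilon}$. Since the lift already starts at $[\ast]_{\varepsilon}$, it is a closed loop in $X_{\varepsilon}$ precisely when $[\alpha]_{\varepsilon} = [\ast]_{\varepsilon}$, which by definition is exactly the statement that $\alpha$ is $\varepsilon$-null. This disposes of both directions of the ``if and only if'' simultaneously.

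The ``some (equivalently any)'' qualifier is built into the same argument: the endpoint $[\alpha]_{\varepsilon}$ of the lift depends only on $\alpha$ itself, and not on the particular stringing $\widehat{\alpha}$ chosen, so the condition ``the lift is a loop'' reduces in every case to the single assertion $[\alpha]_{\varepsilon} = [\ast]_{\varepsilon}$. Hence if one stringing of $\alpha$ lifts as a loop, every stringing does, and conversely. One could alternately frame the corollary as the special case of Theorem \ref{mcov} with $Y = X_{\varepsilon}$, $f = \phi_{\varepsilon}$, and $g$ the identity map: the associated subgroup $K$ must collapse to the identity element, since the covering equivalence $\phi : X_{\varepsilon} \to X_{\varepsilon}/K$ produced by the theorem coincides with the quotient map and hence is a bijection. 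Unwinding the defining condition for $K$ in that setting recovers precisely the statement of the corollary.

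There is no real obstacle here: the content is essentially a rephrasing of Proposition \ref{ender} together with the definition of $\varepsilon$-null. The only point worth emphasizing in the write-up is that Proposition \ref{ender} supplies both the existence and uniqueness of the lift, which is what makes the distinction between ``some'' and ``any'' stringing disappear. I expect the total proof to be only a couple of sentences long.
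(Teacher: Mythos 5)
Your core argument is correct and is exactly the mechanism the paper relies on: Proposition \ref{ender} identifies the endpoint of the lift of any stringing with $[\alpha]_{\varepsilon}$, so the lift closes up precisely when $[\alpha]_{\varepsilon}=[\ast]_{\varepsilon}$, and the fact that this endpoint is independent of the choice of stringing gives the ``some/any'' equivalence for free. The alternative framing via Theorem \ref{mcov} with $Y=X_{\varepsilon}$ and $g$ the identity is also valid.

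There is, however, one point you pass over, and it happens to be the only thing the paper's own proof addresses: the corollary is stated for an arbitrary $\varepsilon$-loop $\alpha$, not just one based at $\ast$. If $\alpha$ begins and ends at some $x_{0}\neq\ast$, then $[\alpha]_{\varepsilon}$ is not an element of $X_{\varepsilon}$ (which by definition consists of classes of chains starting at $\ast$), and the lift of a stringing $\widehat{\alpha}$ cannot start at $[\ast]_{\varepsilon}$, so Proposition \ref{ender} does not apply verbatim; likewise the subgroup $K$ in Theorem \ref{mcov} consists only of classes of based loops. The fix is the conjugation reduction, which is the entire content of the paper's proof: choose an $\varepsilon$-chain $\beta$ from $\ast$ to $x_{0}$; then $\alpha$ is $\varepsilon$-null if and only if $\beta\ast\alpha\ast\overline{\beta}$ is, and a stringing of $\alpha$ lifts as a loop (at some, equivalently any, point of the fiber over $x_{0}$, since $\phi_{\varepsilon}$ is a regular covering) if and only if the corresponding stringing of $\beta\ast\alpha\ast\overline{\beta}$ lifts as a loop at $[\ast]_{\varepsilon}$. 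With that one observation added, your proof is complete and coincides with the paper's.
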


\begin{proof}
We only note that $\alpha $ is $\varepsilon $-null if and only if $\beta
\ast \alpha \ast \overline{\beta }$ is $\varepsilon $-null for any $%
\varepsilon $-chain from the basepoint to the starting point of $\alpha $,
and the analogous statement also holds for null-homotopies of any stringing
of $\alpha $.
\end{proof}

\vspace{.1 in} The $\delta $-covering map defined by Sormani-Wei is obtained
using a construction of Spanier that provides for any open cover $\mathcal{U}
$ of a connected, locally path connected topological space $Z$ a covering
map $\phi _{\mathcal{U}}:W\rightarrow Z$. The covering map is characterized
by the fact that a path loop $c$ at the basepoint in $Z$ lifts as a loop to $%
W$ if and only if its homotopy equivalence class $[c]$ lies in the subgroup $%
S_{ \mathcal{U}}$ of $\pi _{1}(Z)$, which we will call the \textit{Spanier
Group of }$\mathcal{U}$, generated by all loops of the following form: $%
c\ast L\ast \overline{c}$, where $c$ is a path loop starting at the
basepoint and $L$ is a path loop lying entirely in some set in the open
cover $\mathcal{U}$ . For their construction, Sormani-Wei took $\mathcal{U}$
to be the cover of $X$ by open $\delta $-balls. We will denote the
corresponding Spanier Group by $S_{\delta }$.

\begin{corollary}
\label{sw}For any geodesic space $X$ and $\delta =\frac{3\varepsilon }{2}>0$%
, there is an equivalence of the covering maps (hence an isometry) $h: 
\widetilde{X}^{\delta }\rightarrow X_{\varepsilon }$.
\end{corollary}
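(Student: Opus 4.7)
My plan is to use Proposition \ref{lebesgue} together with Theorem \ref{mcov} to express $\widetilde{X}^\delta$ as $X_\varepsilon/K$ for an explicit subgroup $K\subseteq\pi_\varepsilon(X)$, and then show $K$ is trivial via a displacement estimate in $X_\varepsilon$. In a geodesic space the open cover $\{B(x,\delta):x\in X\}$ has Lebesgue number at least $\delta$ (any set of diameter less than $\delta$ lies in such a ball centered at any of its points), and each member is evenly covered by $\pi^\delta$ by construction of the Sormani--Wei cover. Since $\varepsilon=\tfrac{2}{3}\delta$, Proposition \ref{lebesgue} supplies a covering map $g:X_\varepsilon\to\widetilde{X}^\delta$ with $\phi_\varepsilon=\pi^\delta\circ g$, and Theorem \ref{mcov} then provides a covering equivalence $\phi:\widetilde{X}^\delta\to X_\varepsilon/K$ for
\[
K=\bigl\{[\lambda]_\varepsilon\in\pi_\varepsilon(X):\text{some stringing of }\lambda\text{ lifts as a loop to }\widetilde{X}^\delta\bigr\}.
\]
So the corollary reduces to showing $K=\{[\ast]_\varepsilon\}$: the composite $h:=\phi^{-1}\circ\pi$ (with $\pi:X_\varepsilon\to X_\varepsilon/K$ the quotient) is then a covering equivalence, automatically an isometry because both covers carry lifted geodesic metrics from $X$.

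The key is that every open $\delta$-ball in $X$ is already evenly covered by $\phi_\varepsilon$. For this I use the displacement estimate that distinct lifts of any $p\in X$ lie at distance at least $2\delta=3\varepsilon$ apart in $X_\varepsilon$: if $\tilde p_1=[\alpha_1]_\varepsilon\neq[\alpha_2]_\varepsilon=\tilde p_2$ both project to $p$, then $\overline{\alpha_1}\ast\alpha_2$ is a nontrivial $\varepsilon$-loop, so Corollary \ref{three} rules out any $\varepsilon$-chain in its $\varepsilon$-homotopy class of length less than $3\varepsilon$, yielding $d(\tilde p_1,\tilde p_2)=\bigl|[\overline{\alpha_1}\ast\alpha_2]_\varepsilon\bigr|\geq 3\varepsilon$. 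The open $\delta$-balls around distinct lifts of $p$ are therefore pairwise disjoint, and since $X$ is geodesic with $\phi_\varepsilon$ a local isometry, $\phi_\varepsilon^{-1}(B(p,\delta))$ equals the disjoint union of these $\delta$-balls, each mapped isometrically onto $B(p,\delta)$.

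With $B(p,\delta)$ evenly covered by $\phi_\varepsilon$, every path loop $L\subset B(p,\delta)$ lifts to a loop in $X_\varepsilon$ (its lift stays in one sheet). Corollary \ref{liftcor} then forces every subdivision $\varepsilon$-chain of $L$ to be $\varepsilon$-null, i.e., $\Lambda([L])=[\ast]_\varepsilon$. Since $S_\delta$ is normally generated in $\pi_1(X)$ by loops of the form $c\ast L\ast\bar c$ with $L\subset B(p,\delta)$, and a stringing $\widehat\lambda$ lifts as a loop to $\widetilde{X}^\delta$ exactly when $[\widehat\lambda]\in S_\delta$, this yields $K=\Lambda(S_\delta)=\{[\ast]_\varepsilon\}$, completing the argument. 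The main technical obstacle is the displacement estimate: the factor $\tfrac{3}{2}$ in $\delta=\tfrac{3\varepsilon}{2}$ is chosen precisely so that $2\delta=3\varepsilon$ matches Corollary \ref{three}, making $\delta$ the largest radius for which balls lift evenly in $X_\varepsilon$, and the rest of the proof is a routine assembly of the earlier machinery.
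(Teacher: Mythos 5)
Your proposal is correct, and its overall architecture matches the paper's: both reduce, via Proposition \ref{lebesgue} (taking $\delta$ as the Lebesgue number, since all open $\delta$-balls are evenly covered by $\pi^{\delta}$) and Theorem \ref{mcov}, to showing that the subgroup $K$ associated to $Y=\widetilde{X}^{\delta}$ is trivial, i.e.\ that path loops lying in open $\delta$-balls die at the $\varepsilon$-chain level. Where you genuinely diverge is in how you kill such loops. The paper argues combinatorially: it slices a rectifiable loop $f$ in $B(x,\delta)$ into short segments and expresses an $\varepsilon$-refinement of $f$ as a product of triangle loops $\{x,x_{i},x_{i+1},x\}$, each of length less than $2\delta=3\varepsilon$ and hence $\varepsilon$-null by Corollary \ref{three}, with Corollary \ref{homimp} transferring the path homotopy to an $\varepsilon$-homotopy. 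You instead prove the stronger geometric statement that every open $\delta$-ball is evenly covered by $\phi_{\varepsilon}$, deducing it from the displacement bound $d(\tilde{p}_{1},\tilde{p}_{2})\geq 3\varepsilon=2\delta$ between distinct fiber points (a correct consequence of Corollary \ref{three} and the metric formula (\ref{mdeff})), and then invoke Corollary \ref{liftcor}. Both routes rest on exactly the same numerology $2\delta=3\varepsilon$; yours buys a reusable intermediate fact (an explicit even-covering radius $\frac{3\varepsilon}{2}$ for $\phi_{\varepsilon}$, sharper than the $\frac{\varepsilon}{2}$-ball statement quoted in Section 3) at the cost of the routine but unstated verifications that the disjoint $\delta$-balls about the fiber exhaust $\phi_{\varepsilon}^{-1}(B(p,\delta))$ and surject onto $B(p,\delta)$ (lift geodesics both ways). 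Two small caveats: \textquotedblleft mapped isometrically\textquotedblright\ overstates what you prove and what you need --- a homeomorphism on each sheet suffices, and a global isometry on the full $\delta$-ball does not obviously follow from the displacement bound alone; and Corollary \ref{liftcor} is stated for compact $X$ while the present corollary concerns arbitrary geodesic $X$, though its proof uses no compactness and the paper's own appeal to Proposition \ref{lebesgue} and Theorem \ref{mcov} carries the same caveat.
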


\begin{proof}
An immediate consequence of the definition of $\widetilde{X}^{\delta }$ is
that all open $\delta $-balls are evenly covered by $\pi ^{\delta }$, and
hence we may take $\delta $ for the Lebesgue number in Proposition \ref%
{lebesgue}. That proposition gives a covering map $\psi :X_{\varepsilon
}\rightarrow \widetilde{X}^{\delta }$ and Theorem \ref{mcov} will finish the
proof if we can show that the group $K$ for $Y:=\widetilde{X}^{\delta }$ is
trivial. If $[\lambda ]_{\varepsilon }\in K$ then by definition some
chording $\widehat{\lambda }$ lifts as a loop in $\widetilde{X}^{\delta }$.
In other words, $\widehat{\lambda }$ is homotopic to a concatenation of
paths of the form $c_{i}\ast L_{i}\ast \overline{c_{i}}$ where $L_{i}$ is a
path loop that lies entirely in an open $\delta $-ball. According to
Corollary \ref{homimp} we need only show that any refinement $\varepsilon $%
-chain of any such path is $\varepsilon $-null. In other words, it is enough
to show the following: Any $\varepsilon $-refinement of a rectifiable loop $%
f $ in an open ball $B(x,\delta )$ is $\varepsilon $-null. Given such a loop 
$f $, the distance from points on $f$ to $x$ has a maximum $D<\delta $. Now
subdivide $f$ into segments $\sigma _{i}$ whose endpoints $x_{i},x_{i+1}$
satisfy $d(x_{i},x_{i+1})<\delta -D$. Then each of the $\varepsilon $-loops $%
\kappa _{i}:=\{x,x_{i},x_{i+1},x\}$ (where $x_{n}=x_{0}$ for the highest
index $n$) has length less than $D+D+\delta -D=D+\delta <2\delta
=3\varepsilon $. But then each $\kappa _{i}$, and hence $f$, is $\varepsilon 
$-null by Corollary \ref{three}.
\end{proof}

\begin{corollary}
\label{lker}If $X$ is a geodesic space, $\varepsilon >0$, and $\Lambda
:\pi_{1}(X)\rightarrow \pi _{\varepsilon }(X)$ is the natural homomorphism
defined in the second section, then $\ker \Lambda =S_{\frac{3\varepsilon }{2}%
}$.
\end{corollary}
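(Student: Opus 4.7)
The plan is to chain together three facts already in hand: the isometric covering equivalence $X_\varepsilon \cong \widetilde{X}^{3\varepsilon/2}$ from Corollary \ref{sw}, the lifting criterion for stringings from Corollary \ref{liftcor}, and Spanier's classical characterization of which path loops lift as loops to $\widetilde{X}^{\mathcal{U}}$. Once these are lined up, the equality $\ker\Lambda = S_{3\varepsilon/2}$ is essentially automatic, because both subgroups describe the subgroup of $\pi_1(X)$ consisting of classes of path loops that lift as loops to the same covering space of $X$.

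First I would fix a path loop $c$ at the basepoint and choose a subdivision partition $0 = t_0 < \cdots < t_n = 1$ fine enough that $c([t_i,t_{i+1}]) \subset B(c(t_i),\varepsilon)$. Set $\alpha = \{c(t_0),\dots,c(t_n)\}$, the associated subdivision $\varepsilon$-chain, so that $\Lambda([c]) = [\alpha]_\varepsilon$. The crucial observation is that $c$ itself is a stringing of $\alpha$, since the restriction of $c$ to each $[t_i,t_{i+1}]$ is a path from $c(t_i)$ to $c(t_{i+1})$ lying in $B(c(t_i),\varepsilon)$. Therefore by Corollary \ref{liftcor}, $[\alpha]_\varepsilon$ is trivial in $\pi_\varepsilon(X)$ if and only if $c$ lifts as a loop in $X_\varepsilon$; equivalently, $[c] \in \ker\Lambda$ if and only if $c$ lifts as a loop in $X_\varepsilon$.

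Next I would invoke the covering equivalence $h:\widetilde{X}^{3\varepsilon/2} \to X_\varepsilon$ of Corollary \ref{sw}, which is basepoint-preserving. Since equivalent covering maps share the same set of path loops that lift as loops, $c$ lifts as a loop in $X_\varepsilon$ if and only if it lifts as a loop in $\widetilde{X}^{3\varepsilon/2}$. Finally, Spanier's characterization of his construction, recalled immediately before the statement of this corollary, tells us that a path loop lifts as a loop to $\widetilde{X}^{3\varepsilon/2}$ precisely when its homotopy class lies in the Spanier group $S_{3\varepsilon/2}$. Concatenating the three equivalences yields $[c] \in \ker\Lambda \iff [c] \in S_{3\varepsilon/2}$.

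There is no genuine obstacle here — it is really a bookkeeping result. The one subtlety that deserves a sentence of care is that the passage from $[c]$ to $\Lambda([c])$ depends on a choice of subdivision, but Proposition \ref{ender} (or the well-definedness of $\Lambda$ established in the preamble) ensures the statement is independent of that choice; it suffices to verify the equivalence for a single sufficiently fine subdivision, as above, since the subdivision is a stringing of its own chain and Corollary \ref{liftcor} already provides a logical equivalence free of stringing choices.
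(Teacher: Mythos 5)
Your proof is correct and follows essentially the same route as the paper's: reduce $[c]\in\ker\Lambda$ to the $\varepsilon$-nullity of a subdivision chain, apply Corollary \ref{liftcor} (noting that $c$ itself is a stringing of that chain), and then transfer the lifting condition through the equivalence of Corollary \ref{sw} to Spanier's characterization of $S_{3\varepsilon/2}$. Your extra remark on independence of the subdivision choice is a welcome clarification of a point the paper leaves implicit.
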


\begin{proof}
We have $[c]\in \ker \Lambda $ if and only if some subdivision $\varepsilon $%
-chain $\alpha $ of $c$ is $\varepsilon $-null. Equivalently, by Corollary %
\ref{liftcor}, any stringing of $\alpha $ lifts to a loop in $X_{\varepsilon}
$. But since $X_{\varepsilon }=\widetilde{X}^{\delta }$ by Corollary \ref{sw}%
, this is equivalent to $[c]\in S_{\frac{3\varepsilon }{2}}$.
\end{proof}

\begin{definition}
\label{circle cover} Suppose $\mathcal{T}$ is a finite collection of
essential triads in a compact geodesic space and $\varepsilon >0$ is such
that each $T\in \mathcal{\ T}$ is a $\delta $-triad for some $\delta \geq
\varepsilon $. Define $K_{\varepsilon }(\mathcal{T)}$ to be the subgroup of $%
\pi _{\varepsilon }(X)$ generated by the collection $\Gamma _{\varepsilon }(%
\mathcal{T)}$ of all $[\alpha \ast T^{\prime }\ast \overline{\alpha }%
]_{\varepsilon }$, where $\alpha $ is an $\varepsilon $-chain starting at $%
\ast $ and $T^{\prime }$ is an $\varepsilon $-refinement of $T\in \mathcal{T}
$. Finally, we define $\phi_{\varepsilon }^{\mathcal{T}}:X_{\varepsilon
}/K_{\varepsilon }(\mathcal{T} )\rightarrow X$ by $\phi _{\varepsilon }^{%
\mathcal{T}}(K_{\varepsilon }( \mathcal{T)[\alpha ]}_{\varepsilon }):=\phi
_{\varepsilon }([\alpha]_{\varepsilon })$. We will call a covering map
equivalent to some $\phi_{\varepsilon }^{\mathcal{T}}$ a circle covering map
(including the case when $\mathcal{T}$ is empty, in which case we take $%
K_{\varepsilon }( \mathcal{T)}$ to be the trivial group, so $\phi
_{\varepsilon }^{\mathcal{T}}=\phi _{\varepsilon }$).
\end{definition}

\begin{remark}
\label{comments} First, note that $K_{\varepsilon }(\mathcal{T})$ is normal
in $\pi _{\varepsilon }(X)$, since any conjugate of $[\alpha \ast
T^{\prime}\ast \alpha ^{-1}]_{\varepsilon }$ has the same form. If $\mathcal{%
T}=\{T_{i}\}_{i=1}^{n}$, it is also easy to check that for any fixed choice
of $\varepsilon $-chains $\alpha _{i}$ from $\ast $ to $T_{i}$, $%
K_{\varepsilon}(\mathcal{T)}$ is the smallest normal subgroup containing the
finite set $\{[\alpha _{i}\ast T_{i}^{\prime }\ast \overline{\alpha _{i}}%
]_{\varepsilon }\}_{i=1}^{n}$. However, we do not know in general whether $%
K_{\varepsilon }(\mathcal{T})$ is finitely generated. Finally, a word of
caution. While $\delta $-refinements of an essential $\delta $ -triad $T$
are all $\delta $-homotopic, different $\varepsilon $-refinements of $T$
need not be $\varepsilon $-homotopic when $\delta $ is larger than $%
\varepsilon $. In fact, as simple geodesic graphs show, the vertices of $T$
may be joined by different geodesics that together form loops that are
always $\delta $-null but are not $\varepsilon $-null. That is, in general,
replacing a $\delta $-triad $T\in \mathcal{T}$ with a $\delta $-equivalent $%
\delta $-triad may change the group $K_{\varepsilon }(\mathcal{T})$. This
further emphasizes the essential dependence of $K_{\varepsilon }(\mathcal{T})
$ on not just the collection $\mathcal{T}$ but on the value of $\varepsilon $.
\end{remark}

%\begin{remark}
%Since circle covers are quotients of entourage covers (i.e. the $\varepsilon$-covers) by discrete actions, it follows from the work of Berestovskii-Plaut and Plaut (cf. Proposition 23, \cite{BPUU} and Lemma 39, \cite{PQ}) that circle covers are, themselves, entourage covers in the sense of Berestovskii-Plaut. The general question of which covers of a geodesic space are circle covers, entourage covers, etc. is a more difficult one, and the authors hope to address this in future work.  This issue is discussed briefly in Section 5.
%\end{remark}

\begin{notation}
When convenient we will denote $X$ by $X_{\infty }$; this makes sense since
any chain can be considered as an $\infty $-chain and every sequence of
chains in which one point is removed or added to get from one chain to the
next is an $\infty $-homotopy. Then every chain is $\infty $-homotopy
equivalent to the chain $\{x,y\}$ where $x$ and $y$ are its endpoints, and
hence the mapping $\phi _{\varepsilon }:X_{\varepsilon }\rightarrow X$ is
naturally identified with $\phi _{\infty \varepsilon }:X_{\varepsilon
}\rightarrow X_{\infty }$. This saves us from having to consider the mapping 
$\phi _{\varepsilon }$ as a special case in the statements that follow.
\end{notation}

\begin{proposition}
\label{delta}Let $\varepsilon >0$ be a homotopy critical value for a compact
geodesic space $X$. Then there is some $\delta >\varepsilon $ such that if $%
\{x_{0},x_{1},x_{2},x_{0}\}$ is $\delta $-small with a midpoint refinement $%
\alpha $ that is not $\varepsilon $-null then $\alpha $ is $\varepsilon $%
-homotopic to a midpoint refinement of an essential $\varepsilon $-triad.
\end{proposition}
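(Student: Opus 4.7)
The plan is to argue by contradiction and invoke compactness of $X$. Assume no such $\delta$ exists. Then I can find a sequence $\delta_n\searrow\varepsilon$ with $\delta_n<2\varepsilon$ and, for each $n$, a $\delta_n$-small loop $\tau_n=\{x_0^n,x_1^n,x_2^n,x_0^n\}$ whose midpoint refinement $\alpha_n$ is not $\varepsilon$-null and is not $\varepsilon$-homotopic to the midpoint refinement of any essential $\varepsilon$-triad. Since $\delta_n<2\varepsilon$, each $\alpha_n$ is itself an $\varepsilon$-chain: its successive steps have length at most $\delta_n/2<\varepsilon$.

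By compactness of $X$, after passing to a subsequence I may assume $x_i^n\to x_i$ for $i=0,1,2$, and a further extraction makes the chosen midpoints $m_i^n$ of $\{x_i^n,x_{i+1}^n\}$ converge to midpoints $m_i$ of $\{x_i,x_{i+1}\}$. The limits satisfy $d(x_i,x_{i+1})\le\varepsilon$, and I split into two cases. If some $d(x_i,x_{i+1})<\varepsilon$, then $L(\alpha_n)\to\sum_i d(x_i,x_{i+1})<3\varepsilon$, so for $n$ large $\alpha_n$ is an $\varepsilon$-loop of length strictly less than $3\varepsilon$ and is therefore $\varepsilon$-null by Corollary \ref{three}---contradicting the choice of $\alpha_n$. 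Otherwise $d(x_i,x_{i+1})=\varepsilon$ for every $i$, so $T:=\{x_0,x_1,x_2\}$ is an $\varepsilon$-triad whose midpoint refinement $\alpha=\{x_0,m_0,x_1,m_1,x_2,m_2,x_0\}$ satisfies $E(\alpha)=\varepsilon/2$. Pre- and post-concatenating $\alpha_n$ with the short chain $\{x_0,x_0^n\}$ (an $\varepsilon$-chain for $n$ large) produces a nine-point chain $\widetilde{\alpha}_n$ from $x_0$ to $x_0$ that is $\varepsilon$-homotopic to $\alpha_n$ up to a free basepoint shift. Padding $\alpha$ with a duplicate $x_0$ at each end yields $\alpha'$, which is $\varepsilon$-homotopic to $\alpha$ by trivial basic moves and satisfies $\Delta(\widetilde{\alpha}_n,\alpha')\to 0$ and $E(\alpha')=\varepsilon/2$. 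For $n$ large $\Delta<\varepsilon/4$, so Proposition \ref{close} makes $\widetilde{\alpha}_n$ and $\alpha'$ $\varepsilon$-homotopic; chasing the chain of $\varepsilon$-homotopies back shows $\alpha$ is not $\varepsilon$-null, hence $T$ is essential, and $\alpha_n$ is $\varepsilon$-homotopic to the midpoint refinement of an essential $\varepsilon$-triad---again a contradiction.

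The main technical obstacle is the bookkeeping in the second case: reconciling the moving basepoints $x_0^n$ with the fixed basepoint $x_0$ of the limit triad so that Proposition \ref{close} applies on chains of matching length and endpoints. The short-chain conjugation and padding sketched above handle this, and one could equivalently phrase everything in terms of free $\varepsilon$-homotopy, the notion already used to define equivalence of essential triads. The hypothesis that $\varepsilon$ be a homotopy critical value plays no explicit logical role in the argument---it just ensures that essential $\varepsilon$-triads exist, so that the conclusion is not vacuous.
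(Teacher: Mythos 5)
Your proof is correct and follows essentially the same route as the paper's: assume the statement fails, extract a convergent sequence of counterexample loops $\delta_n\searrow\varepsilon$, and use Proposition \ref{close} together with Corollary \ref{three} to show the limit is a midpoint refinement of an essential $\varepsilon$-triad to which the $\alpha_n$ are (freely) $\varepsilon$-homotopic. You in fact supply details the paper leaves implicit, namely the case split showing the limit sides all have length exactly $\varepsilon$ and the basepoint/padding bookkeeping needed to apply Proposition \ref{close}.
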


\begin{proof}
If the statement were not true then there would exist $\left( \varepsilon + 
\frac{1}{i}\right) $-small loops $\{x_{i},y_{i},z_{i},x_{i}\}$ having
midpoint subdivision chains $\mu_{i}=%
\{x_{i},m_{i},y_{i},n_{i},z_{i},p_{i},x_{i}\}$ that are not $\varepsilon $%
-null but are not $\varepsilon $-homotopic to a midpoint refinement of an
essential $\varepsilon $-triad. By taking subsequences if necessary, we may
suppose that $\{x_{i},m_{i},y_{i},n_{i},z_{i},p_{i},x_{i}\}\rightarrow
\{x,m,y,n,z,p,x\}$, where $\{x,y,z,x\}$ is an $\varepsilon $-small loop
which is not $\varepsilon $-null. Hence $\{x,y,z\}$ must be an essential $%
\varepsilon $-triad. Moreover, for large $i$, $%
\{x_{i},m_{i},y_{i},n_{i},z_{i},p_{i},x_{i}\}$ is $\varepsilon $-homotopic
to a midpoint subdivision of $\{x,y,z,x\}$, a contradiction.
\end{proof}

\begin{definition}
\label{lollipop} In a geodesic space $X$, we will call a path (resp. $%
\varepsilon $-chain) of the form $k\ast c\ast \overline{k}$, where $k$ is a
path (resp. $\varepsilon $-chain), $\overline{k}$ is its reversal, and $c$
is an arclength parameterization of an essential circle (resp. a midpoint
refinement of an essential $\varepsilon$-triad), a lollipop (resp. $%
\varepsilon $-lollichain). If the path $k$ in the lollipop is locally length
minimizing (possibly not minimal) then we call the lollipop a geodesic
lollipop.
\end{definition}

Note that for fixed $\varepsilon $, the homomorphism $\Lambda
:\pi_{1}(X)\rightarrow \pi _{\varepsilon }(X)$ maps classes of lollipops
determined by essential $\varepsilon $-circles to $\varepsilon $%
-lollichains. In fact, if $c$ is an essential $\varepsilon $-circle
determined by an essential $\varepsilon $-triad $T$ then we can choose a
midpoint refinement $\beta $ of $\alpha _{T}$ such that each point of $\beta 
$ lies on $c$. By definition, $[c]$ is mapped via $\Lambda $ to $%
[\beta]_{\varepsilon }$. Conversely, if $\beta $ is a midpoint refinement of
an essential $\varepsilon $-triad $T$, we can define an essential $%
\varepsilon $-circle $c$ containing $\beta $ by joining the points of $\beta 
$ by geodesics. Then $\Lambda $ will map $[c]$ to $[\beta ]_{\varepsilon }$.
Anchoring the essential circles to the base point by adjoining paths $k$ and
choosing corresponding $\varepsilon $-chains $\kappa $ so that $%
[\kappa]_{\varepsilon }=\Lambda ([k])$ yields the full conclusion.

\begin{theorem}
\label{circles}Let $X$ be a compact geodesic space, $0<\varepsilon <\delta
\leq \infty $, and $\mathcal{T}$ any (possibly empty!) collection that
contains a representative for every essential $\tau $-triad with $%
\varepsilon \leq \tau <\delta $. Then $\ker \theta _{\delta \varepsilon
}=K_{\varepsilon }(\mathcal{T})$. Consequently, $\pi _{\delta
}(X)=\pi_{\varepsilon }(X)/K_{\varepsilon }(\mathcal{T})$, and the covering
map $\phi _{\delta \varepsilon }:X_{\varepsilon }\rightarrow X_{\delta }$ is
equivalent to the quotient covering map $\pi :X_{\varepsilon }\rightarrow
X_{\varepsilon }/K_{\varepsilon }(\mathcal{T})$.
\end{theorem}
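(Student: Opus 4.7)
The plan is to prove both inclusions, with the reverse being the substantive direction. For the easy direction $K_\varepsilon(\mathcal{T}) \subseteq \ker \theta_{\delta\varepsilon}$, take a generator $[\alpha \ast T' \ast \bar\alpha]_\varepsilon$ where $T$ is an essential $\tau$-triad with $\varepsilon \leq \tau < \delta$ and $T'$ is an $\varepsilon$-refinement of $\alpha_T$. Then $\alpha_T$ is a $\delta$-loop of length $3\tau < 3\delta$, so by Corollary \ref{three} it is $\delta$-null; since $T'$ is $\varepsilon$-homotopic (hence $\delta$-homotopic) to $\alpha_T$, it is $\delta$-null as well, and conjugation by $\alpha$ in $\pi_\delta(X)$ preserves this.

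For the reverse inclusion I induct on the number $n$ of homotopy critical values of $X$ lying in $[\varepsilon, \delta)$ (finite by discreteness of the critical spectrum). The base case $n = 0$ is immediate: Lemma \ref{nocrit} makes $\theta_{\delta\varepsilon}$ injective, and $\mathcal{T}$ contains no essential triads, so both sides are trivial. For the inductive step, let $\tau_n$ be the largest critical value in $[\varepsilon, \delta)$. Using Lemma \ref{nocrit} again, I replace $\delta$ by some $\delta^+ \in (\tau_n, \delta]$ having no critical values in $(\tau_n, \delta^+)$ (so that neither $\ker \theta_{\delta\varepsilon}$ nor $\mathcal{T}$ changes) and small enough that Proposition \ref{delta} applies at $\tau_n$ with $\delta^+$ playing the role of its ``$\delta$''. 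Given $[\lambda]_\varepsilon \in \ker \theta_{\delta^+ \varepsilon}$, Proposition \ref{deltaimp} (with trivial target chain) exhibits $[\lambda]_\varepsilon$ in $\pi_\varepsilon(X)$ as a product of $\varepsilon$-classes of $\varepsilon$-refinements $\nu_j = \beta_j' \ast \sigma_j' \ast \bar\beta_j'$ of $\delta^+$-small loops based at $*$; it suffices to show each $[\nu_j]_\varepsilon \in K_\varepsilon(\mathcal{T})$.

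Fix such $\nu = \beta' \ast \sigma' \ast \bar\beta'$ with triangle basepoint $y_0$, and assume $\nu$ is not $\varepsilon$-null. If $\sigma'$ is $\tau_n$-null (equivalently $\nu$ is $\tau_n$-null), then $[\nu]_\varepsilon \in \ker \theta_{\tau_n \varepsilon}$; the inductive hypothesis, which applies since $[\varepsilon, \tau_n)$ contains only $n-1$ critical values, places $[\nu]_\varepsilon$ in $K_\varepsilon(\mathcal{T}_-) \subseteq K_\varepsilon(\mathcal{T})$, where $\mathcal{T}_-\subseteq \mathcal{T}$ is a representative set for essential triads in $[\varepsilon, \tau_n)$. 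Otherwise, Proposition \ref{delta} produces an essential $\tau_n$-triad $T_0$ (equivalent to some representative in $\mathcal{T}$) and a short $\varepsilon$-chain $\kappa$ from $y_0$ to a vertex of $T_0$ such that $\sigma'$ is $\tau_n$-homotopic to $\kappa \ast T_0' \ast \bar\kappa$ for some $\varepsilon$-refinement $T_0'$ of $T_0$; conjugating by $\beta'$, the loop $\nu$ is $\tau_n$-homotopic to $(\beta' \ast \kappa) \ast T_0' \ast \overline{(\beta' \ast \kappa)}$ at basepoint $*$. A second application of Proposition \ref{deltaimp}, now at scales $\varepsilon \leq \tau_n$, writes the difference $[\nu]_\varepsilon \cdot [(\beta' \ast \kappa) \ast T_0' \ast \overline{(\beta' \ast \kappa)}]_\varepsilon^{-1}$ as a product in $\pi_\varepsilon(X)$ of $\varepsilon$-classes of $\varepsilon$-refinements of $\tau_n$-small loops $\eta \ast \pi \ast \bar\eta$ based at $*$. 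Each such loop is $\tau_n$-null because the triangle $\pi$ has length below $3\tau_n$ and is thus $\tau_n$-null by Corollary \ref{three}, whence $\eta \ast \pi \ast \bar\eta$ is $\tau_n$-null as well; the inductive hypothesis then places each such $\varepsilon$-class in $K_\varepsilon(\mathcal{T}_-)$. Hence $[\nu]_\varepsilon$ equals the generator $[(\beta' \ast \kappa) \ast T_0' \ast \overline{(\beta' \ast \kappa)}]_\varepsilon$ of $K_\varepsilon(\mathcal{T})$ times an element of $K_\varepsilon(\mathcal{T})$, as required. The consequential statements about $\pi_\delta(X)$ and the equivalence of covering maps then follow from the first isomorphism theorem together with the description of $\phi_{\delta\varepsilon}$ as the quotient of $X_\varepsilon$ by its deck group.

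The main obstacle is the scale mismatch in the second subcase: Proposition \ref{delta} supplies only a $\tau_n$-homotopy, whereas the subgroup membership being sought lives at scale $\varepsilon$. One must therefore re-invoke Proposition \ref{deltaimp} to translate that $\tau_n$-homotopy down to scale $\varepsilon$ and verify that every resulting error term --- the $\varepsilon$-refinement of each intermediate $\tau_n$-small loop --- is itself captured by the inductively controlled subgroup via Corollary \ref{three}. Anchoring the essential $\tau_n$-triad produced by Proposition \ref{delta} (which appears near $y_0$) back to the global basepoint $*$ is additional bookkeeping but is handled uniformly by the normality of $K_\varepsilon(\mathcal{T})$.
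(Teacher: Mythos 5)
Your proposal is correct in substance and rests on the same three pillars as the paper's proof --- Proposition \ref{deltaimp} to decompose a $\delta$-null loop into refinements of small loops, Proposition \ref{delta} to recognize the non-null pieces as essential triads, and Lemma \ref{nocrit}/Corollary \ref{three} plus discreteness of the critical spectrum to organize the recursion --- but the two arguments bridge the scale gap between $\varepsilon$ and the top critical value $\tau_n$ differently. The paper first proves the statement in the special case where $\varepsilon$ \emph{is} the critical value and $\delta$ is just above it, so that Proposition \ref{delta} and the equivalence relation on triads both live at the same scale as the group; it then transports the conclusion to smaller $\varepsilon$ by observing that $\theta_{\tau\varepsilon}$ is an isomorphism carrying $K_{\varepsilon}(\mathcal{T})$ onto $K_{\tau}(\mathcal{T})$, and iterates this from the top critical value downward. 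You instead stay at scale $\varepsilon$ throughout and re-invoke Proposition \ref{deltaimp} a second time to convert the $\tau_n$-homotopy supplied by Proposition \ref{delta} into an identity in $\pi_{\varepsilon}(X)$, with every error term being an $\varepsilon$-refinement of a $\tau_n$-small (hence $\tau_n$-null) loop that the inductive hypothesis places in $K_{\varepsilon}(\mathcal{T}_{-})$. This is a legitimate alternative: it trades the paper's clean isomorphism transfer for an explicit absorption mechanism, and it makes the induction on the number of critical values formal rather than implicit.

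Two bookkeeping points deserve the same treatment you give the main homotopy, and you pass over them a little quickly. First, Proposition \ref{delta} produces \emph{some} essential $\tau_n$-triad $T_0$, whereas $K_{\varepsilon}(\mathcal{T})$ is generated only by refinements of the chosen representatives in $\mathcal{T}$; the equivalence of $T_0$ with its representative is witnessed by a free $\tau_n$-homotopy, not an $\varepsilon$-homotopy (cf.\ Remark \ref{comments}), so you must run your second-Proposition-\ref{deltaimp} absorption argument once more to replace $T_0'$ by a refinement of the representative, up to elements of $K_{\varepsilon}(\mathcal{T}_{-})$. Second, the refinement of $\alpha_{T_0}$ coming out of Proposition \ref{delta} is a midpoint refinement, which is an $\varepsilon$-refinement only when $\tau_n<2\varepsilon$; in general you must further refine it, and the discrepancy is again only a $\tau_n$-homotopy to be absorbed the same way. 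Neither point breaks the proof --- your mechanism handles both --- but they should be stated, since they are exactly the subtleties the paper's route through the isomorphisms $\theta_{\tau\varepsilon}$ is designed to avoid.
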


\begin{proof}
First of all, note that the inequality $\tau <\delta $ shows that each
element of $K_{\varepsilon }(\mathcal{T})$ is $\delta $-null and hence $%
K_{\varepsilon }(\mathcal{T})\subset \ker \theta _{\delta \varepsilon }$.
For the opposite inclusion, let $[\lambda ]_{\varepsilon }\in \ker \theta
_{\delta \varepsilon }$, meaning that $\lambda $ is $\delta $-null. We will
start with the case when $\varepsilon $ is a homotopy critical value of $X$
and $\delta >\varepsilon $ is close enough to $\varepsilon $ that $\delta
<2\varepsilon $ and Proposition \ref{delta} is valid: whenever $%
\{x_{0},x_{1},x_{2},x_{0}\}$ is $\delta $-small with a midpoint refinement $%
\alpha $ that is not $\varepsilon $-null then $\alpha $ is $\varepsilon $%
-homotopic to a midpoint refinement of an essential $\varepsilon $-triad. By
Proposition \ref{deltaimp} $\lambda $ is $\varepsilon $-homotopic to a
product of midpoint refinements $\lambda _{i}$ of $\delta $-small loops.
Since Proposition \ref{delta} holds, each $\lambda _{i}$ is either $%
\varepsilon $-null or $\varepsilon $-homotopic to a non-null $\varepsilon $%
-lollichain. That is, $[\lambda ]_{\varepsilon }\in K_{\varepsilon }(%
\mathcal{T})$.

Next, observe that if there are no homotopy critical values $\tau $ with $%
\varepsilon \leq \tau <\delta $, then on the one hand $\mathcal{T}$ must be
empty, and on the other hand, $\theta _{\delta \varepsilon }$ is an
isomorphism so its kernel is trivial, and we are finished. Suppose now that
there is a single critical value $\tau $ between $\varepsilon $ and $\delta $%
, which, by the previous case, may be assumed to satisfy $\varepsilon \leq
\tau <\delta $. We may choose $\delta _{1}$ with $\tau <\delta _{1}<\delta $
satisfying the requirement of the special case proved in the first paragraph
to obtain that $\ker \theta _{\delta _{1}\tau }=K_{\tau }(\mathcal{T})$. Now
both $\theta _{\tau \varepsilon }$ and $\theta _{\delta \delta _{1}}$ are
isomorphisms, and $\theta _{\delta \varepsilon }=\theta _{\delta
\delta_{1}}\circ \theta _{\delta _{1}\tau }\circ \theta _{\tau \varepsilon }$
by definition. Therefore, 
\begin{equation*}
\ker \theta _{\delta \varepsilon }=\theta _{\tau \varepsilon }^{-1}(\ker
\theta _{\delta _{1}\tau }\mathcal{)=}\theta _{\tau
\varepsilon}^{-1}(K_{\tau }(\mathcal{T))=}K_{\varepsilon }(\mathcal{T})\text{%
.}
\end{equation*}

For the general case we have $\varepsilon \leq \varepsilon _{i}<\cdot \cdot
\cdot <\varepsilon _{j}<\delta :=\varepsilon _{j+1}$ where $%
\{\varepsilon_{i},...,\varepsilon _{j}\}$ is the set of all homotopy
critical values between $\varepsilon $ and $\delta $, which has at least two
elements. For $i\leq k\leq j$, let $\mathcal{T}_{k}$ be the set of all $%
\varepsilon _{k}$-triads in $\mathcal{T}$, so that $\mathcal{T=\cup T}_{k}$.
By the previous case we have for all $k$ that $\ker \theta _{\varepsilon
_{k+1}\varepsilon_{k}}=K_{\varepsilon _{k}}(\mathcal{T}_{k})$. If $[\lambda
]_{\varepsilon}\in \ker \theta _{\delta \varepsilon }=\ker \theta
_{\varepsilon_{j+1}\varepsilon }$ then $x:=\theta _{\varepsilon
_{j}\varepsilon}([\lambda ]_{\varepsilon })\in \ker \theta _{\varepsilon
_{j+1}\varepsilon_{j}}=K_{\varepsilon _{j}}(\mathcal{T}_{j})$, so we may
write $x$ as a finite product $\Pi _{r}[\beta _{r}^{j}]_{\varepsilon _{j}}$
with where each $\beta _{r}^{j}$ is an $\varepsilon _{j}$-lollichain made
from an $\varepsilon _{j}$-refinement of some element of $\mathcal{T}_{j}$.
Now let $\lambda _{r}^{j}$ be any $\varepsilon $-refinement of $\beta
_{r}^{j}$. By definition, $[\lambda _{r}^{j}]_{\varepsilon }\in K(\mathcal{T}%
)$ for all $r$, and $\theta _{\varepsilon _{j}\varepsilon
}([\lambda_{r}^{j}]_{\varepsilon })=[\beta _{r}^{j}]_{\varepsilon _{j}}$.
Therefore we may write $[\lambda ]_{\varepsilon }=[\lambda _{j}^{\prime
}]_{\varepsilon}\left( \Pi _{r}[\lambda _{r}^{j}]_{\varepsilon }\right) $
for some $[\lambda _{j}^{\prime }]_{\varepsilon }\in \ker \theta
_{\varepsilon \varepsilon _{j}} $. Since $[\lambda _{j}^{\prime
}]_{\varepsilon }\in \ker \theta _{\varepsilon \varepsilon _{j}}$ we may
repeat the same argument to write $[\lambda _{j}^{\prime }]_{\varepsilon }$
as a product of some $[\lambda _{j}^{\prime \prime }]_{\varepsilon }\in \ker
\theta _{\varepsilon\varepsilon _{j-1}}$ and a finite product of elements of 
$K(\mathcal{T)}$ (consisting of $\varepsilon $-lollichains formed using $%
\varepsilon $-refinements of elements of $\mathcal{T}_{j-1}$). After
finitely many iterations of this argument we obtain that $[\lambda
]_{\varepsilon }\in K( \mathcal{T})$.
\end{proof}

\vspace{.1 in} It is a straightforward extension of a theorem of E. Cartan
that in a compact semi-locally simply connected geodesic space, every path
contains a shortest path in its fixed-endpoint homotopy class, and that path
is a locally minimizing geodesic. Likewise, every path loop contains a
shortest element in its free homotopy class, and this curve is a closed
geodesic. (This is not true in general without semilocal simple
connectivity.) Consequently, in such spaces the fundamental group is
generated by homotopy classes of loops of the form $\alpha\ast c\ast \bar{%
\alpha} $, where $c$ is a closed geodesic that is shortest in its homotopy
class, and $\alpha $ is a locally minimizing geodesic. Now in the case $%
\delta =\infty $ in Theorem \ref{circles}, since we know from \cite{PW} that 
$\pi _{\varepsilon }(X)$ is finitely generated, we obtain that $%
\pi_{\varepsilon }(X)$ is generated by finitely many $\varepsilon $%
-lollichains. If $X$ is semilocally simply connected then for $\varepsilon $
small enough, $\pi _{\varepsilon }(X)$ is isomorphic to $\pi _{1}(X)$ via
the map $\Lambda:\pi_1(X) \rightarrow \pi_{\varepsilon}(X)$ (c.f. \cite{PW}
or \cite{W}). By the discussion following Definition \ref{lollipop},
applying $\Lambda^{-1}$ to a basis of $\varepsilon$-lollichains give a basis
of lollipops for $\pi _{1}(X)$, and we may replace any lollipop by a
geodesic lollipop, up to homotopy equivalence. We thus have shown:

\begin{theorem}
\label{newbase}Let $X$ be a compact geodesic space. Then $\pi
_{\varepsilon}(X)$ is either trivial or is generated by a finite collection $%
[\lambda_{1}]_{\varepsilon },...,[\lambda _{n}]_{\varepsilon }$, where each $%
\lambda_{i}$ is an $\varepsilon $-refinement of a $\delta $-lollichain with $%
\delta \geq \varepsilon $. In particular, if $X$ is semilocally simply
connected and not simply connected then $\pi _{1}(X)$ is generated by a
finite collection of equivalence classes of geodesic lollipops.
\end{theorem}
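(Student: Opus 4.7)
The plan is to derive Theorem \ref{newbase} as a direct consequence of Theorem \ref{circles} applied in the boundary case $\delta = \infty$, combined with the finite generation result for $\pi_\varepsilon(X)$ from \cite{PW} and, for the second part, the standard stabilization of $\Lambda$ in the semilocally simply connected setting.

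First, I would specialize Theorem \ref{circles} to $\delta = \infty$. Under the identification $X_\infty = X$ in the preceding Notation, every $\varepsilon$-chain is $\infty$-null, so $\theta_{\infty \varepsilon}$ is the trivial homomorphism and $\ker\theta_{\infty\varepsilon} = \pi_\varepsilon(X)$. Choose $\mathcal{T}$ to contain a representative of every essential $\tau$-triad with $\varepsilon \le \tau < \infty$; Theorem \ref{circles} then yields $\pi_\varepsilon(X) = K_\varepsilon(\mathcal{T})$. By Definition \ref{circle cover}, this group is generated by the (possibly infinite) set $\Gamma_\varepsilon(\mathcal{T})$ consisting of all classes $[\alpha \ast T' \ast \overline{\alpha}]_\varepsilon$, where $T \in \mathcal{T}$ is a $\tau$-triad for some $\tau \ge \varepsilon$ and $T'$ is an $\varepsilon$-refinement of $T$. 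Each such element is, by Definition \ref{lollipop}, precisely an $\varepsilon$-refinement of a $\tau$-lollichain, so every generator in $\Gamma_\varepsilon(\mathcal{T})$ has the form demanded by the theorem.

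Second, I would invoke the finite generation of $\pi_\varepsilon(X)$ established in \cite{PW} (Theorem 11). The standard group-theoretic fact that a finitely generated group which is also generated by a set $S$ must already be generated by a finite subset of $S$ then lets me extract finitely many classes $[\lambda_1]_\varepsilon, \ldots, [\lambda_n]_\varepsilon$ from $\Gamma_\varepsilon(\mathcal{T})$ that suffice; indeed, write each of finitely many known generators of $\pi_\varepsilon(X)$ as a word in $\Gamma_\varepsilon(\mathcal{T})$ and take the union of the (finitely many) letters that appear. This proves the first assertion.

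For the second assertion, I would assume $X$ is semilocally simply connected and not simply connected. Then for all sufficiently small $\varepsilon > 0$, the homomorphism $\Lambda : \pi_1(X) \to \pi_\varepsilon(X)$ is an isomorphism (see \cite{PW} or \cite{W}). The discussion following Definition \ref{lollipop} explicitly identifies $\Lambda^{-1}([\lambda_i]_\varepsilon)$ with the homotopy class of a lollipop $k \ast c \ast \overline{k}$, where $c$ is an essential circle corresponding to the triad underlying $\lambda_i$ and $k$ is a path whose subdivision chain matches the stem of $\lambda_i$. Because fixed-endpoint and free homotopy classes always contain locally minimizing representatives in this setting (the Cartan-type theorem cited just before the statement), I can replace each $k \ast c \ast \overline{k}$ by a geodesic lollipop without changing its class in $\pi_1(X)$, yielding a finite generating collection of geodesic lollipop classes.

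The only mildly subtle point, and what I would highlight as the main obstacle, is verifying that the generating set $\Gamma_\varepsilon(\mathcal{T})$ is rich enough that the extracted finite subset still has the claimed structural form (each element being an $\varepsilon$-refinement of a $\delta$-lollichain). This is immediate from Definition \ref{circle cover} once one observes that $K_\varepsilon(\mathcal{T})$ is defined as the group generated by $\Gamma_\varepsilon(\mathcal{T})$ itself, rather than by words in it, so every member of a finite subset of $\Gamma_\varepsilon(\mathcal{T})$ automatically has the correct lollichain form. Everything else is assembly of the cited pieces.
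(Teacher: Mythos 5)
Your proposal is correct and follows essentially the same route as the paper: specialize Theorem \ref{circles} to $\delta=\infty$ to get $\pi_\varepsilon(X)=K_\varepsilon(\mathcal{T})$, use the finite generation of $\pi_\varepsilon(X)$ from \cite{PW} to extract finitely many lollichain generators, and then transport these through the isomorphism $\Lambda$ to geodesic lollipops in the semilocally simply connected case. The only difference is that you make explicit the routine group-theoretic step of extracting a finite generating subset of $\Gamma_\varepsilon(\mathcal{T})$, which the paper leaves implicit.
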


\noindent Recalling that the notion of essential circle is stronger than
just being a non-null, closed geodesic that is shortest in its homotopy
class, we see that Theorem \ref{newbase} is stronger than Cartan's result
even in the Riemannian case. In fact, Example 44 of \cite{PW} shows that
such closed geodesics need not be essential circles even when they are
shortest in their homotopy class in a Riemannian manifold.

We conjecture that there is always a collection of $\delta $-lollichains
that gives a generating set of $\pi _{\varepsilon }(X)$ having minimal
cardinality.

\begin{proposition}
\label{T}Let $X$ be a compact geodesic space, $0<\delta <\varepsilon $, and $%
\mathcal{T}$ be a collection of essential $\tau $-triads such that $\tau
\geq \varepsilon $ for each element of $\mathcal{T}$. Let $\mathcal{S}=%
\mathcal{T\cup T}^{\prime }$, where $\mathcal{T}^{\prime }$ consists of one
representative of each essential $\tau $-triad with $\delta \leq \tau
<\varepsilon $. Then

\begin{enumerate}
\item The covering map $\phi _{\varepsilon }^{\mathcal{T}}:X_{\varepsilon }^{%
\mathcal{T}}\rightarrow X$ is isometrically equivalent to $\phi _{\delta }^{%
\mathcal{S}}:X_{\delta }^{\mathcal{S}}\rightarrow X$ and

\item $\pi _{\varepsilon }^{\mathcal{T}}(X)$ is isomorphic to $\pi
_{\delta}^{\mathcal{S}}(X)$.
\end{enumerate}
\end{proposition}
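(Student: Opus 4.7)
My plan is to realize $\phi_\delta^{\mathcal{S}}$ as a two-stage quotient that factors through $X_\varepsilon$. Since $\mathcal{T}'$ contains a representative of every essential $\tau$-triad with $\delta \leq \tau < \varepsilon$, Theorem \ref{circles} identifies the covering map $\phi_{\varepsilon\delta}:X_\delta \to X_\varepsilon$ with the quotient $X_\delta \to X_\delta/K_\delta(\mathcal{T}')$, and identifies $\theta_{\varepsilon\delta}$ with the quotient homomorphism $\pi_\delta(X) \to \pi_\delta(X)/K_\delta(\mathcal{T}')$. Composing with the further quotient $X_\varepsilon \to X_\varepsilon/K_\varepsilon(\mathcal{T}) = X_\varepsilon^{\mathcal{T}}$ and invoking Proposition \ref{quote}, I can rewrite $X_\varepsilon^{\mathcal{T}}$ isometrically as $X_\delta/G$, where $G := \theta_{\varepsilon\delta}^{-1}(K_\varepsilon(\mathcal{T}))$ is a normal subgroup of $\pi_\delta(X)$, with the corresponding group isomorphism $\pi_\varepsilon^{\mathcal{T}}(X) \cong \pi_\delta(X)/G$. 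Both conclusions of the proposition will follow once I show $G = K_\delta(\mathcal{S})$.

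For the inclusion $K_\delta(\mathcal{S}) \subseteq G$, I check each type of generator. A generator of the form $[\beta \ast T^\delta \ast \overline{\beta}]_\delta$ coming from $T \in \mathcal{T}'$, a $\tau$-triad, has $\tau < \varepsilon$, so the $\delta$-refinement $T^\delta$ has length $3\tau < 3\varepsilon$ and the associated $\varepsilon$-loop is $\varepsilon$-null by Corollary \ref{three}; hence its class lies in $\ker\theta_{\varepsilon\delta} \subset G$. If instead $T \in \mathcal{T}$ is a $\tau$-triad with $\tau \geq \varepsilon$, any $\delta$-refinement of $T$ is automatically an $\varepsilon$-refinement, so $\theta_{\varepsilon\delta}$ sends such a $\delta$-generator to a generator of $K_\varepsilon(\mathcal{T})$, placing it in $G$.

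For the reverse inclusion, given $[\lambda]_\delta \in G$ I write $[\lambda]_\varepsilon = \prod_i [\alpha_i \ast T'_i \ast \overline{\alpha_i}]_\varepsilon$ with $T_i \in \mathcal{T}$ and $T'_i$ some $\varepsilon$-refinement of $T_i$. For each $i$ I pick $\delta$-refinements $\alpha_i^\delta$ of $\alpha_i$ and $T_i^\delta$ of $T'_i$, where $T_i^\delta$ is obtained by inserting midpoints along the same geodesic segments already used to build $T'_i$. This choice makes $T_i^\delta$ simultaneously a $\delta$-refinement of the original triad $T_i$, so $[\eta_i]_\delta := [\alpha_i^\delta \ast T_i^\delta \ast \overline{\alpha_i^\delta}]_\delta$ lies in $K_\delta(\mathcal{T}) \subseteq K_\delta(\mathcal{S})$, and simultaneously an $\varepsilon$-refinement of the $\varepsilon$-chain $T'_i$, so that $\theta_{\varepsilon\delta}([\eta_i]_\delta) = [\alpha_i \ast T'_i \ast \overline{\alpha_i}]_\varepsilon$. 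Multiplying these lifts together and correcting by an element of $\ker\theta_{\varepsilon\delta} = K_\delta(\mathcal{T}') \subseteq K_\delta(\mathcal{S})$ places $[\lambda]_\delta$ in $K_\delta(\mathcal{S})$.

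The main obstacle is the subtlety flagged in Remark \ref{comments}: distinct $\varepsilon$-refinements of a single $\tau$-triad with $\tau > \varepsilon$ need not be $\varepsilon$-homotopic, so one cannot freely replace $T'_i$ by a different $\varepsilon$-refinement of $T_i$. The fix is to work with common refinements, picking $T_i^\delta$ as a single chain that is simultaneously a $\delta$-refinement of $T_i$ (needed for membership in $K_\delta(\mathcal{T})$) and of $T'_i$ (needed for matching the given generator of $K_\varepsilon(\mathcal{T})$ on the $\varepsilon$-level); this is what allows the two bookkeepings to be reconciled and pins down $G = K_\delta(\mathcal{S})$.
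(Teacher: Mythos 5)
Your proof is correct and follows essentially the same route as the paper's: both reduce to Proposition \ref{quote} via Theorem \ref{circles}'s identification $\ker\theta_{\varepsilon\delta}=K_{\delta}(\mathcal{T}')$, and your identity $\theta_{\varepsilon\delta}^{-1}(K_{\varepsilon}(\mathcal{T}))=K_{\delta}(\mathcal{S})$ is equivalent to the paper's $\theta_{\varepsilon\delta}(K_{\delta}(\mathcal{S}))=K_{\varepsilon}(\mathcal{T})$ once one notes $\ker\theta_{\varepsilon\delta}\subseteq K_{\delta}(\mathcal{S})$. Your common-refinement argument is a welcome justification of the containment $K_{\varepsilon}(\mathcal{T})\subseteq\theta_{\varepsilon\delta}(K_{\delta}(\mathcal{T}))$, which the paper asserts without comment.
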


\begin{proof}
We will use Proposition \ref{quote}. First observe that $K_{\delta }(%
\mathcal{T}^{\prime })$, as a normal subgroup of $\pi _{\delta }(X)$, is a
normal subgroup of $K_{\delta }(\mathcal{S)}$. By Theorem \ref{circles}, $%
\ker \theta _{\varepsilon \delta }=K_{\delta }(\mathcal{T}^{\prime })$ and
therefore we may identify the action of $\pi _{\varepsilon }(X)$ on $%
X_{\varepsilon }$ with the action of $K_{\delta }(\mathcal{T)}/K_{\delta }(%
\mathcal{T}^{\prime })$ on $X_{\delta }/K_{\delta }(\mathcal{T}%
^{\prime})=X_{\varepsilon }$. In order to apply Proposition \ref{quote} and
finish the proof of the first part, we need to show that $\theta
_{\varepsilon\delta }(K_{\delta }(\mathcal{S}))=K_{\varepsilon }(\mathcal{T)}
$. Since $\mathcal{T\subset S}$, $K_{\varepsilon }(\mathcal{T)=}\theta
_{\varepsilon\delta }(K_{\delta }(\mathcal{T}))\subset \theta _{\varepsilon
\delta}(K_{\delta }(\mathcal{S}))$ . On the other hand, let $%
[\lambda]_{\varepsilon }\in \theta _{\varepsilon \delta }(K_{\delta }(%
\mathcal{S}))$. By definition, $[\lambda ]_{\varepsilon }=[\lambda
_{1}]_{\varepsilon}\ast \cdot \cdot \cdot \ast \lbrack \lambda
_{k}]_{\varepsilon }$, where each $\lambda _{i}=\alpha _{i}\ast \beta
_{i}\ast \overline{\alpha _{i}}$, $\alpha _{i}$ is a $\delta $ -chain and
each $\beta _{i}$ is either in $\mathcal{T}$ or $\mathcal{T}^{\prime }$. But
if $\beta _{i}\in \mathcal{T}^{\prime }$ then $\beta _{i}$ is $\varepsilon $%
-null, so $[\lambda _{i}]_{\varepsilon }$ is trivial and we may therefore
eliminate $[\lambda_{i}]_{\varepsilon }$ from the product. The remaining
terms are all in $K_{\varepsilon }(\mathcal{T)}$.

For the second part, note that we have shown both $\ker \theta
_{\varepsilon\delta }=K_{\delta }(\mathcal{T}^{\prime })\subset K_{\delta }(%
\mathcal{S)}$ and $\theta _{\varepsilon \delta }(K_{\delta }(\mathcal{S}%
))=K_{\varepsilon}(\mathcal{T)}$. Therefore, from a basic theorem in algebra
we may conclude that $\pi _{\varepsilon }^{\mathcal{T}}(X)=\pi
_{\varepsilon}(X)/K_{\varepsilon }(\mathcal{T)}$ is isomorphic to $\pi
_{\delta}(X)/K_{\delta }(\mathcal{S)=\pi }_{\delta }^{\mathcal{S}}(X)$.
\end{proof}

\section{Gromov-Hausdorff Convergence}

\begin{definition}
\label{scaled}Suppose $f:X\rightarrow Y$ is a function between metric spaces
and $\sigma $ is a first degree polynomial with non-negative coefficients.
We say that $f$ is a $\sigma $-isometry if for all $x,y\in X$, $z\in Y$,

\begin{enumerate}
\item[\emph{1.}] $\left\vert d(x,y)-d(f(x),f(y))\right\vert \leq \sigma
(d(x,y))$ and

\item[\emph{2.}] $d(z,f(w))\leq \sigma (0)$ for some $w\in X$.
\end{enumerate}

\noindent We refer to the first condition as \textquotedblleft distortion at
most $\sigma $\textquotedblright.
\end{definition}

If $\sigma =0$ then a $\sigma $-isometry is an isometry, and if $%
\sigma=\varepsilon >0$ is constant then Definition \ref{scaled} agrees with
the notion of an $\varepsilon $-isometry given in \cite{BB}. If $X$ and $Y$
are compact of diameter at most $R$, then any $\sigma $-isometry is a $%
\sigma(R) $-isometry. In fact, if $X$ and $Y$ are compact, $\sigma $ is
constant, and $d$ denotes the Gromov-Hausdorff distance, then $%
d(X,Y)<2\sigma $ if there is a $\sigma $-isometry $f:X\rightarrow Y$, and
such a $\sigma $-isometry exists if $d(X,Y)<\frac{\sigma }{2}$ (Corollary
7.3.28, \cite{BB}). In other words, for purposes involving convergence of
compact spaces we might as well use constant functions $\sigma $. However,
our extended definition is needed to study the induced mapping $%
f_{\#}:X_{\delta}\rightarrow Y_{\varepsilon }$ since $X_{\sigma }$ and $%
Y_{\varepsilon }$ are not generally compact even when $X$ and $Y$ are.

\begin{remark}
\label{quasi}Recall that a quasi-isometry (c.f. \cite{BB}) is a map $%
f:X\rightarrow Y$ such that $f(X)$ is a $D$-net in $Y$ for some $D>0$ (i.e.
for every $y\in Y$ we have $d(y,f(x))<D$ for some $x\in X$) and $\frac{1}{%
\lambda }d(x,y)-C\leq d(f(x),f(y))\leq \lambda d(x,y)+C$ for all $x,y$ and
some constants $\lambda \geq 1$, $C>0$. If $\sigma (x)=mx+b$ then a $\sigma $%
-isometry is a quasi-isometry with $\lambda :=\frac{1+m}{1-m}$ and $C=b$,
with $\lambda \rightarrow 1$ as $m\rightarrow 0$. However, it is simpler for
our purposes to use Definition \ref{scaled}.
\end{remark}

\begin{proposition}
\label{ghpoly}Suppose that $(X_{i},x_{i})$, $(X,x)$ are proper geodesic
spaces and $f_{i}:X_{i}\rightarrow X$ is a basepoint preserving $\sigma _{i}$%
-isometry for all $i$, where $\sigma _{i}$ is a first degree polynomial with 
$\sigma _{i}\rightarrow 0$ pointwise. Then $(X_{i},x_{i})$ is pointed
Gromov-Hausdorff convergent to $(X,x)$.
\end{proposition}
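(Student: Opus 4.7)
The plan is to verify the standard ball-based characterization of pointed Gromov-Hausdorff convergence for proper metric spaces: $(X_i,x_i)\to (X,x)$ provided that for every $R>0$ and every $\eta>0$ there exists $I$ such that for $i\geq I$ one can find a basepoint-preserving map $\phi_i:\bar{B}(x_i,R)\to X$ with distortion less than $\eta$ whose image is $\eta$-dense in $\bar{B}(x,R-\eta)$. The natural candidate is $\phi_i:=f_i\big|_{\bar{B}(x_i,R)}$, so the task reduces to extracting the right constants from the two inequalities defining a $\sigma_i$-isometry (Definition \ref{scaled}).

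Write $\sigma_i(t)=m_it+b_i$ with $m_i,b_i\geq 0$; since $\sigma_i\to 0$ pointwise we have both $m_i\to 0$ and $b_i\to 0$. Fix $R$ and $\eta>0$, and choose $I$ so large that for all $i\geq I$ one has $m_i<1/2$, $\sigma_i(2R)<\eta$, and $Rm_i+2b_i<\eta$. The distortion bound is immediate from condition (1): for $y,z\in \bar{B}(x_i,R)$, $d(y,z)\leq 2R$, and hence $|d(\phi_i(y),\phi_i(z))-d(y,z)|\leq \sigma_i(2R)<\eta$. Basepoint preservation gives $\phi_i(x_i)=x$.

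For the density claim, take $z\in \bar{B}(x,R-\eta)$. Condition (2) of Definition \ref{scaled} produces $w\in X_i$ with $d(z,f_i(w))\leq \sigma_i(0)=b_i$, so $d(f_i(w),x)\leq (R-\eta)+b_i$. Applying condition (1) to the pair $(w,x_i)$ and using $f_i(x_i)=x$ yields
\begin{equation*}
(1-m_i)\,d(w,x_i)\;\leq\; d(f_i(w),x)+b_i \;\leq\; R-\eta+2b_i.
\end{equation*}
The inequality $Rm_i+2b_i<\eta$ (rearranged as $R-\eta+2b_i<R(1-m_i)$) then gives $d(w,x_i)<R$, so $w\in \bar{B}(x_i,R)$ and $d(z,\phi_i(w))\leq b_i<\eta$, completing the verification.

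The argument is essentially a careful bookkeeping exercise; the only real subtlety is that the point $w$ produced by near-surjectivity of $f_i$ must lie in the \emph{same} ball $\bar{B}(x_i,R)$ on which $\phi_i$ was defined, which is exactly what the condition $Rm_i+2b_i<\eta$ buys us. Properness plays no role in the verification itself; it is invoked only through the equivalence of this ball-wise criterion with pointed Gromov-Hausdorff convergence.
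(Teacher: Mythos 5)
Your proof is correct, but it follows a somewhat different route than the paper's. The paper proves the stronger statement that for all large $i$ there is a genuine $\sigma$-isometry $g_i:B(x_i,R)\rightarrow B(x,R)$ \emph{between the balls themselves}: this forces two extra steps, namely (a) modifying $f_i$ by sliding any image point that escapes $B(x,R)$ back inside along a geodesic (this is where the geodesic hypothesis enters), and (b) a two-step bootstrap for near-surjectivity, first showing the preimage point $w$ lies in $B(x_i,2R)$ using the crude bound $\sigma_i(S)\leq \frac{S}{4}+\frac{\sigma}{4}$ and only then applying the sharper distortion estimate on that larger ball to conclude $w\in B(x_i,R)$. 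You instead invoke the standard Burago--Burago--Ivanov ball criterion (map into all of $X$, image $\eta$-dense only in the smaller ball $\bar{B}(x,R-\eta)$), which lets you use the unmodified restriction $f_i|_{\bar{B}(x_i,R)}$ and dispenses with both the geodesic retraction and the bootstrap: since condition (1) of Definition \ref{scaled} holds at every scale with the linear bound $\sigma_i(d(w,x_i))$, the single inequality $(1-m_i)d(w,x_i)\leq d(f_i(w),x)+b_i$ already pins $w$ inside $\bar{B}(x_i,R)$. Your version is more elementary and makes visible that the geodesic/proper hypotheses are needed only to make the ball-wise criterion equivalent to pointed Gromov-Hausdorff convergence, not for the estimates; the paper's version buys the slightly stronger and more self-contained conclusion that the restricted maps are themselves $\sigma$-isometries of balls in the sense of its own Definition \ref{scaled}, which is the form it reuses later. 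Both arguments are sound.
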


\begin{proof}
It suffices to show that for any constants $0<\sigma <1<R$, there is a $%
\sigma $-isometry $g_{i}:B(x_{i},R)\rightarrow B(x,R)$ for all large $i$.
For large $i$, $\sigma _{i}(4R)<\frac{\sigma }{4}$, and the restriction of $%
f_{i}$ to $B(x_{i},2R)$ has distortion less than $\frac{\sigma }{4}$. In
particular, if $y\in B(x_{i},R)$ then $f_{i}(y)\in B(x,R+\frac{\sigma }{4})$%
. If $f_{i}(y)\in B(x,R)$, let $g_{i}(y):=f_{i}(y)$. Otherwise, since $X$ is
geodesic, there is some $u\in B(x,R)$ such that $d(f_{i}(y),u)<\frac{\sigma}{%
4}$ (i.e. $u$ is on a geodesic from $f_{i}(y)$ to $x$); in this case define $%
g_{i}(y):=u$. By the triangle inequality, the distortion of $g_{i}$ on $%
B(x_{i},R)$ is at most $\frac{3\sigma }{4}$.

To finish, we need only check condition 2 of the definition of $\sigma $%
-isometry for the constant $\sigma $. If $\sigma_{i}(t)=m_{i}t+b_{i}$, then
by our choice of $i$ (and $R>1$), we have $m_i,b_i < \frac{\sigma }{4}<\frac{%
1}{4}$. Let $z\in B(x,R)$; since $X$ is geodesic we may find $z^{\prime }\in
B(x,R-\frac{\sigma }{2})$ such that $d(z,z^{\prime })<\frac{\sigma }{2}$.
Since $\sigma _{i}(0)=b_{i}<\frac{\sigma }{4}$, there is some $w\in X_{i}$
such that $d(f_{i}(w),z^{\prime })<\frac{\sigma }{4}$. By the triangle
inequality, $f_{i}(w)\in B(x,R-\frac{\sigma }{4})$, and hence by definition
of $g_{i} $, $g_{i}(w)=f_{i}(w)$. Again by the triangle inequality, $%
d(g_{i}(w),z)\leq \frac{3\sigma }{4}<\sigma $, and we are left only to show
that $w\in B(x_{i},R)$. Set $d(w,x_{i}):=S$. Then 
\begin{equation*}
S \leq d(f_{i}(w),x)+\sigma _{i}(S) < R-\frac{\sigma }{4}+\sigma _{i}(S) <R-%
\frac{\sigma }{4}+\frac{S}{4}+\frac{\sigma }{4},
\end{equation*}
which implies $S<\frac{4}{3}R$. Since now $w\in B(x_{i},2R)$, we can use the
distortion of $\sigma _{i}$ to on this ball to improve this estimate and
complete the proof: 
\begin{equation*}
S<d(f_{i}(w),f_{i}(x_{i}))+\frac{\sigma }{4}=d(f_{i}(w),x)+\frac{\sigma }{4}%
<R-\frac{\sigma }{4}+\frac{\sigma }{4}=R\text{.}
\end{equation*}
\end{proof}

\vspace{.1 in} Let $f:X\rightarrow Y$ be a function between metric spaces.
We will extend the notion of the induced function $f_{\#}$ from \cite{BPUU}
as follows. For any metric space $Z$ and $\mu >0$, let $\overline{Z_{\mu }}$
consist of all $[\alpha ]_{\mu }$, where $\alpha $ is a $\mu $-chain (not
necessarily starting at $\ast $). For any chain $\alpha
=\{x_{0},\dots,x_{n}\}$ in $X$, we let $f(\alpha
):=\{f(x_{0}),\dots,f(x_{n})\}$. Note that $f(\alpha \ast \beta )=f(\alpha
)\ast f(\beta )$ whenever the first concatenation is defined. Suppose now
that for some fixed $\varepsilon ,\delta >0$ and all $x,y\in X$, if $%
d(x,y)<\varepsilon $ then $d(f(x),f(y))<\delta $. If $\alpha $ is an $%
\varepsilon $-chain in $X$ then $f(\alpha )$ is a $\delta $-chain in $Y$.
Moreover, if $\eta =\{\eta _{1},\dots,\eta _{n}\}$ is an $\varepsilon $%
-homotopy in $X$ then $f(\eta ):=\{f(\eta _{1}),\dots,f(\eta _{n})\}$ is a $%
\delta $-homotopy in $Y$. It follows that the mapping $f_{\#}:\overline{%
X_{\varepsilon }}\rightarrow \overline{Y_{\delta }}$ defined by $f_{\#}\left(%
\left[ \alpha \right] _{\varepsilon }\right)=[f(\alpha )]_{\delta }$ is
well-defined and satisfies 
\begin{equation}
f_{\#}([\alpha \ast \beta ]_{\varepsilon })=f_{\#}([\alpha
]_{\varepsilon})\ast f_{\#}([\beta ]_{\varepsilon })  \label{concat}
\end{equation}
whenever the first concatenation is defined.

The next technical proposition sorts through the ways in which $f_{\#}$
inherits the properties of a $\sigma $-isometry $f$. The statement is not
the most general possible--for example it is possible to consider
non-constant $\sigma $ and the first part doesn't depend on the full
distortion assumption--but the statement is complicated enough as it is and
we will not need more general statements for this paper.

\begin{proposition}
\label{serious}Let $X,Y$ be metric spaces, $0<\frac{\varepsilon }{2}%
<\omega<\delta <\varepsilon $. Suppose that $f:X\rightarrow Y$ is basepoint
preserving $\sigma $-isometry for some constant $\sigma $ with $0\leq \sigma
<\min \{\varepsilon -\delta ,\frac{\delta -\omega }{4}\}$. Then the induced
map $f_{\#}:\overline{X_{\delta }}\rightarrow \overline{Y_{\varepsilon }}$
is defined, and the following hold.

\begin{enumerate}
\item[\emph{1.}] For any $\delta $-chain $\alpha $ in $X$, 
\begin{equation*}
\left\vert f_{\#}([\alpha ]_{\delta })\right\vert \leq \left\vert \lbrack
\alpha ]_{\delta }\right\vert +\sigma \left( \frac{4\left\vert
[\alpha]_{\delta }\right\vert }{\varepsilon }+1\right).
\end{equation*}

\item[\emph{2.}] For any $\omega $-chain $\alpha $ in $X$, 
\begin{equation*}
\left\vert \lbrack f(\alpha )]_{\omega +\sigma }\right\vert \geq \left\vert
\lbrack \alpha ]_{\delta }\right\vert -\sigma \left( \frac{4}{\varepsilon }+ 
\frac{16\sigma }{\varepsilon ^{2}}\right) \left( \left\vert [\alpha]_{\omega
}\right\vert \right) -\sigma \left( \frac{4\sigma }{\varepsilon }+1\right).
\end{equation*}

\item[\emph{3.}] If $\beta $ is an $\left( \omega -3\sigma \right) $-chain
in $Y$ starting at $\ast $ then there exists some $\omega $-chain $\alpha $
in $X$ starting at $\ast $ such $d(f_{\#}([\alpha ]_{\delta }),[\beta
]_{\varepsilon })<\sigma $ in $Y_{\varepsilon }$.
\end{enumerate}
\end{proposition}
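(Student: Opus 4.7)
I treat the three parts in order, first noting that the hypothesis $\sigma<\varepsilon-\delta$ makes $f_{\#}:\overline{X_{\delta}}\rightarrow\overline{Y_{\varepsilon}}$ well-defined, since any $\delta$-chain in $X$ maps to an $\varepsilon$-chain in $Y$. The common tool across all three parts is Lemma \ref{lestl}, which controls the number of edges of near-minimal chain representatives, together with the distortion bound. For Part 1, I fix $\eta>0$ and choose $\alpha^{\ast}\in[\alpha]_{\delta}$ with $L(\alpha^{\ast})<|[\alpha]_{\delta}|+\eta$; Lemma \ref{lestl} then produces $\alpha'\in[\alpha]_{\delta}$ with $L(\alpha')\leq L(\alpha^{\ast})$ and $\nu(\alpha')\leq 2L(\alpha^{\ast})/\delta+1\leq 4L(\alpha^{\ast})/\varepsilon+1$ (using $\delta>\omega>\varepsilon/2$). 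Applying the distortion bound edge-by-edge gives $|f_{\#}([\alpha]_{\delta})|\leq L(f(\alpha'))\leq L(\alpha')+\sigma\nu(\alpha')$, and letting $\eta\to 0$ yields the claim.

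Part 2 is the main step. I begin by using Lemma \ref{lestl} to replace $\alpha$ by a representative $\alpha^{\ast}\in[\alpha]_{\omega}$ with $L(\alpha^{\ast})\approx|[\alpha]_{\omega}|$ and $\nu(\alpha^{\ast})\leq 4|[\alpha]_{\omega}|/\varepsilon+1$; since $\omega<\delta$ and $f$ carries $\omega$-homotopies to $(\omega+\sigma)$-homotopies, both $[\alpha]_{\delta}$ and $[f(\alpha)]_{\omega+\sigma}$ are unchanged by this replacement. I then select a near-minimizer $\beta\in[f(\alpha^{\ast})]_{\omega+\sigma}$ and further apply Lemma \ref{lestl} to ensure $\nu(\beta)\leq 4L(\beta)/\varepsilon+1$. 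The crucial construction inductively reverse-lifts the $(\omega+\sigma)$-homotopy from $f(\alpha^{\ast})$ to $\beta$ into a $\delta$-homotopy from $\alpha^{\ast}$ to a chain $\alpha'$ in $X$: each point $y$ inserted in a basic move of the $Y$-homotopy is matched by inserting a preimage $\tilde{y}$ (with $d(f(\tilde{y}),y)\leq\sigma$, obtained from condition 2) in the corresponding position in $X$; removals are handled analogously. Here the hypothesis $\sigma<(\delta-\omega)/4$ is used precisely to verify $d(\tilde{y}_{j},\tilde{y})\leq\omega+4\sigma<\delta$, so every lifted step is a valid $\delta$-basic-move. The resulting $\alpha'$ is $\delta$-homotopic to $\alpha$ with $\nu(\alpha')=\nu(\beta)$, and satisfies $d(f(x_{i}'),y_{i})\leq\sigma$ at every position. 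Summing the edge-by-edge estimate $d(x_{i}',x_{i+1}')\leq d(y_{i},y_{i+1})+O(\sigma)$ yields $L(\alpha')\leq L(\beta)+O(\sigma\nu(\beta))$, and combining with $|[\alpha]_{\delta}|\leq L(\alpha')$, the bound on $\nu(\beta)$, and the upper bound $L(\beta)\leq L(f(\alpha^{\ast}))\leq|[\alpha]_{\omega}|(1+4\sigma/\varepsilon)+\sigma$ produces the stated inequality after careful constant-tracking.

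For Part 3, given $\beta=\{\ast=y_{0},y_{1},\ldots,y_{n}\}$, I construct $\alpha=\{\ast=x_{0},x_{1},\ldots,x_{n}\}$ by choosing $x_{i}$ with $d(f(x_{i}),y_{i})\leq\sigma$ for each $i\geq 1$; the triangle inequality combined with $d(y_{i},y_{i+1})<\omega-3\sigma$ gives $d(f(x_{i}),f(x_{i+1}))<\omega-\sigma$, and then the distortion bound gives $d(x_{i},x_{i+1})<\omega$, so $\alpha$ is an $\omega$-chain. I take $\kappa:=\{f(x_{n}),y_{n}\}$ of length at most $\sigma$ and show that $\mu:=f(\alpha)\ast\kappa\ast\overline{\beta}$ is $\varepsilon$-null in $Y$ by iterated cell contraction from right to left: each quadrilateral cell $\{f(x_{i}),f(x_{i+1}),y_{i+1},y_{i}\}$ has all pairwise distances strictly less than $\varepsilon$ (the diagonal estimate $d(f(x_{i+1}),y_{i})\leq(\omega-3\sigma)+\sigma=\omega-2\sigma<\varepsilon$ is the binding one), so a short sequence of $\varepsilon$-basic moves contracts each cell and reduces $\mu$ to the trivial loop. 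The main obstacle throughout is the Part 2 reverse-lifting construction, both in verifying at each step that the lift stays within the $\delta$-chain regime and in tracking the resulting constants through the chain of inequalities to match the claimed form.
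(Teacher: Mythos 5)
Your proposal is correct and follows essentially the same route as the paper's proof: Lemma \ref{lestl} to control edge counts, the consistent point-by-point reverse-lifting of the $(\omega+\sigma)$-homotopy into a $\delta$-homotopy (with $\omega+4\sigma<\delta$ doing exactly the work you describe), and the preimage construction for Part 3. The only cosmetic differences are that in Part 3 you contract quadrilateral cells directly where the paper appends a duplicate endpoint and invokes Proposition \ref{close}, and that you defer the Part 2 constant bookkeeping, which is routine given your setup.
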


\begin{proof}
That the induced map is defined follows from $\delta +\sigma <\varepsilon $.
According to Lemma \ref{lestl}, up to $\delta $-homotopy and without
increasing the length of $\alpha $, we may assume that $\alpha
=\{x_{0},\dots,x_{n}\}$ where $n=\left\lfloor \frac{2L(\alpha )}{\delta }%
+1\right\rfloor $. We have 
\begin{equation*}
L(f(\alpha ))\leq L(\alpha )+n\sigma =L(\alpha )+\left( \frac{2L(\alpha )}{%
\delta }+1\right) \sigma.
\end{equation*}
By definition, $f(\alpha )\in f_{\#}(\left[ \alpha \right] _{\delta })$,
therefore $\left\vert f_{\#}([\alpha ]_{\delta })\right\vert \leq
L(f(\alpha))$. The proof of part 1 follows by taking the infimum of the
right side and using $\delta >\frac{\varepsilon }{2}$.

For the second part, fix $\tau >0$ and let $\alpha
^{\prime}:=\{x_{0},\dots,x_{m}\}$ be an $\omega $-chain such that $[\alpha
^{\prime}]_{\omega }=[\alpha ]_{\omega }$ and $L(\alpha ^{\prime })\leq
\left\vert\lbrack \alpha ]_{\omega }\right\vert +\tau $. By Lemma \ref{lestl}
we may suppose that $m=\left\lfloor \frac{2L(\alpha ^{\prime })}{\omega }%
+1\right\rfloor $. Then $\beta := f(\alpha
^{\prime})=\{f(x_{0}),\dots,f(x_{m})\}$ is an $\left( \omega +\sigma \right) 
$-chain of length at most $K=L(\alpha ^{\prime })+m\sigma $. Let $%
\eta=\left\langle \beta =\eta _{0},\dots,\eta _{n}=\beta ^{\prime
}\right\rangle $ be an $\left( \omega +\sigma \right) $-homotopy, where $%
L(\beta ^{\prime})\leq L(\beta )$; again by Lemma \ref{lestl} we may assume
that $\nu (\beta^{\prime })=m^{\prime }:=\left\lfloor \frac{2K}{\omega
+\sigma }+1\right\rfloor $. Denote by $y_{ij}$ the $i^{th}$ point of $\eta
_{j}$; note that the points $y_{ij}$ are not necessarily distinct!
Nonetheless, we may iteratively choose for each $y_{ij}$ a point $x_{ij}\in X
$ such that $d(f(x_{ij}),y_{ij})<\sigma $ and the following are true: $%
x_{i0}=x_{i}$ (which is possible since $\eta _{0}=\beta =f(\alpha )$), and
if $y_{ij}=y_{ab}$ then $x_{ij}=x_{ab}$. For any $x_{ij},x_{ab}$, 
\begin{equation*}
d(x_{ij},x_{ab})\leq d(f(x_{ij}),f(x_{ab}))+\sigma \leq
d(y_{ij},y_{ab})+3\sigma
\end{equation*}
and therefore if we let $\eta _{j}^{\prime }$ denote the chain in $X$ having 
$x_{ij}$ as its $i^{th}$ point, $\eta ^{\prime }:=\left\langle
\eta_{0}^{\prime },\dots,\eta _{n}^{\prime }\right\rangle $ is an $\left(
\omega +4\sigma \right) $-homotopy between $\alpha ^{\prime }$ and a chain $%
\alpha^{\prime \prime }$ of length at most $L(\beta ^{\prime })+m^{\prime
}\sigma $. Since $\omega +4\sigma <\delta $, $\eta ^{\prime }$ is in fact a $%
\delta $-homotopy. That is, 
\begin{equation*}
\left\vert \lbrack \alpha ]_{\delta }\right\vert \leq L(\beta
^{\prime})+\sigma \left( \frac{2K}{\omega +\sigma }+1\right) <L(\beta
^{\prime})+\sigma \left( \frac{4K}{\varepsilon }+1\right)
\end{equation*}
(in the second inequality we used $\omega +\sigma >\omega >\frac{\varepsilon 
}{2}$). Next, 
\begin{equation*}
K\leq L(\alpha ^{\prime })+\sigma \left( \frac{4L(\alpha ^{\prime })}{%
\varepsilon }+1\right) =L(\alpha ^{\prime })\left( \frac{4\sigma }{%
\varepsilon }+1\right) +\sigma \text{.}
\end{equation*}
Putting these two together yields: 
\begin{align*}
\left\vert \lbrack \alpha ]_{\delta }\right\vert &\leq L(\beta
^{\prime})+\sigma \left( \frac{4}{\varepsilon }+\frac{16\sigma }{\varepsilon
^{2}}\right) L(\alpha ^{\prime })+\sigma \left( \frac{4\sigma }{\varepsilon }%
+1)\right) \\
&\leq L(\beta ^{\prime })+\sigma \left( \frac{4}{\varepsilon }+\frac{%
16\sigma }{\varepsilon ^{2}}\right) \left( \left\vert [\alpha
]_{\omega}\right\vert +\tau \right) +\sigma \left( \frac{4\sigma }{%
\varepsilon }+1)\right)
\end{align*}
Letting $\tau \rightarrow 0$ gives us 
\begin{equation*}
L(\beta ^{\prime })\geq \left\vert \lbrack \alpha ]_{\delta }\right\vert
-\sigma \left( \frac{4}{\varepsilon }+\frac{16\sigma }{\varepsilon ^{2}}%
\right) \left( \left\vert [\alpha ]_{\omega }\right\vert \right) -\sigma
\left( \frac{4\sigma }{\varepsilon }+1)\right).
\end{equation*}
Taking the infimum over all $\beta ^{\prime }$ in $[f(\alpha
)]_{\omega+\sigma }$ yields the second inequality.

For the third part, let $\beta =\{\ast =y_{0},\dots,y_{n}\}$. As usual we
take $n:=\left\lfloor \frac{2L(\beta )}{\omega }+1\right\rfloor $. Since $f$
is a $\sigma $-isometry, we may find points $z_{i}=f(x_{i})$ such that $%
d(z_{i},y_{i})\leq \sigma $ for $1\leq i\leq n$. We have 
\begin{equation*}
d(x_{i},x_{i+1})\leq d(z_{i},z_{i+1})+\sigma \leq d(y_{i},y_{i+1})+\sigma
+2\sigma <\omega
\end{equation*}
and therefore the chain $\alpha :=\{\ast =x_{0},\dots,x_{n}\}$ is an $\omega 
$-chain such that $f(\alpha )=\beta ^{\prime }:=\{\ast =z_{0},\dots,z_{n}\}$%
. Moreover,

\begin{equation*}
L(\alpha )\leq L(\beta )+3\sigma \left( \frac{2L(\beta )}{\omega }+1\right)
\leq L(\beta )+3\sigma \left( \frac{4L(\beta )}{\varepsilon }+1\right).
\end{equation*}
Next, let $\beta ^{\prime \prime }:=\{\ast =z_{0},....,z_{n-1},z_{n},y_{n}\}$
and $\beta ^{\prime \prime \prime }:=\{\ast =y_{0},\dots ,y_{n},y_{n}\}$;
note that $[\beta ^{\prime \prime \prime }]_{\varepsilon
}=[\beta]_{\varepsilon }$. Since $\beta ^{\prime \prime \prime }$ is an $%
\left(\omega -3\sigma \right) $-chain, $E(\beta ^{\prime \prime
\prime})>\varepsilon -(\omega -3\sigma )>2\sigma $, and since $\Delta
(\beta^{\prime \prime \prime },\beta ^{\prime \prime })<\sigma $, we may
apply Proposition \ref{close} to conclude that $[\beta ^{\prime \prime
}]_{\varepsilon }=[\beta ^{\prime \prime \prime }]_{\varepsilon
}=[\beta]_{\varepsilon }$. Finally:

\begin{equation*}
d([\beta ^{\prime }]_{\varepsilon },[\beta ]_{\varepsilon
})=d([\beta^{\prime }]_{\varepsilon },[\beta ^{\prime \prime
}]_{\varepsilon})=\left\vert [\overline{\beta ^{\prime }}\ast \beta ^{\prime
\prime}]_{\varepsilon }\right\vert =\left\vert
[z_{n},y_{n}]_{\varepsilon}\right\vert =d(z_{n},y_{n})<\sigma
\end{equation*}
\end{proof}

\begin{theorem}
\label{firstdeg}For every $\varepsilon ,\sigma >0$ there is a first degree
polynomial $p(\sigma ,\varepsilon )$ with non-negative coefficients such
that $p \rightarrow 0$ as $\sigma \rightarrow 0$ (with $\varepsilon $ fixed)
and the following property holds. Let $X,Y$ be geodesic spaces such that $%
\phi _{\varepsilon \omega _{0}}:Y_{\omega _{0}}\rightarrow Y_{\varepsilon }$
is an injection and $\omega _{0}<\delta <\varepsilon $. If $f:X\rightarrow Y$
is a basepoint preserving $\sigma $-isometry for some positive constant $%
\sigma <\min \left\{ \varepsilon -\delta ,\frac{\delta -\omega _{0}}{4}%
\right\} $ then $f_{\#}:X_{\delta }\rightarrow Y_{\varepsilon }$ is a $%
p(\sigma ,\varepsilon )$-isometry.
\end{theorem}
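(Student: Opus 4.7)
The plan is to combine the three estimates in Proposition \ref{serious} with a careful choice of auxiliary scale $\omega$ to verify the two requirements for $f_\#$ to be a $p$-isometry: the distortion bound and the net condition. First I would fix $\omega$ in an interval of the form $(\max\{\varepsilon/2,\omega_0\},\,\delta-4\sigma)$; the hypotheses on $\sigma$ keep this interval nonempty (with the implicit lower bound on $\delta$ needed to apply Proposition \ref{serious} in the first place). This choice makes Proposition \ref{serious} applicable to $f\colon X\to Y$ and also places $\omega+\sigma\in[\omega_0,\varepsilon)$, so by Lemma \ref{nocrit} the map $\phi_{\varepsilon,\omega+\sigma}\colon Y_{\omega+\sigma}\to Y_\varepsilon$ is a bijective local isometry; since both sides carry the (uniquely determined) lifted geodesic metric from $Y$, it is in fact a basepoint-preserving isometry.

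For the distortion estimate, given two classes $[\alpha]_\delta,[\beta]_\delta\in X_\delta$, set $\gamma:=\overline{\alpha}\ast\beta$, so $d([\alpha]_\delta,[\beta]_\delta)=|[\gamma]_\delta|$ and, by the concatenation identity (\ref{concat}), $d(f_\#([\alpha]_\delta),f_\#([\beta]_\delta))=|f_\#([\gamma]_\delta)|$. Proposition \ref{serious}(1) applied to $\gamma$ yields the upper direction
\[|f_\#([\gamma]_\delta)|-|[\gamma]_\delta|\le(4\sigma/\varepsilon)|[\gamma]_\delta|+\sigma.\]
For the matching lower bound, fix $\tau>0$, pick $\gamma''\in[\gamma]_\delta$ with $L(\gamma'')<|[\gamma]_\delta|+\tau$, and (using that $X$ is geodesic) replace it by an $\omega$-refinement $\gamma'$; then $L(\gamma')=L(\gamma'')$, $[\gamma']_\delta=[\gamma]_\delta$, and $|[\gamma']_\omega|\le L(\gamma')<|[\gamma]_\delta|+\tau$. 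Proposition \ref{serious}(2) applied to $\gamma'$ then bounds $|[f(\gamma')]_{\omega+\sigma}|$ below by $|[\gamma]_\delta|-\sigma(4/\varepsilon+16\sigma/\varepsilon^2)(|[\gamma]_\delta|+\tau)-\sigma(4\sigma/\varepsilon+1)$.

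The next step is to identify $|[f(\gamma')]_{\omega+\sigma}|$ with $|f_\#([\gamma]_\delta)|$. Because each additional vertex inserted in passing from $f(\gamma'')$ to $f(\gamma')$ sits in an $\varepsilon$-triangle (the key bound being $\delta+\sigma<\varepsilon$), the chains $f(\gamma')$ and $f(\gamma)$ are $\varepsilon$-homotopic, so $[f(\gamma')]_\varepsilon=f_\#([\gamma]_\delta)$; combining this with the isometry $\phi_{\varepsilon,\omega+\sigma}$ gives $|[f(\gamma')]_{\omega+\sigma}|=|f_\#([\gamma]_\delta)|$. Letting $\tau\to 0$ then produces the lower distortion estimate
\[|[\gamma]_\delta|-|f_\#([\gamma]_\delta)|\le\sigma(4/\varepsilon+16\sigma/\varepsilon^2)|[\gamma]_\delta|+\sigma(4\sigma/\varepsilon+1).\]

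For the net condition, given $[\beta]_\varepsilon\in Y_\varepsilon$, refine $\beta$ in the geodesic space $Y$ to an $(\omega-3\sigma)$-chain $\beta'$ with $[\beta']_\varepsilon=[\beta]_\varepsilon$ (noting $\omega>\varepsilon/2$ makes $\omega-3\sigma>0$ for the relevant $\sigma$); Proposition \ref{serious}(3) then supplies an $\omega$-chain $\alpha$ in $X$, which determines $[\alpha]_\delta\in X_\delta$, with $d(f_\#([\alpha]_\delta),[\beta]_\varepsilon)<\sigma$. Taking the maximum of the linear coefficient and the constant from the two distortion bounds yields the explicit polynomial $p(\sigma,\varepsilon)(t)=\sigma(4/\varepsilon+16\sigma/\varepsilon^2)\,t+\sigma(4\sigma/\varepsilon+1)$, whose coefficients vanish as $\sigma\to 0$ with $\varepsilon$ fixed. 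I expect the main obstacle to be the identification $|[f(\gamma')]_{\omega+\sigma}|=|f_\#([\gamma]_\delta)|$, which relies on both the metric promotion of $\phi_{\varepsilon,\omega+\sigma}$ from injective covering map to genuine isometry and the refinement/homotopy bookkeeping comparing $f(\gamma')$ with $f(\gamma)$ in $Y_\varepsilon$.
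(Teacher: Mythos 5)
Your proposal is correct and follows essentially the same route as the paper's proof: choose $\omega$ in $(\omega_0,\delta-4\sigma)$ (above $\varepsilon/2$) so that Proposition \ref{serious} applies, promote $\phi_{\varepsilon,\omega+\sigma}$ to an isometry via injectivity of $\phi_{\varepsilon\omega_0}$, use geodesic refinement to replace $|[\alpha]_\omega|$ by $|[\alpha]_\delta|$ in part 2, convert the norm estimates to distance estimates via the concatenation identity, and handle the net condition with part 3; the resulting polynomial $p(\sigma,\varepsilon)(t)=\sigma(4/\varepsilon+16\sigma/\varepsilon^2)t+\sigma(4\sigma/\varepsilon+1)$ is exactly the paper's. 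If anything, you are slightly more explicit than the paper about the need for $\omega>\varepsilon/2$ and about the $\tau\to 0$ refinement bookkeeping.
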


\begin{proof}
Since $4\sigma <\delta -\omega _{0}$, we can choose $\omega $ such that $%
\omega _{0}<\omega <\delta -4\sigma $. Now we have the remaining two
conditions, $\omega <\delta $ and $\sigma <\frac{\delta -\omega }{4}$, that
are needed to apply Proposition \ref{serious}. Note that since $Y$ is
geodesic and hence all maps $\phi _{ab}$ are surjective, $\phi
_{\varepsilon\omega _{0}}$ is an isometry. But then $\phi _{\varepsilon
\omega }$ is also an isometry. Since $X$ is geodesic, we may always refine a 
$\delta $-chain to an $\omega $-chain of the same length, which means that
on the right side of the inequality in the second part of Proposition \ref%
{serious}, we may replace $\left\vert [\alpha ]_{\omega }\right\vert $ by $%
\left\vert [\alpha]_{\delta }\right\vert $. Also since $\phi _{\varepsilon
\omega }$ is an isometry, $\left\vert [f(\alpha )]_{\omega +\sigma
}\right\vert =\left\vert [f(\alpha )]_{\varepsilon }\right\vert =\left\vert
f_{\#}([\alpha ]_{\delta})\right\vert $. That is, we have 
\begin{equation*}
\left\vert \lbrack \alpha ]_{\delta }\right\vert -\left\vert
f_{\#}([\alpha]_{\delta })\right\vert \leq \sigma \left( \frac{4}{%
\varepsilon }+\frac{16\sigma }{\varepsilon ^{2}}\right) \left( \left\vert
[\alpha ]_{\delta}\right\vert \right) +\sigma \left( \frac{4\sigma }{%
\varepsilon }+1\right).
\end{equation*}
The first part of Proposition \ref{serious} gives us: 
\begin{equation*}
\left\vert f_{\#}([\alpha ]_{\delta })\right\vert -\left\vert
[\alpha]_{\delta }\right\vert \leq \frac{4\sigma }{\varepsilon }\left\vert
[\alpha]_{\delta }\right\vert +\sigma.
\end{equation*}
Let $m(\sigma ,\varepsilon ):=\sigma \left( \frac{4}{\varepsilon }+\frac{%
16\sigma }{\varepsilon ^{2}}\right) >\frac{4\sigma }{\varepsilon }$, $%
b(\sigma ,\varepsilon ):=\sigma \left( \frac{4\sigma }{\varepsilon }%
+1\right) >\sigma $ and $p(\sigma ,\varepsilon )(t)=m(\sigma
,\varepsilon)t+b(\sigma ,\varepsilon )$. The coefficients of $p$ then have
the desired property. Since 
\begin{equation*}
d(f_{\#}([\alpha ]_{\delta }),f_{\#}([\beta ]_{\delta
}))=d([f(\alpha)]_{\varepsilon },[f(\beta )]_{\varepsilon })=\left\vert 
\left[ \overline{f(\alpha )}\ast f(\beta )\right] _{\varepsilon }\right\vert
=\left\vert \left[ f(\overline{\alpha }\ast \beta )\right] _{\varepsilon
}\right\vert
\end{equation*}
and $d([\alpha ]_{\delta },[\beta ]_{\delta })=\left\vert [\overline{\alpha }%
\ast \beta ]_{\delta }\right\vert $ we see that $f_{\#}$ has distortion at
most $p(\sigma ,\varepsilon )$.

Finally, let $[\beta ]_{\varepsilon }\in Y_{\varepsilon }$; since $Y$ is
geodesic (and all maps $\phi _{ab}$ are surjective) there is some $%
\left(\omega -3\sigma \right) $-chain $\beta ^{\prime }$ such that $%
[\beta]_{\varepsilon }=[\beta ^{\prime }]_{\varepsilon }$. The proof is now
finished by the third part of Proposition \ref{serious}.
\end{proof}

\vspace{0.1in} We need an equivariant version of pointed Gromov-Hausdorff
convergence. Origins of such an idea may be found in \cite{F}, \cite{FY},
and something like this was used, for example, in \cite{EW}. Let $%
f:X\rightarrow Y$ be a function between metric spaces and suppose there are
groups $H$ and $K$ of isometries on $X$ and $Y$, respectively, with a
homomorphism $\psi :H\rightarrow K$. As usual, we say $f$ is equivariant
(with respect to $\psi $) if for all $h\in H$ and $x\in X$, $%
f(h(x))=\psi(h)(f(x))$. Then there is a well-defined induced mapping $%
f_{\pi}:X/H\rightarrow Y/K$ defined by $f_{\pi }(Hx)=Kf(x)$, where $%
Hx:=\{h(x):h\in H\}$ is the orbit of $x$. We will take the quotient
pseudo-metric on $X/H$ and $Y/K$: 
\begin{equation*}
d(Hx,Hy)=\inf \{d(k(x),h(y)):h,k\in H\}=\inf \{d(x,h(y)):h\in H\}\text{.}
\end{equation*}
When the orbits of the action are closed sets, $d$ is a \textit{bona fide}
metric.

In the next theorem note that some $\omega _{0}>\varepsilon $ to satisfy the
hypothesis always exists since the homotopy critical values are discrete.
The requirement that $\frac{\varepsilon }{2}<\omega _{0}$ isn't firm, but it
simplifies the calculation. All that really matters is that the statement is
true for every $\delta <\varepsilon $ that is sufficiently close to $%
\varepsilon $.

\begin{theorem}
\label{almstdone}Let $\{X_{i}\}$ be a collection of compact geodesic spaces
such that for all $i$ there is a basepoint preserving $\sigma _{i}$-isometry 
$f_{i}:X_{i}\rightarrow X$ for some sequence of constants $%
\sigma_{i}\rightarrow 0$. Let $\varepsilon >0$, suppose that $\phi
_{\varepsilon\omega _{0}}:X_{\omega _{0}}\rightarrow X_{\varepsilon }$ is
injective and let $\frac{\varepsilon }{2}<\omega _{0}<\delta <\varepsilon $.
Then for all large $i$, the following hold.

\begin{enumerate}
\item[\emph{1.}] $\left( f_{i}\right) _{\#}:\left( X_{i}\right)
_{\delta}\rightarrow X_{\varepsilon }$ is a $p(\sigma _{i},\varepsilon )$%
-isometry, and in particular, $(X_{i})_{\delta }$ is Gromov-Hausdorff
pointed convergent to $X_{\varepsilon }$.

\item[\emph{2.}] The restriction of $\left( f_{i}\right) _{\#}$ to $%
\pi_{\delta }(X_{i})$ - denoted hereafter by $(f_{i})_{\theta}$ - is an
isomorphism onto $\pi _{\varepsilon }(X)$.

\item[\emph{3.}] $\left( f_{i}\right) _{\#}$ is equivariant with respect to $%
(f_{i})_{\theta }$.
\end{enumerate}
\end{theorem}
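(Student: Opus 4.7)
The plan is to obtain Part~1 directly from Theorem~\ref{firstdeg} and Proposition~\ref{ghpoly}, and to handle Parts~2 and~3 by combining the concatenation identity~(\ref{concat}) with the distortion bounds of the $p(\sigma_i,\varepsilon)$-isometry. For Part~1, since $\sigma_i\to 0$ we have $\sigma_i<\min\{\varepsilon-\delta,(\delta-\omega_0)/4\}$ for all large $i$, so Theorem~\ref{firstdeg} applied with $Y=X$ shows that $(f_i)_\#\colon(X_i)_\delta\to X_\varepsilon$ is a $p(\sigma_i,\varepsilon)$-isometry. Each cover is proper geodesic (it covers a compact geodesic space by a local isometry with discrete deck group), and since $p(\sigma_i,\varepsilon)\to 0$ pointwise, Proposition~\ref{ghpoly} yields the pointed Gromov-Hausdorff convergence.

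For Part~2, the identity~(\ref{concat}) together with $f_i(\ast)=\ast$ shows that the restriction $(f_i)_\theta\colon\pi_\delta(X_i)\to\pi_\varepsilon(X)$ is a well-defined homomorphism. For injectivity, recall that distinct points of the fiber $\phi_\delta^{-1}(\ast)\subset(X_i)_\delta$ are at distance at least $\delta/2$, since $\phi_\delta$ restricts to an isometry on every ball of radius $\delta/2$. Writing $p(\sigma_i,\varepsilon)(t)=m(\sigma_i,\varepsilon)\,t+b(\sigma_i,\varepsilon)$, a nontrivial $[\lambda]_\delta$ in the kernel would force
\[
\delta/2\le d([\lambda]_\delta,[\ast]_\delta)\le m(\sigma_i,\varepsilon)\,d([\lambda]_\delta,[\ast]_\delta)+b(\sigma_i,\varepsilon),
\]
which fails once $\sigma_i$ is small enough that $b(\sigma_i,\varepsilon)/(1-m(\sigma_i,\varepsilon))<\delta/2$.

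For surjectivity, given $[\beta]_\varepsilon\in\pi_\varepsilon(X)$, refine $\beta$ (using that $X$ is geodesic) to an $(\omega-3\sigma_i)$-chain representative, where $\omega_0<\omega<\delta-4\sigma_i$ is the auxiliary value from the proof of Theorem~\ref{firstdeg}. Part~3 of Proposition~\ref{serious} then produces an $\omega$-chain $\alpha=\{\ast=x_0,\dots,x_n\}$ in $X_i$ with $d((f_i)_\#([\alpha]_\delta),[\beta]_\varepsilon)<\sigma_i$ in $X_\varepsilon$; inspecting that proof one sees that in fact $[f_i(\alpha)\ast\{f_i(x_n),\ast\}]_\varepsilon=[\beta]_\varepsilon$ in $\pi_\varepsilon(X)$. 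The $\sigma_i$-isometry condition gives $d(x_n,\ast)<2\sigma_i<\delta$, so $\alpha':=\alpha\ast\{x_n,\ast\}$ is a $\delta$-loop in $X_i$ with $(f_i)_\theta([\alpha']_\delta)=[\beta]_\varepsilon$. Part~3 of the theorem is then immediate from~(\ref{concat}): for any $[\lambda]_\delta\in\pi_\delta(X_i)$ and $[\alpha]_\delta\in(X_i)_\delta$,
\[
(f_i)_\#\bigl([\lambda]_\delta\cdot[\alpha]_\delta\bigr)=(f_i)_\#([\lambda\ast\alpha]_\delta)=(f_i)_\theta([\lambda]_\delta)\cdot(f_i)_\#([\alpha]_\delta).
\]
I expect the main obstacle to be the surjectivity step: one must reopen the construction in Proposition~\ref{serious}(3) to confirm both that the closing segment $\{x_n,\ast\}$ preserves the $\delta$-chain property and that $(f_i)_\theta([\alpha']_\delta)$ lands on $[\beta]_\varepsilon$ \emph{exactly} in $\pi_\varepsilon(X)$, not merely within distance $\sigma_i$ in $X_\varepsilon$.
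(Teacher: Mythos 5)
Your proof is correct and follows the paper's own argument almost step for step: Part~1 via Theorem~\ref{firstdeg}, injectivity via the distortion bound combined with the discreteness of the fiber $\phi_\delta^{-1}(\ast)$, and Part~3 directly from~(\ref{concat}). The only genuine divergence is in the surjectivity step, and the concern you flag there resolves affirmatively: in the notation of the proof of Proposition~\ref{serious}(3), when $\beta$ is a loop one has $y_n=\ast$, the chain $\beta''=f_i(\alpha)\ast\{f_i(x_n),\ast\}$ equals $f_i(\alpha')$, and Proposition~\ref{close} gives $[\beta'']_\varepsilon=[\beta''']_\varepsilon=[\beta]_\varepsilon$ \emph{exactly}, while $d(x_n,\ast)\le d(f_i(x_n),\ast)+\sigma_i<2\sigma_i<\delta$ ensures $\alpha'$ is a $\delta$-loop; so $(f_i)_\theta([\alpha']_\delta)=[\beta]_\varepsilon$ on the nose. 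The paper avoids reopening Proposition~\ref{serious} by a slightly different route: it uses only the stated approximate conclusion (an $[\alpha]_\delta$ with $(f_i)_\#([\alpha]_\delta)$ within $\varepsilon/6$ of $[\lambda]_\varepsilon$), closes up $\alpha$ to a $\delta$-loop $\lambda'$, and then concludes $(f_i)_\#([\lambda']_\delta)=[\lambda]_\varepsilon$ because both lie in the fiber $\phi_\varepsilon^{-1}(\ast)$ and are within $\varepsilon$ of each other, where $\phi_\varepsilon$ is injective on open $\varepsilon$-balls. Your version buys exactness directly from the construction at the cost of depending on the internals of Proposition~\ref{serious}; the paper's version is black-box with respect to that proposition but needs the extra separation argument in the fiber. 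Both are sound.
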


\begin{proof}
The first part is an immediate consequence of Theorem \ref{firstdeg}. One
need only observe that $\sigma _{i}<\min \left\{ \varepsilon -\delta ,\frac{%
\delta -\omega _{0}}{4}\right\} $ for all large $i$. For the next two parts,
note that since $f_{i}$ is basepoint preserving, $(f_{i})_{\theta }$ does
map into $\pi _{\varepsilon }(X)$. That $(f_{i})_{\theta }$ is a
homomorphism, and that $\left( f_{i}\right) _{\#}$ is equivariant, both
follow from Equation (\ref{concat}), and it remains to be shown that $%
(f_{i})_{\theta }$ is an isomorphism for large $i$ . To do this we will add
a few more conditions that are satisfied for all large $i$. We first require 
$\sigma _{i}<\frac{\varepsilon }{6}$. Next note that if we let $%
p(\sigma_{i},\varepsilon )(t):=m_{i}t+b_{i}$, then $m_{i},b_{i}\rightarrow 0$%
. If $d\bigl(\left( f_{i}\right) _{\#}([\alpha ]_{\delta}),\left(
f_{i}\right)_{\#}([\beta ]_{\delta })\bigr)=D$ then $d([\alpha ]_{\delta
},[\beta]_{\delta })\leq \frac{D+b_{i}}{1-m_{i}}$. In particular, we may
conclude the following for all large $i$: 
\begin{equation}  \label{ifthen}
\text{If }d(\left( f_{i}\right) _{\#}([\alpha ]_{\delta
}),\left(f_{i}\right) _{\#}([\beta ]_{\delta }))<\frac{\varepsilon }{3}, 
\text{ then } d([\alpha ]_{\delta },[\beta ]_{\delta })<\frac{\varepsilon }{2%
}\text{.}
\end{equation}
For large enough $i$ the following also hold: 
\begin{equation}  \label{ifthen3}
\text{If }[\beta ]_{\varepsilon }\in X_{\varepsilon},\text{ then there is
some }[\alpha ]_{\delta }\text{ such that }d(\left(
f_{i}\right)_{\#}([\alpha ]_{\delta }),[\beta ]_{\varepsilon })<\frac{%
\varepsilon }{6}.
\end{equation}
\begin{equation}  \label{ifthen2}
\text{If }d([\alpha ]_{\delta },[\beta ]_{\delta })<\frac{\varepsilon }{3}, 
\text{ then }d(\left( f_{i}\right) _{\#}([\alpha ]_{\delta
}),\left(f_{i}\right) _{\#}([\beta ]_{\delta }))<\frac{\varepsilon }{2}.
\end{equation}

Suppose that $[\lambda ]_{\delta }\in \ker (f_{i})_{\theta }$. Then $%
d((f_{i})_{\theta }\left( [\lambda ]_{\delta }\right) ,[\ast
]_{\varepsilon})=0<\frac{\varepsilon }{3}$ and by (\ref{ifthen}), $%
d([\lambda ]_{\delta},[\ast ]_{\delta })<\frac{\varepsilon }{2}<\delta $.
But $\phi _{\delta }$ is injective on $B(\ast ,\delta )$ and since $\lambda $
is a loop, $[\lambda]_{\delta }=[\ast ]_{\delta }$.

Let $[\lambda ]_{\varepsilon }\in \pi _{\varepsilon }(X)$. By (\ref{ifthen3}%
) there is $[\alpha ]_{\delta }$ such that $d\bigl(\left(f_{i}\right)
_{\#}([\alpha ]_{\delta }),[\lambda ]_{\varepsilon }\bigr)<\frac{\varepsilon 
}{6}$. Letting $\alpha :=\{\ast =x_{0},\dots,x_{n}\}$ we have 
\begin{equation*}
d(x_{n},\ast )\leq d(f(x_{n}),\ast )+\sigma _{i}\leq d(\left(
f_{i}\right)_{\#}([\alpha ]_{\delta }),[\lambda ]_{\varepsilon })+\sigma
_{i}<\frac{\varepsilon }{3}<\delta \text{.}
\end{equation*}
Then $\lambda ^{\prime }:=\{\ast =x_{0},\dots,x_{n},\ast \}$ is a $\delta $%
-loop with $d([\lambda ^{\prime }]_{\delta },[\alpha ]_{\delta })\leq
d(x_{n},\ast )<\frac{\varepsilon }{3}$. Therefore by (\ref{ifthen2}) $d\bigl(%
\left( f_{i}\right) _{\#}([\alpha ]_{\delta }),\left(
f_{i}\right)_{\#}([\lambda ^{\prime }]_{\delta })\bigr)<\frac{\varepsilon }{2%
}$. The triangle inequality now shows $d\bigl(\lbrack \lambda
]_{\varepsilon},\left( f_{i}\right) _{\#}([\lambda ^{\prime }]_{\delta })%
\bigr)<\frac{5\varepsilon }{6}<\varepsilon $ . But once again, the
injectivity of $\phi_{\varepsilon }$ on open $\varepsilon $-balls implies
that $[\lambda]_{\varepsilon }=\left( f_{i}\right) _{\#}([\lambda ^{\prime
}]_{\delta})=(f_{i})_{\theta }([\lambda ^{\prime }]_{\delta })$, finishing
the proof of surjectivity.
\end{proof}

\begin{proposition}
\label{equiclose}Suppose that $X$ and $Y$ are metric spaces with groups $H,K$
acting on $X,Y$, respectively, by isometries with closed orbits and $%
\phi:H\rightarrow K$ is an epimorphism. If $f:X\rightarrow Y$ is a $\sigma $%
-isometry for some first degree polynomial $\sigma $ and equivariant with
respect to $\phi $, then $f_{\pi }:X/H\rightarrow Y/K$ is a $\sigma $%
-isometry.
\end{proposition}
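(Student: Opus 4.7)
The plan is to verify directly the two conditions of Definition \ref{scaled} for the induced map $f_\pi$. Let $D := d(Hx, Hy) = \inf_{h \in H} d(x, h(y))$. The distortion inequality $|D - d(f_\pi(Hx), f_\pi(Hy))| \leq \sigma(D)$ will be proved by separate upper and lower bounds.

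For the upper bound $d(Kf(x), Kf(y)) \leq D + \sigma(D)$, given $\tau > 0$ I would choose $h \in H$ with $d(x, h(y)) < D + \tau$. By the distortion property of $f$, $d(f(x), f(h(y))) \leq d(x, h(y)) + \sigma(d(x, h(y)))$. Equivariance gives $f(h(y)) = \phi(h)(f(y))$, and since $\phi(h) \in K$ we have $d(Kf(x), Kf(y)) \leq d(f(x), \phi(h)(f(y))) = d(f(x), f(h(y)))$. Letting $\tau \to 0$ and using that $\sigma$ is nondecreasing yields the bound. Here we only need $\phi$ to map into $K$, not surjectivity.

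For the lower bound $d(Kf(x), Kf(y)) \geq D - \sigma(D)$, this is where surjectivity of $\phi$ plays its key role. Given $\tau > 0$, pick $k \in K$ with $d(f(x), k(f(y))) < d(Kf(x), Kf(y)) + \tau$, and lift to $h \in H$ with $\phi(h) = k$. By equivariance, $k(f(y)) = f(h(y))$, so $d(f(x), f(h(y))) < d(Kf(x), Kf(y)) + \tau$. The distortion condition rearranges to $d(x, h(y)) - \sigma(d(x, h(y))) \leq d(f(x), f(h(y)))$; writing $\sigma(t) = mt + b$ and using that $t \mapsto (1-m)t - b$ is nondecreasing together with $d(x, h(y)) \geq D$, I obtain $D - \sigma(D) \leq d(f(x), f(h(y)))$. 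Letting $\tau \to 0$ completes the bound.

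Finally, for the density condition, given $Kz \in Y/K$ I would apply the density property of $f$ to obtain $w \in X$ with $d(z, f(w)) \leq \sigma(0)$. Then $d(Kz, f_\pi(Hw)) = d(Kz, Kf(w)) \leq d(z, f(w)) \leq \sigma(0)$ as required. The main obstacle is the lower bound, where one must both invoke the surjectivity of $\phi$ to trade a coset representative $k \in K$ for a group element $h \in H$ and also handle the monotonicity of $t - \sigma(t)$ cleanly; every other step is a routine consequence of the definition of the quotient pseudometric.
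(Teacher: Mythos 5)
Your proposal is correct and follows essentially the same route as the paper: both arguments use surjectivity of $\phi$ together with equivariance to identify $\inf_{k\in K} d(f(x),k(f(y)))$ with $\inf_{g\in H} d(f(x),f(g(y)))$, then apply the distortion inequality of $f$ and pass to infima (your split into separate upper and lower bounds with explicit near-minimizers is just a more careful write-up of the same computation). The density argument is identical, and your observation that $t\mapsto t-\sigma(t)$ must be handled via monotonicity is the same implicit step the paper takes.
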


%%%

\begin{proof}
For any $x,y\in X$, we have $D := d(f_{\pi }(Hx),f_{\pi}(Hy))=d(Kf(x),Kf(y)) 
$, from which we obtain 
\begin{align*}
D &=\inf \{d(f(x),h(f(y))):h\in K\} \\
&=\inf \{d(f(x),\phi (g)(f(y))):g\in H\} \\
&=\inf \{d(f(x),f(g(y))):g\in H\}
\end{align*}
(The first equality follows because $\phi $ is surjective.) Now for any $%
g\in H$, 
\begin{equation*}
d(x,g(y))-\sigma (d(x,g(y))\leq d(f(x),f(g(y)))\leq d(x,g(y))+\sigma
(d(x,g(y)).
\end{equation*}
Letting $D^{\prime }:=d(Hx,Hy)=\inf \left\{ d(x,g(y))\right\} $, the infimum
of the right side is $D^{\prime }+\sigma (D^{\prime })$. For the left side,
for arbitrary $\varepsilon >0$ we may suppose that 
\begin{equation*}
D^{\prime }\leq d(x,g(y))<D^{\prime }+\varepsilon
\end{equation*}
which gives us 
\begin{equation*}
d(x,g(y))-\sigma (d(x,g(y))>D^{\prime }-\sigma (D^{\prime }+\varepsilon)
\end{equation*}
and therefore the infimum of the left side is $D^{\prime }-\sigma(D)^{\prime
}$. That is, the distortion of $f_{\pi }$ is at most $\sigma $.

Finally, for any $Ky\in Y/K$, there is some $f(x)=z\in Y$ such that $%
d(z,y)\leq \sigma (0)$. But then by definition $Kz=f(Kx)$ and $d(Kz,Ky)\leq
d(z,y)\leq \sigma (0)$.\bigskip
\end{proof}

Next, we prove a special instance of Theorem \ref{closed} in the case where $%
\varepsilon $ is not a critical value of the limit space. This result can be
extracted from a combination of results established by Sormani and Wei in 
\cite{SW1}. We include it here as a proposition in part because it has not
yet been stated elsewhere in this specific form, and also because we provide
an alternative proof using our discrete methods.

\begin{proposition}
\label{GHC}Suppose that $X_{i}\rightarrow X$, where each $X_{i}$ is compact
geodesic and let $\varepsilon >0$. Then for any $\delta <\varepsilon $
sufficiently close to $\varepsilon $, $(X_{i})_{\delta }\rightarrow
(X)_{\varepsilon }$ and $\pi _{\delta }(X_{i})$ is isomorphic to $%
\pi_{\varepsilon }(X)$ for all large $i$. In particular, if $\varepsilon $
is not a homotopy critical value of $X$ then $(X_{i})_{\varepsilon
}\rightarrow X_{\varepsilon }$ and $\pi _{\varepsilon }(X_{i})$ is
eventually isomorphic to $\pi _{\varepsilon }(X)$.
\end{proposition}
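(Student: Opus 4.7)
The plan is to apply Theorem \ref{almstdone} and Proposition \ref{ghpoly} essentially directly, choosing the threshold $\delta$ so that the injectivity hypothesis of Theorem \ref{almstdone} is satisfied on the limit space $X$, and then handling the case when $\varepsilon$ itself is not critical by backing off to a slightly smaller $\delta$.

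First, I would use the discreteness of the homotopy critical spectrum of $X$ (recalled at the end of Section 2) to choose $\omega_0\in(\varepsilon/2,\varepsilon)$ such that $X$ has no homotopy critical values in $[\omega_0,\varepsilon)$. By Lemma \ref{nocrit}, this guarantees that $\phi_{\varepsilon\omega_0}:X_{\omega_0}\rightarrow X_{\varepsilon}$ is injective, which is precisely the hypothesis needed to invoke Theorem \ref{almstdone}. The assumed compact Gromov-Hausdorff convergence $X_i\rightarrow X$ provides, by standard GH theory, basepoint-preserving (constant) $\sigma_i$-isometries $f_i:X_i\rightarrow X$ with $\sigma_i\rightarrow 0$; these are $\sigma_i$-isometries in the sense of Definition \ref{scaled}. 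Fix any $\delta\in(\omega_0,\varepsilon)$. Theorem \ref{almstdone} now yields, for all sufficiently large $i$, that $(f_i)_{\#}:(X_i)_{\delta}\rightarrow X_{\varepsilon}$ is a $p(\sigma_i,\varepsilon)$-isometry and that its restriction $(f_i)_{\theta}:\pi_{\delta}(X_i)\rightarrow\pi_{\varepsilon}(X)$ is an isomorphism.

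Since the coefficients of $p(\sigma_i,\varepsilon)$ tend to $0$ as $\sigma_i\rightarrow 0$, and since $(X_i)_{\delta}$ and $X_{\varepsilon}$ are proper geodesic spaces (being covers of compact geodesic spaces), Proposition \ref{ghpoly} applied to the basepoint-preserving maps $(f_i)_{\#}$ produces the pointed Gromov-Hausdorff convergence $(X_i)_{\delta}\rightarrow X_{\varepsilon}$. This already gives the first assertion of the proposition.

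For the ``in particular'' clause, assume $\varepsilon$ is not a homotopy critical value of $X$. Using discreteness again, I would this time choose $\omega_0$ so that $X$ has no critical values on the \emph{closed} interval $[\omega_0,\varepsilon]$; by Lemma \ref{nocrit} the map $\phi_{\varepsilon\delta}:X_{\delta}\rightarrow X_{\varepsilon}$ is then an isometry for every $\delta\in[\omega_0,\varepsilon]$. Hausdorff convergence of the homotopy critical spectra of the $X_i$ to that of $X$ (Theorem 8.4 of \cite{SW2}) implies that for all large $i$ the spectrum of $X_i$ also avoids some subinterval $[\omega_1,\varepsilon]$ with $\omega_0<\omega_1<\varepsilon$, so that $(X_i)_{\varepsilon}$ is isometric via $\phi_{\varepsilon\delta}$ to $(X_i)_{\delta}$ for any $\delta\in[\omega_1,\varepsilon)$. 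Combining these isometric identifications with the first part applied at such a $\delta$ gives $(X_i)_{\varepsilon}\rightarrow X_{\varepsilon}$ and $\pi_{\varepsilon}(X_i)\cong\pi_{\varepsilon}(X)$ for all large $i$.

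The only real obstacle is locating the correct $\omega_0$ to satisfy the injectivity hypothesis of Theorem \ref{almstdone}; once that is secured, the heavy lifting was already done in Theorems \ref{firstdeg} and \ref{almstdone} and in Proposition \ref{ghpoly}, and the Hausdorff convergence of critical spectra handles the passage back from $\delta$ to $\varepsilon$ in the non-critical case.
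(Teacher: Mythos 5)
Your proof of the first assertion is essentially the paper's: choose $\omega_0\in(\varepsilon/2,\varepsilon)$ with no critical values of $X$ in $[\omega_0,\varepsilon)$ so that Lemma \ref{nocrit} gives the injectivity hypothesis of Theorem \ref{almstdone}, then feed the resulting $p(\sigma_i,\varepsilon)$-isometries $(f_i)_{\#}$ and isomorphisms $(f_i)_{\theta}$ into Proposition \ref{ghpoly}. That part is correct and matches the paper. Where you diverge is the ``in particular'' clause. The paper handles it by going \emph{up}: since $\varepsilon$ is not critical, there is $\varepsilon'>\varepsilon$ with $\phi_{\varepsilon'\omega_0}$ an isometry, and one simply reapplies the first assertion with $\varepsilon'$ in the role of $\varepsilon$ and $\varepsilon$ in the role of $\delta$, obtaining $(X_i)_{\varepsilon}\rightarrow X_{\varepsilon'}$ and $\pi_{\varepsilon}(X_i)\cong\pi_{\varepsilon'}(X)$, then identifies $X_{\varepsilon'}$ with $X_{\varepsilon}$ via the isometry $\phi_{\varepsilon'\varepsilon}$. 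You instead stay at $\varepsilon$ and import Theorem 8.4 of \cite{SW2} (Hausdorff convergence of the critical spectra) to conclude that for large $i$ the spectrum of $X_i$ misses a closed interval $[\omega_1,\varepsilon]$, so that $\phi_{\varepsilon\delta}:(X_i)_{\delta}\rightarrow(X_i)_{\varepsilon}$ is an isometry by Lemma \ref{nocrit}, and then you invoke the first part at $\delta$. Both arguments are valid; the paper's is more self-contained (it never needs the spectral convergence theorem, only discreteness of the spectrum of the single limit space $X$), whereas yours requires controlling the spectra of all the $X_i$ simultaneously but makes the identification $(X_i)_{\varepsilon}\cong(X_i)_{\delta}$ explicit on the approximating spaces rather than on the limit.
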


\begin{proof}
Let $f_{i}:X_{i}\rightarrow X$ be basepoint-preserving $\sigma _{i}$%
-isometries with constants $\sigma _{i}\rightarrow 0$. Since the homotopy
critical values of $X$ are discrete, we may choose $\omega _{0}$, and hence $%
\delta $ with $\frac{\varepsilon }{2}<\delta <\varepsilon $, so that the
assumptions of Theorem \ref{almstdone} are satisfied. Eliminating finitely
many terms if needed, we obtain the following properties for all $i$ that we
will use now and below: (1) $\left( f_{i}\right) _{\#}:\left(
X_{i}\right)_{\delta }\rightarrow X_{\varepsilon }$ is a $p(\sigma
_{i},\varepsilon )$-isometry. (2) The restriction $(f_{i})_{\theta }$ of $%
\left( f_{i}\right)_{\#}$ to $\pi _{\delta }(X_{i})$ is an isomorphism onto $%
\pi _{\varepsilon}(X)$. (3) $\left( f_{i}\right) _{\#}$ is equivariant with
respect to $(f_{i})_{\theta }$. The first statement of Theorem 1 is an
immediate consequence of (1) and (2). If $\varepsilon $ is not a homotopy
critical value of $X$ then there are some $\varepsilon ^{\prime
}>\varepsilon >\omega_{0}$ such that $\phi _{\varepsilon ^{\prime }\omega
_{0}}$ is an isometry. We may now apply the first part of the theorem using $%
\varepsilon ^{\prime }$ to see that $(X_{i})_{\varepsilon }\rightarrow
(X)_{\varepsilon ^{\prime }}$ and $\pi _{\varepsilon }(X_{i})$ is eventually
isomorphic to $\pi_{\varepsilon ^{\prime }}(X)$. But since $\phi
_{\varepsilon ^{\prime}\varepsilon }:X_{\varepsilon }\rightarrow
X_{\varepsilon ^{\prime }}$ is an isometry, $\theta _{\varepsilon ^{\prime
}\varepsilon }:\pi _{\varepsilon}(X)\rightarrow \pi _{\varepsilon ^{\prime
}}(X)$ is an isomorphism. This completes the proof.
\end{proof}

\begin{proof}[Proof of Theorem \protect\ref{closed}]
For the proof of Theorem \ref{closed} we will continue with the same
notation and $\delta $ as chosen in the proof of Proposition \ref{GHC}. By
eliminating terms if needed we may assume that $\varepsilon _{i}>\delta $
for all $i$. By Proposition \ref{T} the covering space $X_{\varepsilon
_{i}}^{\mathcal{T}_{i}}$ is isometrically equivalent to $X_{\delta }^{%
\mathcal{S}_{i}}$, and $\pi_{\varepsilon _{i}}^{\mathcal{T}_{i}}(X)$ is
isomorphic to $\pi _{\delta }^{\mathcal{S}_{i}}(X)$, where $\mathcal{S}_{i}$
is obtained by adding to $\mathcal{T}_{i}$ one representative for each
essential $\tau $-circle with $\varepsilon _{i}>\tau \geq \varepsilon $.
Therefore we need only show that there is some collection $\mathcal{T}$ as
in the statement of the theorem, and a subsequence such that $%
(X_{i_{k}})_{\delta _{i_{k}}}^{\mathcal{S}_{i_{k}}}\rightarrow
X_{\varepsilon }^{\mathcal{T}}$ and $\pi _{\delta_{i_{k}}}^{\mathcal{S}%
_{i_{k}}}(X_{i_{k}})$ is eventually isomorphic to $\pi_{\varepsilon }^{%
\mathcal{T}}(X)$. Let $g_{i}:(X_{i})_{\delta }\rightarrow X_{\delta }$
denote $\phi _{\varepsilon \delta }^{-1}\circ (f_{i})_{\#}$, which is a $%
p(\sigma _{i},\varepsilon )$-isometry, and let $h_{i}$ be the restriction of 
$g_{i}$ to $\pi _{\delta }(X_{i})$. By Conditions (2) and (3) above, $h_{i}$
is an isomorphism onto $\pi _{\delta }(X)$ for all $i$, and the maps $g_{i}$
are invariant with respect to the isomorphisms $h_{i}$. According to Theorem
11 of \cite{PW}, the number of homotopy critical values $\geq \varepsilon $
(counted with multiplicity) in the Gromov-Hausdorff precompact collection $%
\{X_{i}\}$ has a uniform upper bound. Therefore by removing equivalent
essential triads and taking a subsequence if necessary, we may assume that
for some $n$, $\mathcal{S}_{i}=\{T_{i1},\dots ,T_{in}\}$ ($n$ could be $0$,
in which case the following statements about $\mathcal{T}_{i}$ are true for
the empty set). Suppose that $T_{ij}=\{x_{ij}^{0},x_{ij}^{1},x_{ij}^{2}\}$
is a $\delta _{ij}$-triad and $T_{ij}^{\prime }$ is an $\frac{\varepsilon }{3%
}$-refinement of $\alpha_{T_{ij}}$. Since the diameters of the spaces $X_{i}$
have a uniform upper bound, the number of points needed to refine each $%
\alpha _{T_{ij}}$ has a uniform upper bound; by adding points if necessary
we may assume that for some fixed $w$, $T_{ij}^{\prime
}=\{z_{ij}^{0}=x_{ij}^{0},\dots,z_{ij}^{w}=x_{ij}^{0}\}$ for all $i,j$. The
uniform upper bound on diameters also implies that for some fixed $m$ we may
find $\frac{\varepsilon }{3}$-chains $\alpha _{ij}:=\{\ast =y_{ij}^{0},\dots
,y_{ij}^{m}=x_{ij}^{0}\}$ for all $i,j$ (i.e. subdivide geodesics). Finally,
let $\lambda _{ij}:=\alpha _{ij}\ast T_{ij}^{\prime }\ast \overline{%
\alpha_{ij}}$.

By choosing a subsequence yet again we may suppose that for all $j,k$, $%
f_{i}(z_{ij}^{k})\rightarrow z_{j}^{k}$, $f_{i}(x_{ij}^{k})\rightarrow
x_{j}^{k}$ and $f_{i}(y_{ij}^{k})\rightarrow y_{j}^{k}$. Let $%
\alpha_{j}:=\{y_{j}^{0},\dots ,y_{j}^{m}\}$, $T_{j}:=%
\{x_{j}^{0},x_{j}^{1},x_{j}^{2}\}$, $T_{j}^{\prime
}:=\{z_{j}^{0},\dots,z_{j}^{w}\}$, and $\lambda _{j}:=\alpha _{j}\ast
T_{j}^{\prime }\ast \overline{\alpha _{j}}$. Since all the limiting chains
have the property that each point is of distance at most $\frac{\varepsilon 
}{3}$ from its successor, Proposition \ref{close} implies that there is some 
$N$ such that if $i\geq N$, then 
\begin{equation}
\lbrack f_{i}(\lambda _{ij})]_{\rho }=[\lambda _{j}]_{\rho }\text{ for all }%
j\ \text{and any }\rho \geq \frac{\varepsilon }{2}>\frac{\varepsilon }{3}%
\text{.\ }  \label{rho}
\end{equation}
We assume $i\geq N$ in what follows. One immediate consequence of (\ref{rho}%
) is that 
\begin{equation}
(f_{i})_{\theta }\bigl(\lbrack \lambda _{ij}]_{\delta }\bigr)=(f_{i})_{\#}%
\bigl(\lbrack \lambda _{ij}]_{\delta }\bigr)=[f(\lambda
_{ij})]_{\varepsilon}=[\lambda _{j}]_{\varepsilon }  \label{fker}
\end{equation}
where $\left( f_{i}\right) _{\theta }$ is the restriction of $(f_{i})_{\#}$
to $\pi _{\delta }(X_{i})$.

We can now argue that $T_{j}$ is an essential $\delta _{j}$-triad where $%
\delta _{j}:=\lim \delta _{ij}\geq \varepsilon $. In fact, by continuity of
the distance function, $\delta _{j}=\lim \delta _{ij}\geq \varepsilon $
exists and $T_{j}$ is a $\delta _{j}$-triad. If $T_{j}^{\prime }$ were $%
\varepsilon $-null then $f_{i}(\lambda _{ij})$ would be also $\varepsilon $%
-null, which by (\ref{fker}) means $[\lambda _{ij}]_{\delta }\in \ker $ $%
\left( f_{i}\right) _{\theta }$. Since $\left( f_{i}\right) _{\theta }$ is
an isomorphism, $\lambda _{ij}$, and hence $T_{ij}^{\prime }$, is $%
\varepsilon $-null. This contradicts that $T_{ij}$ is $\delta _{ij}$%
-essential with $\delta _{ij}>\varepsilon $. So $T_{j}^{\prime }$ is not $%
\varepsilon $-null, and $\delta _{j}\geq \varepsilon $ by Corollary \ref%
{three}. If $T_{j}$ were not essential then $T_{j}^{\prime }$ would be $%
\delta _{j}$-null. Then for some $\delta ^{\prime }<\delta _{j}$ and close
enough to $\delta _{j}$, $T^{\prime }$ would also be $\delta ^{\prime }$%
-null. But then for large enough $i$, $\delta _{ij}>\delta ^{\prime }$ and (%
\ref{rho}) implies $T_{ij}^{\prime }$ is $\delta ^{\prime }$-null, a
contradiction to the fact that $T_{ij}^{\prime }$ is not $\delta _{ij}$-null.

The next consequence of (\ref{fker}), and the characterization of $%
K_{\delta}(\mathcal{T})$ in Remark \ref{comments}, is that $h_{i}(K_{\delta
}(\mathcal{S}_{i}))=K_{\delta }(\mathcal{T)}$, where $\mathcal{T}%
:=\{T_{1},\dots ,T_{n}\}$. At this point we may assume the following, having
chosen subsequences several times (but avoiding double subscripts for
simplicity): the functions $g_{i}:(X_{i})_{\delta }\rightarrow X_{\delta }$
are $p(\sigma _{i},\varepsilon )$-isometries and the restrictions $k_{i}$ of 
$h_{i}$ to $K_{\delta }(\mathcal{S}_{i})$ are isometries onto $K_{\delta }(%
\mathcal{T})$ that are equivariant with respect to $g_{i}$. By Propositions %
\ref{ghpoly} and \ref{equiclose}, $(X_{i})_{\delta }^{\mathcal{S}%
_{i}}=\left( X_{i}\right) _{\delta }/K_{\delta }(\mathcal{S}_{i})\rightarrow
X_{\delta }/K_{\delta }(\mathcal{T)=}X_{\delta }^{\mathcal{T}}$. By the
choice of $\delta $, $\phi _{\varepsilon \delta }$ is an isometry, so $%
X_{\delta }/K_{\delta }(\mathcal{T)}$ is isometric to $X_{\varepsilon}/K_{%
\varepsilon }(\mathcal{T)=}X_{\varepsilon }^{\mathcal{T}}$ by Proposition %
\ref{T}.

Finally, recall that $h_{i}:\pi _{\delta }(X_{i})\rightarrow \pi _{\delta}(X)
$ is an isomorphism that takes $K_{\delta }(\mathcal{S}_{i})$ to $K_{\delta
}(\mathcal{T)}$. Combining this with the first part of Theorem \ref{circles}
gives us that $\pi _{\delta }^{\mathcal{S}_{i}}(X_{i})=\pi_{\delta
}(X_{i})/K_{\delta }(\mathcal{S}_{i})$ is isomorphic to $\pi_{\delta
}(X)/K_{\delta }(\mathcal{T})$. But $\theta _{\varepsilon \delta }$ is an
isomorphism from $\pi _{\delta }(X)$ to $\pi _{\varepsilon }(X)$ taking $%
K_{\delta }(\mathcal{T})$ to $K_{\varepsilon }(\mathcal{T})$, completing the
proof of the theorem.
\end{proof}

\section{Some Open Questions and Problems}

There are some questions that naturally arise from these results and might
make interesting motivations for future work. For example, is it possible to
characterize circle covers among all covers (this extends a question from 
\cite{PW} about characterizing $\varepsilon $-covers)? We know that not all
covers of a compact geodesic space are circle covers. For example, the
standard geodesic circle has only two circle covers: the trivial cover and
the universal cover; other non-equivalent covers like the double cover
cannot be circle covers. But at this point we are only able to identify when
a cover is a circle cover in the following ways: (1) by exclusion when we
know all the circle covers in a particular example, (2) if the cover is
explicitly defined as a circle cover, or (3) if it is known to be so by
Theorem \ref{closed}. In this connection we note that the natural analog of
Theorem \ref{closed} for covering maps in general is not true. In fact, for
a circle cover $\pi $ of a compact geodesic space $X$, a lower bound $%
\varepsilon >0$ on the size of the circles is equivalent to being covered by
the $\varepsilon $-cover of $X$. Now let $\psi_{k}:C_{k}\rightarrow C_{1}$
be the $k$-fold cover of the geodesic circle $C_{1}$ by the geodesic circle
of length $k$, which as we have mentioned is not a circle cover for $k>1$.
Each of these covers is covered by the universal covering space of $C_{1}$,
which is the $\frac{1}{3}$-cover of $C_{1}$. It is also true that $%
C_{k}\rightarrow \mathbb{R}$ in the pointed Gromov-Hausdorff sense (in fact
it is not hard to show in general precompactness of covering spaces covered
by an $\varepsilon $-cover). However, the deck groups of these covering maps
are $\mathbb{Z}_{k}$, which of course are all distinct.

Another question of interest is related to the fairly old question
concerning the degree to which various spectra (Laplacian, length, covering)
determine geometric properties in a compact geodesic space, including
whether they must be isometric. Note that, up to a multiplied constant, the
covering and homotopy critical spectra are contained in the length spectrum.
While this was already observed by Sormani-Wei in \cite{SW2}, this is an
immediate consequence of the previously mentioned fact that $X$ contains an
essential $\varepsilon $-circle if and only if $\varepsilon $ is a homotopy
critical value. The relationship between the length and the Laplace spectra
was first considered in \cite{BB2}, \cite{dV}, \cite{DG}. Already de Smit,
Gornet, and Sutton have shown that the covering spectrum is not a spectral
invariant (\cite{dGS}, \cite{DGS2}) by extending Sunada's method \cite{Su}
to determine when two manifolds have the same Laplace spectrum. However,
essential circles allow one to enhance the notion of covering/homotopy
critical spectrum in the following way. Given a compact geodesic space $X$,
each circle covering of $X$ corresponds to a subgroup of $\pi _{1}(X)$,
which we will call a \textit{circle group}. Specifically, a circle group is
the kernel of the natural map $\Lambda :\pi _{1}(X)\rightarrow \pi
_{\varepsilon }(X)$ mentioned in Section 2, composed with the quotient map
from $\pi _{\varepsilon }(X)$ to $\pi _{\varepsilon }(X)/K_{\varepsilon }(%
\mathcal{T)}$ described above. The collection of all circle groups,
partially ordered by inclusion, provides an algebraic refinement of the
homotopy critical spectrum which in principle should say more about how
similar two spaces are. That is, what can be said about compact geodesic
spaces that not only share the same homotopy critical spectra, including
multiplicity, but also share the same partially ordered collection of circle
groups up to isomorphism?

\end{document}